\documentclass[a4paper]{article}

\usepackage{amssymb}
\usepackage{amsmath}
\usepackage{mathtools}
\usepackage[amsmath,thmmarks,hyperref]{ntheorem}
\usepackage{enumerate}
\usepackage{color}
\usepackage{cite}

\usepackage{booktabs}

\usepackage{geometry}
\geometry{a4paper, top=3cm, left=3cm, right=3cm, bottom=3cm, footskip=1cm}

\usepackage[final,notref]{showkeys}%
\usepackage[bookmarksnumbered=true,hidelinks]{hyperref}
\usepackage{caption}

\usepackage{relsize}

\theoremnumbering{arabic}
\theoremstyle{break}
\RequirePackage{latexsym}
\theorembodyfont{\itshape}
\theoremsymbol{}
\theoremheaderfont{\normalfont\bfseries}
\theoremseparator{}
\newtheorem{theorem}{Theorem}[section]

\newtheorem{proposition}[theorem]{Proposition}
\newtheorem{corollary}[theorem]{Corollary}

\theorembodyfont{\upshape}
\theoremsymbol{\ensuremath{\clubsuit}}
\newtheorem{remark}[theorem]{Remark}

\theorembodyfont{\upshape}
\theoremsymbol{}
\newtheorem{example}[theorem]{Example}

\theoremstyle{nonumberplain}
\theoremheaderfont{\normalfont\bfseries}
\theoremseparator{:}
\theorembodyfont{\normalfont}
\theoremsymbol{\ensuremath{\square}}
\RequirePackage{amssymb}

        \newtheorem{proof}{Proof}
\qedsymbol{\ensuremath{\bigsquare}}

\numberwithin{equation}{section}

\newcommand{\RR}{\mathbb{R}}
\newcommand{\NN}{\mathbb{N}}
\newcommand{\ZZ}{\mathbb{Z}}

\renewcommand{\d}{\mathrm{d}}

\newcommand{\vtd}[2]{{\mathbf{VTD}_{#2}^{#1}}}
\newcommand{\Q}[2]{{Q_{#2}^{#1}}}
\newcommand{\Qhat}[2]{{\widehat{Q}_{#2}^{#1}}}
\newcommand{\I}[2]{{\mathcal{I}_{#2}^{#1}}}
\newcommand{\Ihat}[2]{{\widehat{\mathcal{I}}_{#2}^{#1}}}
\newcommand{\Y}[1]{Y_{#1}}

\newcommand{\Pint}{\mathcal{P}}
\newcommand{\Qun}{Q_n}
\newcommand{\Id}{\mathrm{Id}}
\newcommand{\If}{\mathcal{I}}

\usepackage{mathrsfs}
\newcommand{\II}{
\mathchoice
{
\raisebox{-0.6mm}{$\displaystyle \mathlarger{\mathlarger{{\mathscr{I}}}}_{\!\!n}$}
}
{
\raisebox{-0.3mm}{$\mathlarger{\mathlarger{{\mathscr{I}}}}_{\!\!n}$}
}
{
\mathscr{I}_n
}
{
\mathscr{I}_n
}
}
\newcommand{\IIn}[1]{\II\!\left[#1\right]
}

\title{Variational Time Discretizations of Higher Order and Higher Regularity}
\author{Simon Becher\footnote{Institute of Numerical Mathematics, 
Technical University of Dresden, 01062 Dresden, Germany.
\mbox{e-mail: Simon.Becher@tu-dresden.de, Gunar.Matthies@tu-dresden.de}}
\and Gunar Matthies\footnotemark[1]}
\date{\today}

\begin{document}

\maketitle

\begin{abstract}
We consider a family of variational time discretizations that are
generalizations of discontinuous Galerkin (dG) and continuous
Galerkin--Petrov (cGP) methods. The family is characterized by two
parameters. One describes the polynomial ansatz order while the other one
is associated with the global smoothness that is ensured by higher order
collocation conditions at both ends of the subintervals. The
presented methods provide the same stability properties as dG or cGP.
Provided that suitable quadrature rules of Hermite type for evaluating the
integrals in the variational conditions are used, the  variational time
discretization methods are connected to special collocation methods. For
this case, we will present error estimates, numerical experiments, and
a computationally cheap postprocessing that allows to increase both the
accuracy and the global smoothness by one order.
\end{abstract}

\noindent \textit{AMS subject classification (2010):} 65L05, 65L20, 65L60

\noindent \textit{Key words:}
Discontinuous Galerkin, Continuous Galerkin--Petrov, Stability,
Collocation Method, Postprocessing, Superconvergence

\section{Introduction}
One way to solve parabolic partial differential equations starts with a
semi-discretization in space to obtain a huge system of ordinary
differential equations that is handled by a suitable temporal
discretization. If the spatial discretization gets finer, the system of
ordinary differential equations becomes stiffer. Hence, implicit methods
are preferable in order to exclude upper bounds for the time step length.
Moreover, the used time discretization should be at least A-stable to
ensure suitable stability properties. This means that the order of BDF
methods can be at most two. In order to have A-stable temporal
discretizations of higher order, implicit Runge--Kutta methods,
discontinuous Galerkin (dG), or continuous Galerkin--Petrov (cGP) schemes
could be applied.

This paper deals with a family of variational time discretizations that
generalizes dG and cGP methods. In addition to variational equations,
collocation conditions will be used. The considered family is characterized
by two parameters: the first one is the local polynomial ansatz order and
the second parameter is related to the global smoothness of the numerical
solution that is ensured by higher order collocation conditions at both
ends of the subintervals. 
With respect to their stability behavior, the family of variational
time discretizations can be divided into two groups. While the first group
shares its stability properties with the cGP method, the second group behaves
like the dG method. These observations, considered in~\cite{BMW19}, suggest
that the whole family of methods is appropriate to handle stiff problems.
Since the test space for each family member is
allowed to be discontinuous with respect to the time mesh, the discrete
problem can be solved in a time-marching process, i.e., by a sequence of
local problems on the subintervals.

Our studies can be build on a broad knowledge base on various
aspects of dG and cGP time discretization methods in the literature.
The a priori and a posteriori analysis of dG methods is well understood,
see~\cite{Tho06, EJ96}. Moreover, dG methods are known to be strongly
A-stable. Investigations of cGP($r$)-method for $r=1$ and systems of linear
ordinary differential equations can be found in~\cite[Sect.~9.3]{EJ96}.
Optimal error estimates and superconvergence results for the fully
discretized heat equation, based on a finite element method in space and a
cGP($r$) method in time, are given in~\cite{AM89}. The cGP($r$) method was
analyzed in~\cite{Sch10} for the affine linear case in an abstract Hilbert-space
setting and a general nonlinear system of ordinary differential equations
in the $d$-dimensional Euclidean space. Using energy arguments, the
A-stability of cGP($r$) methods was shown. In addition, cGP($r$) provides
an energy decreasing property for the gradient flow equation of an energy
functional, see~\cite{Sch10}.

Postprocessing techniques for dG and cGP methods applied to systems of
ordinary differential equations have been given in~\cite{MS11}. They allow
to obtain an improved convergence by one order in integral-based norms.
Furthermore, the postprocessed dG-solution will be continuous while the
postprocessing of cGP-solutions leads to continuously differentiable
trajectories.
This paper transfers and generalizes the postprocessing ideas to the whole
family of variational time discretizations. The postprocessing creates
an improved solution where the global smoothness is increased by one
differentiation order. Moreover, the postprocessing lifts the originally
obtained numerical solution on each time subinterval to the polynomial
space with one degree higher. This results in an increased accuracy in
integral-based norms,
see~\cite{MS11,ABM17,ES16,HST12,HST13,BKRS19,AM15,AM16}.
Please note that the postprocessing comes with almost no computational
costs since just jumps of derivatives of the discrete solution are needed.
Beside the improvements of accuracy and global smoothness, the
postprocessing can be used to drive an efficient adaptive time step
control, see~\cite{AJ15}.

As mentioned above, the variational time discretizations analyzed in this
paper use collocation conditions at the end points of the time
subintervals. We will show connections between pure collocation methods and
numerically integrated variational time discretizations provided a suitable
quadrature rule of Hermite type is applied. Based on these connections the
existence and uniqueness of discrete solutions will be shown. Moreover,
optimal error estimates follow. The connection between collocation methods
and postprocessed numerically integrated discontinuous Galerkin methods
using the right-sided Gauss--Radau quadrature formulas was considered
in~\cite{VR15}. Moreover, connections between collocation methods and the
numerically integrated continuous Galerkin--Petrov methods (using
interpolatory quadrature formulas with as many quadrature points as number
of independent variational conditions) are shown in~\cite{Hul72b,Hul72a}.

For affine linear systems of ordinary differential equations with
time-independent coefficients, an interpolation cascade is presented that
allows multiple postprocessing steps leading to very accurate solutions
with low computational costs. Moreover, temporal derivatives of discrete
solutions to affine linear systems with time-independent coefficients form
also solutions of variational time discretization schemes. This relation
was used to prove optimal error estimates for stabilized finite element
methods for linear first-order partial differential equations~\cite{ES16}
and for parabolic wave equations~\cite{BKRS19,ABBM19}.

The paper is organized as follows. Section~\ref{sec:methForm} provides some
notation and formulates the family of variational time discretizations. The
general postprocessing technique is considered in
Section~\ref{sec:postprocessing}. The connection between collocation methods
and numerically integrated variational time discretizations will be given
in Section~\ref{sec:collocation}. This connection is exploited to provide
results on the existence of unique solutions and to obtain error estimates.
We present in Section~\ref{sec:intCascade} for affine linear ODE systems an
interpolation cascade that allows multiple postprocessing steps. Properties
of derivatives of solutions to variational time discretization methods will
be discussed in Section~\ref{sec:nestedSol_Der}. Numerical experiments
supporting the theoretical results are presented in Section~\ref{sec:NumExp}.

\section{Notation and formulation of the methods}
\label{sec:methForm}

We consider the initial value problem
\begin{equation}
\label{initValueProb}
M u'(t) = F\big(t,u(t)\big),\qquad u(t_0) = u_0 \in \RR^d,
\end{equation}
where $M \in \RR^{d\times d}$ is a regular matrix and $F$, sufficiently smooth,
satisfies a Lipschitz-condition with respect to the second variable.
Furthermore, let $I=(t_0,t_0+T]$ be an arbitrary but fixed time interval with
positive length $T$. The value $u_0$ at $t=t_0$ will be called the initial
value in the following.

If the ODE system~\eqref{initValueProb} originates from a finite element
semi-discretization in space of a parabolic partial differential equation
then $M$ is the time-constant mass matrix. The explicit appearance of $M$
allows to see easily if for certain ideas some systems of linear equations
have to be solved and additional effort is necessary.

To describe the vector-valued case ($d>1$) in an easy
way, let $(\cdot,\cdot)$ be the standard inner product and $\|\cdot\|$ the
Euclidean norm on $\RR^d$, $d\in\NN$. Besides, let $e_j$ be the $j$th standard unit
vector in $\RR^d$, $1 \leq j \leq d$.

For an arbitrary interval $J$ and $q\in\NN$, the spaces of continuous and
$p$ times continuously differentiable $\RR^q$-valued functions on $J$ are
written as $C(J,\RR^q)$ and $C^p(J,\RR^q)$, respectively. Furthermore, the
space of square-integrable $\RR^q$-valued functions shall be denoted by
$L^2(J,\RR^q)$ or, for convenience, sometimes also by $C^{-1}(J,\RR^q)$.
For $s \in \ZZ$, $s \geq 0$, we write $P_s(J, \RR^q)$ for the space of
$\RR^q$-valued polynomials on $J$ of degree less than or equal to $s$. Further notation
will be introduced later at the beginning of the sections where it is
needed.

In order to describe the methods, we need some time mesh. Therefore,
the interval $I$ is decomposed by
\begin{gather*}
t_0 < t_1 < \dots < t_{N-1} < t_N = t_0+T
\end{gather*}
into $N$ disjoint subintervals $I_n := (t_{n-1},t_n]$, $n = 1,\ldots, N$.
Furthermore, we set
\begin{gather*}
\tau_n := t_n - t_{n-1}, \qquad
\tau := \max_{1\le n\le N} \tau_n.
\end{gather*}
For any piecewise continuous function $v$ we define by
\begin{gather*}
v(t_n^+) := \lim_{t\to t_n+0} v(t),\qquad
v(t_n^-) := \lim_{t\to t_n-0} v(t),\qquad
[v]_n := v(t_n^+) - v(t_n^-)
\end{gather*}
the one-sided limits and the jump of $v$ at $t_n$.

In this paper $C$ denotes a generic constant independent of the time mesh
parameter $\tau$.

\subsection{Local formulation}

Let $r, k \in \ZZ$ with $0 \leq k \leq r$. In order to numerically solve
the initial value problem~\eqref{initValueProb}, we shall introduce special
variational time discretization methods $\vtd{r}{k}$ with parameters $r$ and $k$.

Using a standard time-marching strategy, the discrete solution is successively
determined on $I_n$, $n=1,\ldots,N$, by local problems of the form \medskip

Find $U\in P_r(I_n,\RR^d)$ such that
\begin{subequations}
\label{eq:locProb}
\begin{align}
U(t_{n-1}^+)
& = U(t_{n-1}^-),
&& \text{if } k \geq 1, 
\label{eq:locProbI} \\
M U^{(i+1)}(t_n^-)
& = \frac{\d^i}{\d t^i} \Big(F\big(t,U(t)\big)\Big)\Big|_{t=t_n^-},
&& \text{if } k \geq 2, \, i = 0, \ldots, \left\lfloor\tfrac{k}{2}\right\rfloor - 1,
\label{eq:locProbE} \\
M U^{(i+1)}(t_{n-1}^+)
& = \frac{\d^i}{\d t^i} \Big(F\big(t,U(t)\big)\Big)\Big|_{t=t_{n-1}^+},
&& \text{if } k \geq 3, \, i = 0, \ldots, \left\lfloor\tfrac{k-1}{2}\right\rfloor - 1,
\label{eq:locProbA}
\end{align}
and
\begin{multline}
\label{eq:locProbVar}
\IIn{\big( M U',\varphi\big)}
+ \delta_{0,k} \big(M \big[U\big]_{n-1}, \varphi(t_{n-1}^+)\big)
= \IIn{\big( F(\cdot,U(\cdot)), \varphi\big)}
\quad \forall \varphi\in P_{r-k}(I_n,\RR^d)
\end{multline}
\end{subequations}
where $U(t_0^-) = u_0$ and $\delta_{i,j}$ denotes the Kronecker Delta. Moreover, the integrator $\II$
represents either the integral over $I_n$ or the application of a quadrature formula for
approximate integration. Details will be described later on.

Note that the formulation can be easily extended to the case $k = r+1$.
Then the variational condition~\eqref{eq:locProbVar} needs to hold formally
for all $\varphi \in P_{-1}(I_n,\RR^d)$ that should be interpreted as
``there is no variational condition''. Hence, only conditions at both ends
of the interval $I_n$ are used.

The method $\vtd{r}{k}$ can be shortly described by
\begin{alignat*}{4}
& \text{trial space: } && P_r, &\qquad& \text{if } k \geq 1:  \text{initial condition}, &&\\
& \text{test space: }  && P_{r-k}, && \text{if } k \geq 2: \text{$\mathrm{ODE}^{(i)}$ in $t_n^-$}, &i & = 0, \ldots,
\left\lfloor \tfrac{k}{2}\right\rfloor - 1,\\
&  && && \text{if } k \geq 3: \text{$\mathrm{ODE}^{(i)}$ in $t_{n-1}^+$}, & i & = 0, \ldots, \left\lfloor
\tfrac{k-1}{2}\right\rfloor - 1.
\end{alignat*}
The notation $\mathrm{ODE}^{(i)}$ means that the discrete solution
fulfills the $i$th derivative of the system of ordinary differential
equations. Counting the number of conditions leads for $k\ge 1$ to
\begin{gather*}
\dim P_{r-k} + 1 + \left\lfloor \tfrac{k}{2} \right\rfloor + \left\lfloor \tfrac{k-1}{2} \right\rfloor
= r-k + 1 + 1 + \tfrac{k}{2} + \tfrac{k-1}{2} - \tfrac{1}{2}
= r-k+2+k-1 = r+1
\end{gather*}
while we have $\dim P_{r} = r+1$ conditions if $k = 0$. The number of
degrees of freedom equals for all $k$ to $\dim P_r = r+1$. Hence, the
number of conditions coincides for all cases with the number of degrees of
freedom.

\begin{remark}
\label{rem:dGcGP}
The $\vtd{r}{k}$ framework generalizes two well-known types of variational time
discretization methods. The method $\vtd{r}{0}$ is the discontinuous Galerkin method
$\mathrm{dG}(r)$ whereas the method $\vtd{r}{1}$ equates to the continuous
Galerkin--Petrov method $\mathrm{cGP}(r)$.

On closer considerations we see that methods $\vtd{r}{k}$ with even $k$
are $\mathrm{dG}$-like since there are point conditions on the 
$\left\lfloor \frac{k}{2} \right\rfloor$th derivative of the discrete
solution but this derivative might be discontinuous. The methods $\vtd{r}{k}$
with odd $k$ are $\mathrm{cGP}$-like since there are point conditions up to
the $\left\lfloor \frac{k}{2} \right\rfloor$th derivative of the discrete
solution and this derivative is continuous. We have in detail
\begin{gather*}
\vtd{r}{k} \mathrel{\widehat{=}}
\begin{cases}
\mathrm{dG}(r),
& k=0, \\
\mathrm{cGP}(r),
& k = 1, \\
\mathrm{dG}\text{-}C^{\left\lfloor \frac{k-1}{2} \right\rfloor}(r),
& k \geq 2, \, k \text{ even}, \\
\mathrm{cGP}\text{-}C^{\left\lfloor \frac{k-1}{2} \right\rfloor}(r),
& k \geq 3, \, k \text{ odd},
\end{cases}
\end{gather*}
where we used and generalized the definitions and notation of~\cite{MS11}.
Note that there is also another reason for naming the methods like this. All methods with odd $k$
share their A-stability with the cGP method while methods with even $k$ are
strongly A-stable as the dG method.
\end{remark}

In order to obtain a fully computable discrete problem usually a quadrature formula
$\Qun$ is chosen as integrator, i.e., $\II = \Qun$.
To indicate this choice, we simply write $\Qun$-$\vtd{r}{k}$. Moreover, we agree
that integration over $I_n$ is used if no quadrature rule is specified. We shall
mostly use quadrature rules that are exact for polynomials of degree up to $2r-k$.
This ensures in the case of an affine linear right-hand side $F(t,u) = f(t)-Au$ with
time-independent $A$ that all $u$ depending terms in~\eqref{eq:locProbVar} are integrated
exactly.

The special structure of the method~\eqref{eq:locProb} motivates to use an assigned
interpolation operator that conserves derivatives at the end points of the interval
up to a certain order. In detail, we define on the reference interval $[-1,1]$
the interpolation operator
$\Ihat{r}{k} : C^{\left\lfloor \frac{k}{2} \right\rfloor}([-1,1]) \to P_r([-1,1])$
that uses the interpolation points
\begin{equation}
\label{quadrule:Qpp}
\begin{alignedat}{2}
& \text{at the left end: }
&& \text{derivatives up to order $\left\lfloor\tfrac{k-1}{2}\right\rfloor$ in $-1^+$,} \\
& \text{at the right end: }
&& \text{derivatives up to order $\left\lfloor\tfrac{k}{2}\right\rfloor$ in $1^-$,} \\
& \text{in the interior: }
&& \text{zeros $\hat{t}_i \in (-1,1)$ of the $(r-k)$th Jacobi-polynomial} \\
&&& \text{with respect to the weight
$(1+\hat{t})^{\left\lfloor \frac{k-1}{2}\right\rfloor +1} (1-\hat{t})^{\left\lfloor \frac{k}{2}\right\rfloor +1}$}.
\end{alignedat}
\end{equation}
Note that there is not even a single point value at the left end for $k=0$. Thinking of
multiple counting in the case of derivatives, a total number of
\begin{gather*}
r-k + \left\lfloor \tfrac{k}{2} \right\rfloor + 1 + \left\lfloor \tfrac{k-1}{2} \right\rfloor + 1
= r - k + k -1 + 2 = r + 1
\end{gather*}
interpolation points is obtained. Hence, the number of conditions coincides with the
dimension of $P_r$. The interpolation operator $\Ihat{r}{k}$ is of Hermite-type and
provides the standard error estimates for Hermite interpolation.

In addition, we define by
\begin{equation*}
\Qhat{r}{k}[\hat{f}]:=\int_{-1}^1 (\Ihat{r}{k}\hat{f})(\hat{t})\,\d\hat{t}
\end{equation*}
a quadrature rule on $[-1,1]$ that is in a natural way assigned to the method
$\vtd{r}{k}$. The quadrature rules $\Qhat{r}{k}$ are known in the literature as
generalized Gauss--Radau or Gauss--Lobatto formulas, respectively, see
e.g.~\cite{Gau04,JB09}. The weights of the quadrature rule $\Qhat{r}{k}$ could
be calculated by integrating the appropriate Hermite basis functions on $[-1,1]$.
Finally, we obtain
\begin{gather*}
\int_{-1}^{1} \widehat{\varphi}(\hat{t}) \,\d \hat{t}
\approx \Qhat{r}{k}\big[\widehat{\varphi}\big]
= \int_{-1}^{1} (\Ihat{r}{k}\widehat{\varphi})(\hat{t}) \,\d \hat{t}
= \sum_{i=0}^{\left\lfloor \frac{k-1}{2}\right\rfloor} w_i^L \widehat{\varphi}^{(i)}(-1^+)
+ \sum_{i=1}^{r-k} w_i^I \widehat{\varphi}(\hat{t}_{i})
+ \sum_{i=0}^{\left\lfloor \frac{k}{2}\right\rfloor} w_i^R \widehat{\varphi}^{(i)}(+1^-).
\end{gather*}
The quadrature rule $\Qhat{r}{k}$ is exact for polynomials up to degree $2r-k$. It can
be shown that all quadrature weights are different from zero, see~\cite{JB09}.
In detail, we have
\begin{gather*}
w_j^I > 0, \qquad w_j^L > 0, \qquad (-1)^j w_j^R > 0,
\end{gather*}
so even the sign of the weights is known. Note that in general
(for $k \geq 2$) not all weights are positive. Semi-explicit or recursive
formulas for the weights of these methods can be found in~\cite{Pet17}.

Transferring the quadrature rule $\Qhat{r}{k}$ and the interpolation operator $\Ihat{r}{k}$
from $[-1,1]$ to the interval $I_n$, we obtain $\Q{r}{k}$ and $\I{r}{k}$ where we usually skip
to indicate the interval $I_n$ since this will be clear from context. Hence, we have
\begin{gather*}
\int_{I_n} \varphi(t) \,\d t
\approx \Q{r}{k}\big[\varphi\big]
= \frac{\tau_n}{2} \Bigg[
\sum_{i=0}^{\left\lfloor \frac{k-1}{2}\right\rfloor} w_i^L \left(\tfrac{\tau_n}{2}\right)^{\!i} \varphi^{(i)}(t_{n-1}^+)
+ \sum_{i=1}^{r-k} w_i^I \varphi(t_{n,i})
+ \sum_{i=0}^{\left\lfloor \frac{k}{2}\right\rfloor} w_i^R \left(\tfrac{\tau_n}{2}\right)^{\!i} \varphi^{(i)}(t_n^-)
\Bigg]
\end{gather*}
where $t_{n,i} = \frac{1}{2}\left(t_{n-1}+t_n+\tau_n \hat{t}_i\right) \in I_n$,
$i =1,\ldots,r-k$. The appearing factor
$\left(\tfrac{\tau_n}{2}\right)^{\!i}$ originates from the chain rule.

\begin{remark}
The quadrature rule $\Q{r}{0}$ is the well-known right-sided Gauss--Radau quadrature
formula with $r+1$ points that is typically used for the discontinuous Galerkin method $\mathrm{dG}(r)$.
$\Q{r}{1}$ is the Gauss--Lobatto quadrature rule with $r+1$ points that is often used together with
the continuous Galerkin--Petrov method $\mathrm{cGP}(r)$.
\end{remark}

\begin{remark}
For $1 \leq k \leq r$ the $\vtd{r}{k}$ method with exact integration could be
analyzed also in a generalization of the unified framework of~\cite{AMN11} as we
shall show below, see~\eqref{eq:locProbUnified}. Note that the dG-method ($k=0$)
has been already fitted in this framework there.

We define for $k\ge 1$ a projection operator
$\Pint_n : C^{\left\lfloor \frac{k}{2}
\right\rfloor-1}(\overline{I}_n,\RR^d) \to P_{r-1}(\overline{I}_n,\RR^d)$ by
\begin{subequations}
\label{eq:Pint}
\begin{align}
(\Pint_n v)^{(i)}(t_n^-)
& = v^{(i)}(t_n^-),
&& \text{if } k \geq 2, \, i = 0, \ldots, \left\lfloor\tfrac{k}{2}\right\rfloor - 1,
\label{eq:PintE} \\
(\Pint_n v)^{(i)}(t_{n-1}^+)
& = v^{(i)}(t_{n-1}^+),
&& \text{if } k \geq 3, \, i = 0, \ldots, \left\lfloor\tfrac{k-1}{2}\right\rfloor - 1,
\label{eq:PintA}
\end{align}
and
\begin{equation}
\label{eq:PintVar}
\int_{I_n} \big( \Pint_n v(t),\varphi(t)\big)\,\d t = 
\int_{I_n} \big( v(t), \varphi(t)\big)\,\d t
\qquad\forall \varphi\in P_{r-k}(I_n,\RR^d).
\end{equation}
\end{subequations}
Then an equivalent formulation of~\eqref{eq:locProb} with $1 \leq k \leq r$ 
and exact integration reads
\medskip

Find $U\in P_r(I_n,\RR^d)$ with given $U(t_{n-1})\in\RR^d$ such that
\begin{equation}
\label{eq:locProbUnified}
M U'(t) = \Pint_n F\big(t,U(t)\big) \qquad \forall t \in I_n
\end{equation}
where $U(t_0) = u_0$.

Indeed, if $U$ solves~\eqref{eq:locProb} then $U' \in P_{r-1}(I_n,\RR^d)$ obviously
satisfies all conditions of~\eqref{eq:Pint} with $v= F\big(\cdot,U(\cdot)\big)$.
Since $\Pint_n v$ is uniquely defined we directly get~\eqref{eq:locProbUnified}.

Otherwise let $U$ solve~\eqref{eq:locProbUnified}. Since
there are polynomials on both sides, we can differentiate the equation
by any order. With~\eqref{eq:PintE} and~\eqref{eq:PintA} we have
\begin{gather*}
M U^{(i+1)}(\tilde{t})
= \frac{\d^i}{\d t^i} \Big(\Pint_n F\big(t,U(t)\big)\Big) \Big|_{t=\tilde{t}}
= \frac{\d^i}{\d t^i} \Big( F\big(t,U(t)\big)\Big) \Big|_{t=\tilde{t}}
\end{gather*}
for $\tilde{t}=t_n^-$ and all $i = 0, \ldots, \left\lfloor \frac{k}{2}\right\rfloor -1$ if $k\geq 2$,
as well as for $\tilde{t}=t_{n-1}^+$ and all $i = 0, \ldots, \left\lfloor \frac{k-1}{2}\right\rfloor -1$
if $k\geq 3$, respectively. Hence, the
conditions~\eqref{eq:locProbE} and~\eqref{eq:locProbA} hold.
Taking the inner product of~\eqref{eq:locProbUnified} with an arbitrary $\varphi \in P_{r-k}(I_n,\RR^d)$
and integrating over $I_n$ yield together with~\eqref{eq:PintVar}
\begin{gather*}
\int_{I_n} \Big( M U'(t),\varphi(t)\Big)\,\d t
= \int_{I_n} \Big( \Pint_n F\big(t,U(t)\big), \varphi(t)\Big)\,\d t
= \int_{I_n} \Big( F\big(t,U(t)\big), \varphi(t)\Big)\,\d t
\end{gather*}
which is~\eqref{eq:locProbVar} with exact integration.

Note that certain numerically integrated versions of $\vtd{r}{k}$ could also be written in
the form~\eqref{eq:locProbUnified} if appropriate projections are applied.
\end{remark}

\subsection{Global formulation}

For $s \in \ZZ$, $s \geq 0$, we define the space $\Y{s}$ of vector-valued piecewise
polynomials of maximal degree $s$ by
\begin{gather*}
\Y{s} := \left\{ \varphi\in L^2(I,\RR^d)\::\: \varphi|_{I_n}
\in P_s(I_n, \RR^d),\, n=1,\dots, N\right\}.
\end{gather*}

Studying the conditions~\eqref{eq:locProbI}, \eqref{eq:locProbE}, and~\eqref{eq:locProbA}
we see that the solution $U$ of $\II$-$\vtd{r}{k}$ is $\left\lfloor \frac{k-1}{2} \right\rfloor$
times continuously differentiable on $I$ if $F$ is sufficiently smooth. Furthermore, the condition~\eqref{eq:locProbE}
for $U \in C^{\left\lfloor \frac{k-1}{2} \right\rfloor}(I)$ already implies~\eqref{eq:locProbA} for $n \geq 2$.
Consequently, the method could be reformulated as follows \medskip

Find $U \in \Y{r} \cap C^{\left\lfloor \frac{k-1}{2} \right\rfloor}(I,\RR^d)$ such that
\begin{subequations}
\label{def:globalDisc}
\begin{align}
U^{(i)}(t_0^+)
& = U^{(i)}(t_0^-),
&& \text{if } k \geq 1, \, i = 0,\ldots, \left\lfloor \tfrac{k-1}{2} \right\rfloor, 
\label{def:globalDiscI}\\
M U^{(i+1)}(t_n^-)
& = \frac{\d^i}{\d t^i} \Big(F\big(t,U(t)\big)\Big)\Big|_{t=t_n^-},
&& \text{if } k \geq 2, \, i = 0, \ldots, \left\lfloor\tfrac{k}{2}\right\rfloor - 1,
\label{def:globalDiscE}
\end{align}
for all $n = 1, \ldots, N$, and
\begin{equation}
\label{def:globalDiscVar}
\sum_{n=1}^N \left\{\IIn{\big(M U' - F(\cdot,U(\cdot)), \varphi\big)}
+ \delta_{0,k} \big(M \big[U\big]_{n-1}, \varphi(t_{n-1}^+)\big) \right\}
= 0 \qquad \forall \varphi \in \Y{r-k}
\end{equation}
\end{subequations}
where $U^{(i)}(t_0^-) = u^{(i)}(t_0)$, $0 \leq i \leq \left\lfloor \frac{k}{2} \right\rfloor$,
which includes the initial value $u_0$ in the problem formulation. We agree
on defining $u^{(j)}(t_0)$ recursively using the differential equation, i.e.,
\begin{equation}
\label{eq:IC}
\begin{alignedat}{2}
u^{(0)}(t_0) & := u_0,&\qquad
M u^{(2)}(t_0) & := \partial_t F\big(t_0, u(t_0)\big) + \partial_u F\big(t_0,u(t_0)\big) u^{(1)}(t_0), \\
M u^{(1)}(t_0) & := F\big(t_0,u(t_0)\big), &\qquad
M u^{(j)}(t_0) & := \frac{\d^{j-1}}{\d t^{j-1}} F\big(t,u(t)\big) \big|_{t=t_0}, \, j \geq 3.
\end{alignedat}
\end{equation}
The term $\frac{\d^{j-1}}{\d t^{j-1}} F\big(t,u(t)\big) \big|_{t=t_0}$ depends only
on $u(t_0), \ldots, u^{(j-1)}(t_0)$ and can be calculated using some generalization of
Fa\`{a} di Bruno's formula, see e.g.~\cite{EM03, M00}. If $F$ is affine linear in $u$, i.e.,
$F(t,u(t)) = f(t)-A(t)u(t)$, then we simply have
\begin{align*}
M u^{(j)}(t_0)
& := \frac{\d^{j-1}}{\d t^{j-1}} F\big(t,u(t)\big)\big|_{t=t_0}
   = f^{(j-1)}(t_0) - \sum_{l=0}^{j-1} \tbinom{j-1}{l} A^{(j-1-l)}(t_0) u^{(l)}(t_0), \quad j \geq 1,
\end{align*}
by Leibniz' rule for the $(j-1)$th derivative.

Note that since the test space $\Y{r-k}$ in~\eqref{def:globalDiscVar} allows
discontinuities at the boundaries of
subintervals, the problem can be decoupled by choosing test functions $\varphi$
supported on a single time interval $I_n$ only. Moreover, exploiting for
$k\ge 1$ the property $U \in C^{\left\lfloor \frac{k-1}{2} \right\rfloor}$
as well as~\eqref{def:globalDiscI} and~\eqref{def:globalDiscE}, we also
obtain~\eqref{eq:locProbI} and~\eqref{eq:locProbA}.
Therefore, the global problem~\eqref{def:globalDisc} can be converted back
into a sequence of local problems~\eqref{eq:locProb} in time on the different
subintervals $I_n$, $n=1,\dots, N$.

On each subinterval $I_n$, the local solution $U|_{I_n}$ belongs to
$P_r(I_n,\RR^d)$. Hence, it is completely described by $r+1$ vector
coefficients from $\RR^d$. However,
$\left\lfloor \frac{k-1}{2} \right\rfloor+1=
\left\lfloor \frac{k+1}{2} \right\rfloor$ of them are already fixed
by data inherited from the previous subinterval $I_{n-1}$ or the initial
conditions since $U \in C^{\left\lfloor \frac{k-1}{2} \right\rfloor}$. This
means that the size of the system to be solved on each subinterval is $r-
\left\lfloor \frac{k-1}{2} \right\rfloor$. The extreme case $k=r$ leads to
a system size of $\left\lfloor \frac{r}{2} \right\rfloor+1$ that is roughly
half of the size $r+1$ obtained for $k=0$.

\section{Postprocessing}
\label{sec:postprocessing}
We shall present a simple postprocessing in this section.

Recall~\eqref{quadrule:Qpp} for the definition of the quadrature points of the
quadrature rule $\Q{r}{k}$ which is exact for polynomials up to degree $2r-k$.

\begin{theorem}[Postprocessing $\Q{r}{k}$-$\vtd{r}{k}$ $\leadsto$ $\Q{r}{k}$-$\vtd{r+1}{k+2}$]
\label{th:postproc}
Let $r, k \in \ZZ$, $0 \leq k \leq r$, and suppose that $U \in \Y{r}$ solves
$\Q{r}{k}$-$\vtd{r}{k}$. For every $n = 1, \ldots, N$ set
\begin{gather*}
\widetilde{U} \big|_{I_n} = U\big|_{I_n} + a_n \vartheta_n,
\qquad \vartheta_n \in P_{r+1}(I_n,\RR),
\end{gather*}
where $\vartheta_n$ vanishes in all $(r+1)$ quadrature points of $\Q{r}{k}$
and additionally satisfies $\vartheta_n^{\left(\left\lfloor \frac{k}{2} \right\rfloor +1\right)}(t_n^-) = 1$
while the vector $a_n \in \RR^d$ is defined by
\begin{gather}\label{eq:an}
a_n = M^{-1} \left( \frac{\d^{\left\lfloor \frac{k}{2} \right\rfloor}}{\d t^{\left\lfloor \frac{k}{2} \right\rfloor}}
F\big(t,U(t)\big) \Big|_{t=t_n^-} - M U^{\left(\left\lfloor \frac{k}{2} \right\rfloor+1\right)}(t_n^-)\right)\!.
\end{gather}
Moreover, let $\widetilde{U}(t_0^-) = U(t_0^-)$. Then $\widetilde{U} \in \Y{r+1}$ solves
$\Q{r}{k}$-$\vtd{r+1}{k+2}$.
\end{theorem}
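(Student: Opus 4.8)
The plan is to verify directly that the postprocessed function $\widetilde U$ satisfies each of the defining conditions of $\Q{r}{k}$-$\vtd{r+1}{k+2}$, namely the continuity/initial conditions \eqref{def:globalDiscI} with $k$ replaced by $k+2$, the endpoint collocation conditions \eqref{def:globalDiscE} with $k$ replaced by $k+2$, and the variational condition \eqref{def:globalDiscVar} with trial degree $r+1$, test degree $r+1-(k+2) = r-k-1$, and integrator $\Q{r}{k}$. The key structural observation is that, since $k+2$ has the same parity as $k$, the new lower-order collocation conditions at the endpoints (derivatives up to order $\lfloor\frac{k+2}{2}\rfloor-1 = \lfloor\frac k2\rfloor$ at $t_n^-$, and up to $\lfloor\frac{k+1}{2}\rfloor-1 = \lfloor\frac{k-1}{2}\rfloor$ at $t_{n-1}^+$) are \emph{exactly the conditions $U$ already satisfies}, together with one genuinely new condition at each right endpoint, namely $M\widetilde U^{(\lfloor k/2\rfloor+1)}(t_n^-) = \frac{\d^{\lfloor k/2\rfloor}}{\d t^{\lfloor k/2\rfloor}}F(t,\widetilde U(t))|_{t=t_n^-}$. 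The bump function $\vartheta_n$ is tailored precisely so that $a_n\vartheta_n$ fixes this one missing condition without disturbing anything else.

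First I would record the properties of the correction term $a_n\vartheta_n$ on $I_n$: because $\vartheta_n$ vanishes at all $r+1$ quadrature points of $\Q{r}{k}$ (which, by the Hermite structure in \eqref{quadrule:Qpp}, includes vanishing of $\vartheta_n^{(i)}(t_n^-)$ for $i\le\lfloor k/2\rfloor$, of $\vartheta_n^{(i)}(t_{n-1}^+)$ for $i\le\lfloor\frac{k-1}{2}\rfloor$, and of $\vartheta_n$ at the interior Jacobi nodes $t_{n,i}$), the correction does not change any of the quantities appearing in those conditions; in particular it does not affect the existing collocation conditions of $U$, and it does not affect $U$ at the interior quadrature nodes. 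The only derivative of $\vartheta_n$ at $t_n^-$ that is nonzero at or below order $\lfloor k/2\rfloor+1$ is the normalized one $\vartheta_n^{(\lfloor k/2\rfloor+1)}(t_n^-)=1$. Hence $\widetilde U^{(\lfloor k/2\rfloor+1)}(t_n^-) = U^{(\lfloor k/2\rfloor+1)}(t_n^-) + a_n$, and plugging in the definition \eqref{eq:an} of $a_n$ gives $M\widetilde U^{(\lfloor k/2\rfloor+1)}(t_n^-) = \frac{\d^{\lfloor k/2\rfloor}}{\d t^{\lfloor k/2\rfloor}}F(t,U(t))|_{t=t_n^-}$. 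Here one must note that, since $\widetilde U$ and $U$ agree to order $\lfloor k/2\rfloor$ at $t_n^-$, also $\frac{\d^{\lfloor k/2\rfloor}}{\d t^{\lfloor k/2\rfloor}}F(t,\widetilde U(t))|_{t=t_n^-}=\frac{\d^{\lfloor k/2\rfloor}}{\d t^{\lfloor k/2\rfloor}}F(t,U(t))|_{t=t_n^-}$ by Fa\`a di Bruno, because that derivative depends only on $t$ and on $U^{(0)},\dots,U^{(\lfloor k/2\rfloor)}$ at the point. This establishes \eqref{def:globalDiscE} for $k+2$. For \eqref{def:globalDiscI}: the global continuity $\widetilde U\in C^{\lfloor\frac{k+1}{2}\rfloor}(I)$ follows because the new required smoothness order $\lfloor\frac{(k+2)-1}{2}\rfloor=\lfloor\frac{k+1}{2}\rfloor$ exceeds the old one by exactly one, and matching of the $\lfloor\frac{k+1}{2}\rfloor$th derivative across $t_n$ is the content of the collocation condition we just verified together with the one on $I_{n+1}$ at its left end --- which, as in the remark after \eqref{def:globalDisc}, is implied once $\widetilde U$ has that regularity and \eqref{def:globalDiscE} holds; the vanishing of $\vartheta_n$ and enough of its left-endpoint derivatives keeps $\widetilde U$ continuous-enough from the left, and $\widetilde U(t_0^-)=U(t_0^-)=u_0$ handles the initial data, with the higher initial derivatives $u^{(i)}(t_0)$ for $i\le\lfloor\frac{k+2}{2}\rfloor$ matched because $U$ already matches them for $i\le\lfloor k/2\rfloor$ and the new top one is forced by the collocation condition at $t_0^-$, reading off \eqref{eq:IC}.

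Next I would check the variational condition. We must show $\sum_n \Q{r}{k}[(M\widetilde U' - F(\cdot,\widetilde U(\cdot)),\varphi)] = 0$ for all $\varphi\in\Y{r-k-1}$ (no jump term, since $k+2\ge 2$). The strategy is: split $\widetilde U' = U' + a_n\vartheta_n'$ on $I_n$, and split $F(\cdot,\widetilde U(\cdot))$ accordingly. For the $U$-part, every $\varphi\in\Y{r-k-1}\subset\Y{r-k}$ is an admissible test function for the original method, but we need the equation for the \emph{original} $\Q{r}{k}$-$\vtd{r}{k}$, which involves $F(\cdot,U(\cdot))$, not $F(\cdot,\widetilde U(\cdot))$. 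So the real content is that $\Q{r}{k}[(M a_n\vartheta_n',\varphi)] = \Q{r}{k}[(F(\cdot,\widetilde U(\cdot))-F(\cdot,U(\cdot)),\varphi)]$ for each $n$ and each $\varphi\in P_{r-k-1}(I_n,\RR^d)$. For the left side: $(Ma_n\vartheta_n',\varphi)$ is a scalar polynomial of degree $\le (r+1-1)+(r-k-1) = 2r-k-1 \le 2r-k$, so $\Q{r}{k}$ integrates it exactly; then integration by parts gives $\int_{I_n}(Ma_n\vartheta_n',\varphi) = [(Ma_n\vartheta_n,\varphi)]_{t_{n-1}^+}^{t_n^-} - \int_{I_n}(Ma_n\vartheta_n,\varphi')$, and $\vartheta_n$ vanishes at both endpoints (they are quadrature points) and at all interior quadrature nodes, while $(Ma_n\vartheta_n,\varphi')$ has degree $\le (r+1)+(r-k-2) = 2r-k-1\le 2r-k$ so is also exactly integrated, and $\Q{r}{k}$ evaluates it to zero since $\vartheta_n$ vanishes at every quadrature point --- hence the left side is zero. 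For the right side: the key point is $\Q{r}{k}[(F(\cdot,\widetilde U(\cdot))-F(\cdot,U(\cdot)),\varphi)] = 0$ as well, because the quadrature rule $\Q{r}{k}$ only ever samples $F$ at the quadrature points (and derivatives of $F$ at the two endpoints up to the relevant orders), and at every quadrature node $\widetilde U = U$ (since $\vartheta_n$ vanishes there), while at the endpoints the derivatives of $\widetilde U$ that enter match those of $U$ up to the needed order --- so the two quadrature evaluations coincide termwise. Actually it is cleanest to argue this way: since $\widetilde U$ and $U$ agree at all quadrature points together with all derivatives that the rule uses, $\Q{r}{k}[(F(\cdot,\widetilde U(\cdot)),\varphi)] = \Q{r}{k}[(F(\cdot,U(\cdot)),\varphi)]$ for any $\varphi$; combining with the left side being zero and with the original variational equation (restricted to $\varphi$ supported on $I_n$, degree $\le r-k-1\le r-k$), we get the desired identity for $\widetilde U$.

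The main obstacle, and the step requiring the most care, is bookkeeping the Hermite structure of $\Q{r}{k}$: one must be sure that ``$\vartheta_n$ vanishes at all $r+1$ quadrature points'' really does force vanishing of all the endpoint derivatives that the rule uses (orders $0,\dots,\lfloor\frac{k-1}{2}\rfloor$ at the left, $0,\dots,\lfloor k/2\rfloor$ at the right), so that the correction term is invisible to the quadrature and to all existing collocation conditions, yet the single normalization $\vartheta_n^{(\lfloor k/2\rfloor+1)}(t_n^-)=1$ is consistent (degree count: $r+1$ vanishing conditions plus one derivative condition on a degree-$(r+1)$ polynomial, i.e.\ $r+2$ conditions, determines $\vartheta_n$ uniquely and it is not identically zero precisely because $\lfloor k/2\rfloor+1$ is strictly above the order of the right-endpoint Hermite data of $\Q{r}{k}$). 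A secondary point of care is the exactness degree $2r-k$ of $\Q{r}{k}$ versus the degrees of all the polynomial products that appear after the postprocessing lifts the degree to $r+1$: every such product must be checked to have degree at most $2r-k$, which it does precisely because the new test degree drops to $r-k-1$. Once these degree counts and the Hermite vanishing pattern are pinned down, the rest is the routine verification sketched above.
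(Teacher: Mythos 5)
Your overall strategy is the same as the paper's: verify directly that the correction $a_n\vartheta_n$ is invisible to the quadrature rule and to the conditions $U$ already satisfies, and that it repairs the top-order condition at $t_n^-$; your treatment of the right-endpoint conditions and of the variational condition for $k\ge 1$ is sound. However, there is a genuine gap at the left endpoint. Passing from $\vtd{r}{k}$ to $\vtd{r+1}{k+2}$ also adds a new point condition at $t_{n-1}^+$: the conditions \eqref{eq:locProbA} for $\vtd{r+1}{k+2}$ run up to $i=\left\lfloor\frac{k-1}{2}\right\rfloor$, whereas $U$ satisfies them only up to $i=\left\lfloor\frac{k-1}{2}\right\rfloor-1$; and since in general $\vartheta_n^{\left(\left\lfloor\frac{k-1}{2}\right\rfloor+1\right)}(t_{n-1}^+)\neq 0$, this new condition is neither inherited from $U$ nor left untouched by the correction, contrary to your claim that the only genuinely new conditions sit at the right endpoints. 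Your only argument for it is circular: you invoke the remark after \eqref{def:globalDisc} that the left-end conditions follow once $\widetilde U$ has the higher global regularity, but that higher regularity of $\widetilde U$ is exactly equivalent to the missing condition and is not known beforehand. The paper proves it from the already-established variational identity, testing with functions that vanish at all interior quadrature points and equal $e_j$ at $t_{n-1}^+$, using Leibniz' rule, the previously verified endpoint identities, and the nonvanishing of the weight $w^L_{\left\lfloor\frac{k-1}{2}\right\rfloor}$; some argument of this kind is indispensable.

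The case $k=0$ also breaks in your write-up. For $k=0$ the left endpoint $t_{n-1}^+$ is not a quadrature point of $\Q{r}{0}$ (right-sided Gauss--Radau), so your integration-by-parts step ``$\vartheta_n$ vanishes at both endpoints'' is false there: the boundary term $-(\vartheta_n\varphi)(t_{n-1}^+)\,Ma_n$ survives (cf.~\eqref{help:quadZeroPP}), and in addition the original dG equation carries the jump term $\delta_{0,k}\big(M[U]_{n-1},\varphi(t_{n-1}^+)\big)$, which you drop. These two contributions combine into $\big(M[\widetilde U]_{n-1},\varphi(t_{n-1}^+)\big)$, and one must then actually prove $[\widetilde U]_{n-1}=0$, which is the new continuity requirement of $\vtd{r+1}{2}$; the paper does this with special test functions vanishing at the $r$ interior quadrature nodes, combined with the already-proved collocation condition at $t_n^-$. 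Without these two endpoint arguments the proposal establishes only part of the claimed conditions, and precisely the parts that constitute the real work of the paper's proof are missing.
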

\begin{proof}
We have to verify that $\widetilde{U}$ satisfies all conditions for $\Q{r}{k}$-$\vtd{r+1}{k+2}$
where $\Q{r}{k}$ is the quadrature rule associated to $\vtd{r}{k}$ which is
exact for polynomials up to degree $2r-k$.

First of all we show an identity needed later. The special form of $\vartheta_n$,
the exactness of $\Q{r}{k}$, and integration by parts yield
\begin{gather}
\label{help:quadZeroPP}
\begin{aligned}
\Q{r}{k}\big[\vartheta_n' \varphi\big]
  & = \int_{I_n} \!\vartheta_n'(t) \varphi(t)\,\d t
= - \int_{I_n} \!\vartheta_n(t) \varphi'(t)\,\d t + (\vartheta_n \varphi)\big|_{t_{n-1}^+}^{t_n^-} \\
  & = - \underbrace{\Q{r}{k}\big[\vartheta_n \varphi'\big]}_{=0} - \delta_{0,k} (\vartheta_n \varphi)(t_{n-1}^+)
= - \delta_{0,k} (\vartheta_n \varphi)(t_{n-1}^+) \qquad \forall \varphi \in P_{r-k}(I_n,\RR).
\end{aligned}
\end{gather}
Precisely, we used that both $\vartheta_n' \varphi$ and $\vartheta_n \varphi'$ are polynomials
of maximal degree $2r-k$ and that $\vartheta_n$ vanishes in all quadrature points,
especially in $t_n^-$ and for $k\ge 1$ also in $t_{n-1}^+$.

For $k \geq 1$ we have $\vartheta_n(t_{n-1}^+) = \vartheta_n(t_n^-) = 0$.
Therefore, the initial condition holds due to
$\widetilde{U}(t_{n-1}^+) = U(t_{n-1}^+) = U(t_{n-1}^-) = \widetilde{U}(t_{n-1}^-)$.
For $k=0$ it is somewhat more complicated to prove $\widetilde{U}(t_{n-1}^+) = \widetilde{U}(t_{n-1}^-)$,
for details see~\eqref{enu:3} below. The remaining conditions can be verified as follows.
\begin{enumerate}[(i)]
\item \label{enu:1} \underline{Conditions at $t_n^-$ for
$0 \leq i \leq \left\lfloor\tfrac{k+2}{2}\right\rfloor-2 = \left\lfloor\tfrac{k}{2}\right\rfloor -1$:} \\
We obtain from the definitions of $\widetilde{U}$ and $U$
\begin{align*}
M \widetilde{U}^{(i+1)}(t_n^-)
= M U^{(i+1)}(t_n^-) + M a_n \underbrace{\vartheta_n^{(i+1)}(t_n^-)}_{=0} 
= \frac{\d^i}{\d t^i} F\big(t,U(t)\big) \Big|_{t=t_n^-} 
= \frac{\d^i}{\d t^i} F\big(t,\widetilde{U}(t)\big) \Big|_{t=t_n^-}
\end{align*}
since the derivatives of $U$ and $\widetilde{U}$ in $t_n^-$ coincide up to order
$\left\lfloor \tfrac{k}{2} \right\rfloor$ due to the definition of $\vartheta_n$.
\item \label{enu:2} \underline{Condition at $t_n^-$ for $i = \left\lfloor\tfrac{k+2}{2}\right\rfloor-1 = \left\lfloor\tfrac{k}{2}\right\rfloor$:} \\
Just like above we get, additionally using the definition of $a_n$, 
\begin{align*}
M \widetilde{U}^{\left(\left\lfloor \frac{k}{2} \right\rfloor+1\right)}(t_n^-)
& = M U^{\left(\left\lfloor \frac{k}{2} \right\rfloor+1\right)}(t_n^-)
+ M a_n \underbrace{\vartheta_n^{\left(\left\lfloor \frac{k}{2} \right\rfloor+1\right)}(t_n^-)}_{=1} \\
& = M U^{\left(\left\lfloor \frac{k}{2} \right\rfloor+1\right)}(t_n^-)
+ \frac{\d^{\left\lfloor \frac{k}{2} \right\rfloor}}{\d t^{\left\lfloor \frac{k}{2} \right\rfloor}} F\big(t,U(t)\big) \Big|_{t=t_n^-}
- M U^{\left(\left\lfloor \frac{k}{2} \right\rfloor+1\right)}(t_n^-) \\
& = \frac{\d^{\left\lfloor \frac{k}{2} \right\rfloor}}{\d t^{\left\lfloor \frac{k}{2} \right\rfloor}} F\big(t,U(t)\big) \Big|_{t=t_n^-}
  = \frac{\d^{\left\lfloor \frac{k}{2} \right\rfloor}}{\d t^{\left\lfloor \frac{k}{2} \right\rfloor}} F\big(t,\widetilde{U}(t)\big) \Big|_{t=t_n^-}.
\end{align*}
\item \label{enu:3} \underline{Variational condition:} \\
We have to prove that $\Q{r}{k}\big[(M \widetilde{U}',\varphi)\big] = \Q{r}{k}\big[(F(\cdot,\widetilde{U}(\cdot)),\varphi)\big]$
for all $\varphi \in P_{(r+1)-(k+2)}(I_n,\RR^d)$. Actually, we can even test with functions $\varphi \in P_{r-k}(I_n,\RR^d)$.

We first study the case $k \geq 1$. By the definitions of $\widetilde{U}$ and $U$,
the identity~\eqref{help:quadZeroPP}, and the fact that $U$ and $\widetilde{U}$ coincide
at all quadrature points we have
\begin{align*}
\Q{r}{k}\Big[\big(M \widetilde{U}',\varphi\big)\Big]
& = \Q{r}{k}\Big[\big(M U',\varphi\big)\Big] + \Q{r}{k}\Big[\big(M a_n \vartheta_n',\varphi\big)\Big] \\
& = \Q{r}{k}\Big[\big(F(\cdot,U(\cdot)),\varphi\big)\Big]
+ \underbrace{\Q{r}{k}\Big[\vartheta_n' \big(M a_n,\varphi\big)\Big]}_{
\substack{=0, \text{ since} \\ (M a_n,\varphi) \in P_{r-k}(I_n,\RR)}} \\
& = \Q{r}{k}\Big[\big(F(\cdot,\widetilde{U}(\cdot)),\varphi\big)\Big]
\qquad \qquad \forall \varphi \in P_{r-k}(I_n,\RR^d).
\end{align*}

Now let $k=0$. The same arguments as for $k \geq 1$ yield for all $\varphi \in P_{r}(I_n,\RR^d)$
\begin{gather*}
\Q{r}{0}\Big[\big(M \widetilde{U}',\varphi\big)\Big]
= \Q{r}{0}\Big[\big(F(\cdot,\widetilde{U}(\cdot)),\varphi\big)\Big]
- \big(M \big[U\big]_{n-1}, \varphi(t_{n-1}^+)\big)
- \vartheta_n(t_{n-1}^+) \big(M a_n,\varphi(t_{n-1}^+)\big).
\end{gather*}
We study the last two terms. Using the definitions of the jump $\big[U\big]_{n-1}$ and of $\widetilde{U}$,
we find
\begin{gather}
\label{help:anK0PP}
\big[U\big]_{n-1} + a_n \vartheta_n(t_{n-1}^+)
= \widetilde{U}(t_{n-1}^+) - U(t_{n-1}^-) = \big[\widetilde{U}\big]_{n-1}
\end{gather}
where we also exploited that $\vartheta_{n-1}(t_{n-1}^-)=0$. Hence, we have
\begin{gather}
\label{help:varCondPP}
\Q{r}{0}\Big[\big(M \widetilde{U}',\varphi\big)\Big] + \big(M \big[\widetilde{U}\big]_{n-1}, \varphi(t_{n-1}^+)\big)
= \Q{r}{0}\Big[\big(F(\cdot,\widetilde{U}(\cdot)),\varphi\big)\Big]
\qquad \forall \varphi \in P_{r}(I_n,\RR^d).
\end{gather}
Choosing the special test functions $\varphi_j \in P_{r}(I_n,\RR^d)$, $j=1,\dots,d$, that
vanish in the $r$ inner quadrature points of $\Q{r}{0}$ and satisfy
$\varphi_j(t_{n-1}^+) = e_j$ as well as having in
mind~\eqref{enu:2}, we component-wise
find $\big[\widetilde{U}\big]_{n-1} = 0$. Thereby, at once we have proven the initial
condition and verified the needed variational condition since now also the
jump term in~\eqref{help:varCondPP} can be dropped.
\item \label{enu:4}
\underline{Conditions at $t_{n-1}^+$ for
$0 \leq i \leq \left\lfloor\tfrac{k+2-1}{2}\right\rfloor-2 = \left\lfloor\tfrac{k-1}{2}\right\rfloor -1$:} \\
With an argumentation similar to that in~\eqref{enu:1} we gain
\begin{align*}
M \widetilde{U}^{(i+1)}(t_{n-1}^+)
& = M U^{(i+1)}(t_{n-1}^+) + M a_n \underbrace{\vartheta_n^{(i+1)}(t_{n-1}^+)}_{=0} \\
& = \frac{\d^i}{\d t^i} F\big(t,U(t)\big) \Big|_{t=t_{n-1}^+}
= \frac{\d^i}{\d t^i} F\big(t,\widetilde{U}(t)\big) \Big|_{t=t_{n-1}^+}\!.
\end{align*}
\item \label{enu:5}
\underline{Condition at $t_{n-1}^+$ for $i = \left\lfloor\tfrac{k+2-1}{2}\right\rfloor-1 = \left\lfloor\tfrac{k-1}{2}\right\rfloor$ if $k \geq 1$:} \\
It remains to prove that
\begin{gather*}
M \widetilde{U}^{\left(\left\lfloor\frac{k-1}{2}\right\rfloor+1\right)}(t_{n-1}^+)
= \frac{\d^{\left\lfloor \frac{k-1}{2} \right\rfloor}}{\d t^{\left\lfloor \frac{k-1}{2} \right\rfloor}}
F\big(t,\widetilde{U}(t)\big) \Big|_{t=t_{n-1}^+}.
\end{gather*}
We use the variational condition for $\widetilde{U}$ with specially
chosen test functions $\varphi_j \in P_{r-k}(I_n,\RR^d)$, $j=1,\dots,d$,
that vanish at all inner quadrature points of $\Q{r}{k}$, i.e.,
\begin{gather*}
\varphi_j(t_{n,i}) = 0,\quad i = 1,\ldots,r-k,
\qquad \qquad \text{and satisfy} \qquad \qquad
\varphi_j(t_{n-1}^+) = e_j.
\end{gather*}
As shown in~\eqref{enu:3} we have
\begin{gather*}
\Q{r}{k}\Big[\big(M \widetilde{U}',\varphi_j\big)\Big]
= \Q{r}{k}\Big[\big(F(\cdot,\widetilde{U}(\cdot)),\varphi_j\big)\Big],
\qquad j=1,\dots,d,
\end{gather*}
since $k\ge 1$.
The special choices of $\varphi_j$, the definition of the quadrature rule, and the already known
identities from~\eqref{enu:1}, \eqref{enu:2}, and~\eqref{enu:4} yield after a short calculation
using Leibniz' rule for the $i$th derivative that
\begin{align*}
&& \Q{r}{k}\Big[\big(M \widetilde{U}',\varphi_j\big)\Big]
& = \Q{r}{k}\Big[\big(F(\cdot,\widetilde{U}(\cdot)),\varphi_j\big)\Big],
                                        \quad j=1,\dots,d, \\
\Leftrightarrow
&& w_{\left\lfloor\frac{k-1}{2}\right\rfloor}^L
M \widetilde{U}^{\left(\left\lfloor\frac{k-1}{2}\right\rfloor+1\right)}
                                                (t_{n-1}^+) \cdot \underbrace{\varphi_j(t_{n-1}^+)}_{=e_j}
& =\begin{multlined}[t]
w_{\left\lfloor\frac{k-1}{2}\right\rfloor}^L
\frac{\d^{\left\lfloor \frac{k-1}{2} \right\rfloor}}{\d t^{\left\lfloor \frac{k-1}{2} \right\rfloor}} F\big(t,\widetilde{U}(t)\big)
\Big|_{t=t_{n-1}^+} \!\!\!\cdot \underbrace{\varphi_j(t_{n-1}^+)}_{=e_j}, \\
j=1,\dots,d,
\end{multlined} \\
\Leftrightarrow
&& M \widetilde{U}^{\left(\left\lfloor\frac{k-1}{2}\right\rfloor+1\right)}(t_{n-1}^+)
& =\frac{\d^{\left\lfloor \frac{k-1}{2} \right\rfloor}}{\d t^{\left\lfloor \frac{k-1}{2} \right\rfloor}} F\big(t,\widetilde{U}(t)\big)
\Big|_{t=t_{n-1}^+}.
\end{align*}
Note that we also used that $w_{\left\lfloor\frac{k-1}{2}\right\rfloor}^L \neq 0$.
\end{enumerate}
Collecting the above arguments, we see that $\widetilde{U}$ solves $\Q{r}{k}$-$\vtd{r+1}{k+2}$.
\end{proof}

From the definition~\eqref{eq:an}, it seems that a linear system with the mass matrix $M$
has to be solved in every time step in order to obtain the correction vector $a_n$.
However, the computational costs for calculating $a_n$ can be reduced significantly
as we shall show now.

\begin{proposition}
\label{prop:altern_an}
The correction vectors $a_n \in \RR^d$ defined in~\eqref{eq:an} for the
postprocessing presented in Theorem~\ref{th:postproc} can be alternatively
calculated by
\begin{gather*}
a_n = \frac{-1}{\vartheta_n^{\left(\left\lfloor \frac{k-1}{2} \right\rfloor +1\right)}(t_{n-1}^+)}
\left(U^{\left(\left\lfloor \frac{k-1}{2} \right\rfloor +1\right)}(t_{n-1}^+)
- \widetilde{U}^{\left(\left\lfloor \frac{k-1}{2} \right\rfloor +1\right)}
                                                (t_{n-1}^-)\right)
\qquad \text{for $n > 1$},
\end{gather*}
and
\begin{gather*}
a_1 = \frac{-1}{\vartheta_1^{\left(\left\lfloor \frac{k-1}{2} \right\rfloor +1\right)}(t_0^+)}
\left(U^{\left(\left\lfloor \frac{k-1}{2} \right\rfloor +1\right)}(t_0^+)
- u^{\left(\left\lfloor \frac{k-1}{2} \right\rfloor +1\right)}(t_0)\right)
\end{gather*}
where $u^{\left(\left\lfloor \frac{k-1}{2} \right\rfloor +1\right)}(t_0)$ is defined in~\eqref{eq:IC}.
\end{proposition}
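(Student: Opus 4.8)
The plan is to compare two representations of $\widetilde{U}$ at the left endpoint $t_{n-1}$ and exploit the global regularity $\widetilde{U}\in C^{\left\lfloor\frac{(k+2)-1}{2}\right\rfloor}=C^{\left\lfloor\frac{k+1}{2}\right\rfloor}$ that Theorem~\ref{th:postproc} guarantees. Note first that $\left\lfloor\frac{k+1}{2}\right\rfloor = \left\lfloor\frac{k-1}{2}\right\rfloor+1$, so the postprocessed solution $\widetilde{U}$ is continuously differentiable across interior mesh points up to order $\left\lfloor\frac{k-1}{2}\right\rfloor+1$; in particular the derivative $\widetilde{U}^{\left(\left\lfloor\frac{k-1}{2}\right\rfloor+1\right)}$ is continuous at $t_{n-1}$ for $n>1$, and at $t_0$ it matches the prescribed value $u^{\left(\left\lfloor\frac{k-1}{2}\right\rfloor+1\right)}(t_0)$ from the initial data, cf.~\eqref{def:globalDiscI} and~\eqref{eq:IC}. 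This is the only nontrivial input; everything else is bookkeeping.

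First I would fix $n>1$ and write, on $I_n$, the relation $\widetilde{U}\big|_{I_n} = U\big|_{I_n} + a_n\vartheta_n$ from Theorem~\ref{th:postproc}. Differentiating $\left\lfloor\frac{k-1}{2}\right\rfloor+1$ times and evaluating at $t_{n-1}^+$ gives
\begin{equation*}
\widetilde{U}^{\left(\left\lfloor\frac{k-1}{2}\right\rfloor+1\right)}(t_{n-1}^+)
= U^{\left(\left\lfloor\frac{k-1}{2}\right\rfloor+1\right)}(t_{n-1}^+)
+ a_n\,\vartheta_n^{\left(\left\lfloor\frac{k-1}{2}\right\rfloor+1\right)}(t_{n-1}^+).
\end{equation*}
By the continuity of $\widetilde{U}^{\left(\left\lfloor\frac{k-1}{2}\right\rfloor+1\right)}$ at $t_{n-1}$, the left-hand side equals $\widetilde{U}^{\left(\left\lfloor\frac{k-1}{2}\right\rfloor+1\right)}(t_{n-1}^-)$. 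Solving for $a_n$ yields exactly the claimed formula, provided $\vartheta_n^{\left(\left\lfloor\frac{k-1}{2}\right\rfloor+1\right)}(t_{n-1}^+)\neq 0$. The case $n=1$ is identical, using $\widetilde{U}^{\left(\left\lfloor\frac{k-1}{2}\right\rfloor+1\right)}(t_0^-)=u^{\left(\left\lfloor\frac{k-1}{2}\right\rfloor+1\right)}(t_0)$, which holds because $\widetilde{U}(t_0^-)=U(t_0^-)=u_0$ together with the fact that $\widetilde{U}$, being a solution of $\Q{r}{k}$-$\vtd{r+1}{k+2}$, satisfies the initial conditions~\eqref{def:globalDiscI}.

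The one point requiring justification is the nonvanishing of $\vartheta_n^{\left(\left\lfloor\frac{k-1}{2}\right\rfloor+1\right)}(t_{n-1}^+)$, which I expect to be the main (though still modest) obstacle. Here $\vartheta_n\in P_{r+1}(I_n,\RR)$ is characterized by vanishing at the $r+1$ quadrature points of $\Q{r}{k}$ together with the normalization $\vartheta_n^{\left(\left\lfloor\frac{k}{2}\right\rfloor+1\right)}(t_n^-)=1$. Recalling the point set~\eqref{quadrule:Qpp}, those quadrature points are: $\left\lfloor\frac{k}{2}\right\rfloor+1$ conditions at $t_n^-$ (derivatives up to order $\left\lfloor\frac{k}{2}\right\rfloor$), $\left\lfloor\frac{k-1}{2}\right\rfloor+1$ conditions at $t_{n-1}^+$ (derivatives up to order $\left\lfloor\frac{k-1}{2}\right\rfloor$), and $r-k$ interior nodes. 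Hence $\vartheta_n$ has a zero of order exactly $\left\lfloor\frac{k-1}{2}\right\rfloor+1$ at $t_{n-1}$ — if the order were higher, $\vartheta_n$ would have at least $\left\lfloor\frac{k-1}{2}\right\rfloor+2 + \left\lfloor\frac{k}{2}\right\rfloor+1 + (r-k) = r+2$ roots counted with multiplicity while being a nonzero polynomial of degree $r+1$, a contradiction (nonzero because the normalization forbids $\vartheta_n\equiv 0$). Therefore the $\left(\left\lfloor\frac{k-1}{2}\right\rfloor+1\right)$st derivative at $t_{n-1}^+$ is nonzero, and the division in the stated formula is legitimate. I would close by remarking that the practical gain is that $a_n$ is now obtained purely from the jump of a derivative of the already-computed solution $U$ across $t_{n-1}$, with no solve against $M$.
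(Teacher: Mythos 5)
Your proof is correct, and it rests on the same pivot as the paper's: the $\bigl(\lfloor\frac{k-1}{2}\rfloor+1\bigr)$th derivative of $\widetilde{U}$ is continuous across interior mesh points and matches the data $u^{(\lfloor\frac{k-1}{2}\rfloor+1)}(t_0)$ at the start, after which solving the differentiated relation $\widetilde{U}|_{I_n}=U|_{I_n}+a_n\vartheta_n$ for $a_n$ is immediate. The difference lies in how that continuity is obtained. You invoke the global regularity statement of Section~\ref{sec:methForm} applied to $\Q{r}{k}$-$\vtd{r+1}{k+2}$ (via Theorem~\ref{th:postproc}), i.e.\ $\widetilde{U}\in C^{\lfloor\frac{k+1}{2}\rfloor}(I,\RR^d)$, whereas the paper re-derives exactly this one-order-higher continuity by hand: it combines the point conditions verified in steps~\eqref{enu:1}, \eqref{enu:2} and~\eqref{enu:5} of the proof of Theorem~\ref{th:postproc} with the $C^{\lfloor\frac{k-1}{2}\rfloor}$-regularity of $U$ and the smoothness of $F$, transferring $\frac{\d^{\lfloor\frac{k-1}{2}\rfloor}}{\d t^{\lfloor\frac{k-1}{2}\rfloor}}F(t,\widetilde U(t))$ from $t_{n-1}^+$ to $t_{n-1}^-$; at $n=1$ it runs the analogous induction through~\eqref{eq:IC}. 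So your route is shorter and treats $k=0$ and $k\ge 1$ uniformly, but it leans on the regularity assertion for the $\vtd{r+1}{k+2}$ solution, which the paper states in Section~\ref{sec:methForm} only informally (``Studying the conditions \dots we see''), while the paper's own argument is self-contained modulo the lower-order regularity of $U$. Your root-counting proof that $\vartheta_n^{(\lfloor\frac{k-1}{2}\rfloor+1)}(t_{n-1}^+)\neq 0$ (zero of order exactly $\lfloor\frac{k-1}{2}\rfloor+1$ at $t_{n-1}$, since otherwise a nonzero polynomial of degree $r+1$ would have $r+2$ roots with multiplicity) is a welcome addition: the paper uses this nonvanishing implicitly, e.g.\ in the normalization of $\tilde\vartheta_n$ in Corollary~\ref{cor:postprocAlt}, without proving it.
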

\begin{proof}
For $k=0$, we get from~\eqref{help:anK0PP} combined with 
$\big[\widetilde{U}\big]_{n-1}=0$, which was shown just below~\eqref{help:varCondPP}, that
$a_n = \frac{-1}{\vartheta_n(t_{n-1}^+)} \big[U\big]_{n-1}
= \frac{-1}{\vartheta_n(t_{n-1}^+)} \big(U(t_{n-1}^+)-\widetilde{U}(t_{n-1}^-)\big)$.
Taking $\widetilde{U}(t_0^-) = U(t_0^-) = u(t_0) = u_0$ into account, we are done in this case.

Otherwise, for $k \geq 1$, using the definition of the postprocessing and~\eqref{enu:5}
of the proof of Theorem~\ref{th:postproc}, we obtain that
\begin{gather*}
M U^{\left(\left\lfloor \frac{k-1}{2} \right\rfloor +1\right)}(t_{n-1}^+)
+ M a_n \vartheta_n^{\left(\left\lfloor \frac{k-1}{2} \right\rfloor +1\right)}(t_{n-1}^+)
= M \widetilde{U}^{\left(\left\lfloor\frac{k-1}{2}\right\rfloor+1\right)}(t_{n-1}^+)
= \frac{\d^{\left\lfloor \frac{k-1}{2} \right\rfloor}}{\d t^{\left\lfloor \frac{k-1}{2} \right\rfloor}}
F\big(t,\widetilde{U}(t)\big) \Big|_{t=t_{n-1}^+}\!.
\end{gather*}
Furthermore, we have $\vartheta_n^{(i)}(t_{n-1}^+) = 0$ for $i = 0, \ldots, \left\lfloor \frac{k-1}{2} \right\rfloor$
and therefore
\begin{gather}
\label{eq:f_UtildeToU}
\frac{\d^{\left\lfloor \frac{k-1}{2} \right\rfloor}}{\d t^{\left\lfloor \frac{k-1}{2} \right\rfloor}} F\big(t,\widetilde{U}(t)\big) \Big|_{t=t_{n-1}^+}
= \frac{\d^{\left\lfloor \frac{k-1}{2} \right\rfloor}}{\d t^{\left\lfloor \frac{k-1}{2} \right\rfloor}} F\big(t,U(t)\big) \Big|_{t=t_{n-1}^+}.
\end{gather}
Since $F$ is sufficiently smooth and $U$ is $\left\lfloor \frac{k-1}{2} \right\rfloor$ times continuously differentiable we get
\begin{align*}
\frac{\d^{\left\lfloor \frac{k-1}{2} \right\rfloor}}{\d t^{\left\lfloor \frac{k-1}{2} \right\rfloor}} F\big(t,U(t)\big) \Big|_{t=t_{n-1}^+}
& \!\!= \frac{\d^{\left\lfloor \frac{k-1}{2} \right\rfloor}}{\d t^{\left\lfloor \frac{k-1}{2} \right\rfloor}}
F\big(t,U(t)\big) \Big|_{t=t_{n-1}^-} \\
& \!\!= \frac{\d^{\left\lfloor \frac{k-1}{2} \right\rfloor}}{\d t^{\left\lfloor \frac{k-1}{2} \right\rfloor}}
F\big(t,\widetilde{U}(t)\big) \Big|_{t=t_{n-1}^-}
\!\!= M \widetilde{U}^{(\left\lfloor \frac{k-1}{2} \right\rfloor +1)}(t_{n-1}^-),
\qquad n > 1,
\end{align*}
where also $\vartheta_{n-1}^{(i)}(t_{n-1}^-) = 0$ for $i = 0, \ldots, \left\lfloor \frac{k}{2} \right\rfloor$
and~\eqref{enu:1} or~\eqref{enu:2} of the proof of Theorem~\ref{th:postproc} were used.
Altogether exploiting that $M$ is regular an easy manipulation of the identities yields
\begin{gather*}
a_n = \frac{-1}{\vartheta_n^{\left(\left\lfloor \frac{k-1}{2} \right\rfloor +1\right)}(t_{n-1}^+)}
\left(U^{\left(\left\lfloor \frac{k-1}{2} \right\rfloor +1\right)}(t_{n-1}^+)
- \widetilde{U}^{\left(\left\lfloor \frac{k-1}{2} \right\rfloor +1\right)}(t_{n-1}^-)\right)\!,
\qquad n > 1.
\end{gather*}

A similar formula can also be established for $n=1$. Since $U$ satisfies~\eqref{eq:locProbA}
and $U(t_0) := u_0$, we obviously obtain, recalling the definition~\eqref{eq:IC} of $u^{(i)}(t_0)$, that
\begin{gather*}
U^{(i)}(t_0^+) = u^{(i)}(t_0)
\qquad \text{for $i = 0,\ldots,\left\lfloor \tfrac{k-1}{2} \right\rfloor$.}
\end{gather*}
Therefore, we have in~\eqref{eq:f_UtildeToU} for $n=1$
\begin{gather*}
\frac{\d^{\left\lfloor \frac{k-1}{2} \right\rfloor}}{\d t^{\left\lfloor \frac{k-1}{2} \right\rfloor}} F\big(t,\widetilde{U}(t)\big) \Big|_{t=t_0^+}
= \frac{\d^{\left\lfloor \frac{k-1}{2} \right\rfloor}}{\d t^{\left\lfloor \frac{k-1}{2} \right\rfloor}} F\big(t,U(t)\big) \Big|_{t=t_0^+}
= \frac{\d^{\left\lfloor \frac{k-1}{2} \right\rfloor}}{\d t^{\left\lfloor \frac{k-1}{2} \right\rfloor}} F\big(t,u(t)\big) \Big|_{t=t_0^+}
= M u^{\left(\left\lfloor \frac{k-1}{2} \right\rfloor+1\right)}(t_0).
\end{gather*}
This results in
\begin{gather*}
a_1 = \frac{-1}{\vartheta_1^{\left(\left\lfloor \frac{k-1}{2} \right\rfloor +1\right)}(t_0^+)}
\left(U^{\left(\left\lfloor \frac{k-1}{2} \right\rfloor +1\right)}(t_0^+)
- u^{\left(\left\lfloor \frac{k-1}{2} \right\rfloor +1\right)}(t_0)\right)\!.
\end{gather*}
Hence, the alternative calculation provides the same correction vector.
\end{proof}

Note that $a_n$ can be calculated in this way without solving a system of linear equations.
From the structure of $a_n$ we see that the postprocessing can be interpreted
as a correction of the jump in the lowest order derivative of the discrete solution that
is not continuous by construction.

Since the division by $\vartheta_n^{\left(\left\lfloor \frac{k-1}{2} \right\rfloor +1\right)}(t_{n-1}^+)$
changes the normalization of $\vartheta_n$ only, we conclude the following.

\begin{corollary}[Alternative postprocessing $\Q{r}{k}$-$\vtd{r}{k}$ $\leadsto$ $\Q{r}{k}$-$\vtd{r+1}{k+2}$]
\label{cor:postprocAlt}
Let $r, k \in \ZZ$, $0 \leq k \leq r$, and suppose that $U \in \Y{r}$ solves
$\Q{r}{k}$-$\vtd{r}{k}$. For every $n = 1, \ldots, N$ set
\begin{gather*}
\widetilde{U} \big|_{I_n} = U\big|_{I_n} - \tilde{a}_n \tilde{\vartheta}_n,
\qquad \tilde{\vartheta}_n \in P_{r+1}(I_n,\RR),
\end{gather*}
where $\tilde{\vartheta}_n(t) = \vartheta_n(t)/\vartheta_n^{\left(\left\lfloor \frac{k-1}{2}\right\rfloor +1\right)}(t_{n-1}^+)$
with $\vartheta_n$ from Theorem~\ref{th:postproc}, i.e., $\tilde{\vartheta}_n$ vanishes in all $(r+1)$ quadrature
points of $\Q{r}{k}$ and additionally satisfies $\tilde{\vartheta}_n^{\left(\left\lfloor \frac{k-1}{2} \right\rfloor +1\right)}(t_{n-1}^+) = 1$.
The vector $\tilde{a}_n \in \RR^d$ is defined by
\begin{gather*}
\tilde{a}_n
:= \begin{cases}
U^{\left(\left\lfloor \frac{k-1}{2} \right\rfloor +1\right)}(t_0^+)
- u^{\left(\left\lfloor \frac{k-1}{2} \right\rfloor +1\right)}(t_0), & n=1, \\
U^{\left(\left\lfloor \frac{k-1}{2} \right\rfloor +1\right)}(t_{n-1}^+)
- \widetilde{U}^{\left(\left\lfloor \frac{k-1}{2} \right\rfloor +1\right)}(t_{n-1}^-),
                                          & n>1,
\end{cases}
\end{gather*}
where $u^{\left(\left\lfloor \frac{k-1}{2} \right\rfloor +1\right)}(t_0)$ is given by~\eqref{eq:IC}.
Moreover, let $\widetilde{U}(t_0^-) = U(t_0^-)$. Then $\widetilde{U} \in \Y{r+1}$ solves
$\Q{r}{k}$-$\vtd{r+1}{k+2}$.
\end{corollary}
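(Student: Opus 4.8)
The plan is to show that the function $\widetilde{U}$ defined in the corollary is literally the same object as the postprocessed solution constructed in Theorem~\ref{th:postproc}; once this identification is made, the assertion that $\widetilde{U}$ solves $\Q{r}{k}$-$\vtd{r+1}{k+2}$ is nothing but the conclusion of that theorem. Throughout I abbreviate $m := \left\lfloor \frac{k-1}{2} \right\rfloor + 1$, so that the function $\vartheta_n$ of Theorem~\ref{th:postproc} satisfies $\vartheta_n^{(i)}(t_{n-1}^+) = 0$ for $i = 0, \ldots, m-1$, while $\vartheta_n^{(m)}(t_{n-1}^+) \neq 0$ (the latter being exactly the fact already used, via division, in the proof of Proposition~\ref{prop:altern_an}).

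First I would record the rescaling identity. By definition $\tilde{\vartheta}_n = \vartheta_n / \vartheta_n^{(m)}(t_{n-1}^+)$, hence for any vector $\tilde a_n$ one has $\tilde a_n \tilde{\vartheta}_n = \bigl(\tilde a_n / \vartheta_n^{(m)}(t_{n-1}^+)\bigr) \vartheta_n$. Consequently the correction term of the corollary equals that of Theorem~\ref{th:postproc}, i.e. $-\tilde a_n \tilde{\vartheta}_n = a_n \vartheta_n$, precisely when $a_n = -\tilde a_n / \vartheta_n^{(m)}(t_{n-1}^+)$, equivalently $\tilde a_n = -\vartheta_n^{(m)}(t_{n-1}^+)\, a_n$. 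So it remains to check that the quantity $\tilde a_n$ prescribed in the corollary indeed equals $-\vartheta_n^{(m)}(t_{n-1}^+)$ times the vector $a_n$ from~\eqref{eq:an}.

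This is where I would invoke Proposition~\ref{prop:altern_an}. For $n > 1$ it gives $a_n = \frac{-1}{\vartheta_n^{(m)}(t_{n-1}^+)}\bigl(U^{(m)}(t_{n-1}^+) - \widetilde{U}^{(m)}(t_{n-1}^-)\bigr)$, so multiplying by $-\vartheta_n^{(m)}(t_{n-1}^+)$ yields exactly $\tilde a_n = U^{(m)}(t_{n-1}^+) - \widetilde{U}^{(m)}(t_{n-1}^-)$, which is the case $n>1$ of the corollary's formula; for $n = 1$ the same manipulation applied to the $a_1$-formula of Proposition~\ref{prop:altern_an} reproduces $\tilde a_1 = U^{(m)}(t_0^+) - u^{(m)}(t_0)$ with $u^{(m)}(t_0)$ from~\eqref{eq:IC}. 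The one point demanding a little care is that the corollary's definition of $\tilde a_n$ for $n>1$ refers to $\widetilde{U}^{(m)}(t_{n-1}^-)$, i.e. to $\widetilde{U}$ on the previous subinterval, so the argument must be run as an induction on $n$: the base case uses $\widetilde{U}(t_0^-) = U(t_0^-)$, which is imposed identically in Theorem~\ref{th:postproc} and in the corollary; assuming $\widetilde{U}|_{I_1}, \ldots, \widetilde{U}|_{I_{n-1}}$ already coincide with the functions from Theorem~\ref{th:postproc}, the value $\widetilde{U}^{(m)}(t_{n-1}^-)$ is unambiguous and the computation above forces $\widetilde{U}|_{I_n}$ to coincide as well.

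Thus on every $I_n$ the function $\widetilde{U}$ of the corollary equals the $\widetilde{U}$ of Theorem~\ref{th:postproc}, and by that theorem $\widetilde{U} \in \Y{r+1}$ solves $\Q{r}{k}$-$\vtd{r+1}{k+2}$. I expect no genuine obstacle here: the entire content is the normalization bookkeeping together with the restatement of Proposition~\ref{prop:altern_an}, and the only subtlety is making the recursive definition of $\tilde a_n$ for $n>1$ consistent with the non-recursive definition~\eqref{eq:an} of $a_n$, which the induction above handles.
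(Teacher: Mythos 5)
Your proposal is correct and follows essentially the same route as the paper's own derivation of Corollary~\ref{cor:postprocAlt}: the corollary is obtained from Theorem~\ref{th:postproc} together with Proposition~\ref{prop:altern_an} by observing that dividing $\vartheta_n$ by $\vartheta_n^{\left(\left\lfloor \frac{k-1}{2}\right\rfloor+1\right)}(t_{n-1}^+)$ merely changes its normalization, so that $-\tilde{a}_n\tilde{\vartheta}_n = a_n\vartheta_n$ interval by interval; your explicit induction over $n$ just makes precise the identification of the recursively defined $\tilde{a}_n$ with the correction vectors of~\eqref{eq:an}, which the paper leaves implicit (the appendix additionally gives an independent direct verification, but that is a separate, second proof).
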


A direct proof for the alternative postprocessing is given in Appendix~\ref{app:postAlt}.

\section[Connections to collocation methods]{Connections between numerically integrated variational time discretization methods and collocation methods}
\label{sec:collocation}

We shall prove that the (local) solution of $\Q{r}{k}$-$\vtd{r+1}{l}$ with
$1 \leq l \leq k+2$ (which obviously includes $\Q{r}{k}$-$\vtd{r+1}{k+2}$)
can be characterized as the solution of the (local) collocation problem
with respect to the quadrature points of $\Q{r}{k}$, i.e.,

\medskip

Find $\widetilde{U} \in P_{r+1}(I_n,\RR^d)$ with given $\widetilde{U}(t_{n-1}) \in \RR^d$ such that
\begin{subequations}
\label{eq:collocation}
\begin{align}
M \widetilde{U}^{(i+1)}(t_n^-)
& = \frac{\d^i}{\d t^i} \Big( F\big(t,\widetilde{U}(t)\big) \Big)\Big|_{t=t_n^-},
&& i = 0, \ldots, \left\lfloor\tfrac{k}{2}\right\rfloor\!,
\label{eq:collocationE}\\
M \widetilde{U}^{(i+1)}(t_{n-1}^+)
& = \frac{\d^i}{\d t^i} \Big( F\big(t,\widetilde{U}(t)\big) \Big)\Big|_{t=t_{n-1}^+},
&& \text{if } k \geq 1, \, i = 0, \ldots, \left\lfloor\tfrac{k-1}{2}\right\rfloor\!,
\label{eq:collocationA}\\
M \widetilde{U}'(t_{n,i})
& = F\big(t_{n,i},\widetilde{U}(t_{n,i})\big),
&& i = 1, \ldots, r-k,
\label{eq:collocationI}
\end{align}
\end{subequations}
where $\widetilde{U}(t_0) = u_0$. Here, $t_{n,i}$ are the zeros of the $(r-k)$th
Jacobi-polynomial with respect to the weight
$(1+\hat{t})^{\left\lfloor \frac{k-1}{2}\right\rfloor +1} (1-\hat{t})^{\left\lfloor \frac{k}{2}\right\rfloor +1}$
transformed to the interval $[t_{n-1},t_n]$, see also~\eqref{quadrule:Qpp}.

Methods similar to~\eqref{eq:collocation} are known as
collocation methods with multiple nodes, see e.g.~\cite[p.~275]{HNW08}.
Unfortunately, existing results in the literature often neglect to study the unique
solvability or conditions on $F$ are not explicitly given. However, the connections mentioned
above directly imply that all these methods are equivalent as we will prove now.

\begin{theorem}[Equivalence to collocation methods]
\label{th:equiCollocation}
Let $r, k, l \in \ZZ$, $0 \leq k \leq r$, and $1 \leq l \leq k+2$. Then
$\widetilde{U} \in P_{r+1}(I_n,\RR^d)$ solves $\Q{r}{k}$-$\vtd{r+1}{l}$ if and
only if $\widetilde{U}$ solves the collocation method~\eqref{eq:collocation}
with respect to the quadrature points of $\Q{r}{k}$.
\end{theorem}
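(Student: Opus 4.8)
The plan is to show that each of the two properties in the statement --- ``$\widetilde{U}$ solves $\Q{r}{k}$-$\vtd{r+1}{l}$'' and ``$\widetilde{U}$ solves~\eqref{eq:collocation}'' --- is equivalent to the single identity
$M\widetilde{U}' = \I{r}{k}\big[F(\cdot,\widetilde{U}(\cdot))\big]$ on $I_n$ together with the prescribed value $\widetilde{U}(t_{n-1})$ (which is the same initial datum in both formulations). Here $\I{r}{k}$ is the Hermite interpolation operator transferred to $I_n$ from Section~\ref{sec:methForm}. For the collocation method this is immediate: the $r+1$ point conditions in~\eqref{eq:collocation} are exactly the $r+1$ interpolation conditions that characterise $\I{r}{k}\big[g\big]\in P_r(I_n,\RR^d)$ for $g:=F(\cdot,\widetilde{U}(\cdot))$. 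The tool for the other side is that $\Q{r}{k}[\psi]=\int_{I_n}\I{r}{k}\psi$ depends only on the interpolation data of $\psi$ at the quadrature nodes; consequently $\Q{r}{k}[\psi_1]=\Q{r}{k}[\psi_2]$ whenever $\psi_1,\psi_2$ share those data, in particular $\Q{r}{k}[\psi]=\Q{r}{k}[\I{r}{k}\psi]$.

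For the direction ``collocation $\Rightarrow$ $\Q{r}{k}$-$\vtd{r+1}{l}$'' I would assume $M\widetilde{U}'=\I{r}{k}[g]$ and check the defining conditions of $\Q{r}{k}$-$\vtd{r+1}{l}$. Because $1\le l\le k+2$, the point conditions of $\vtd{r+1}{l}$ reach only order $\lfloor l/2\rfloor-1\le\lfloor k/2\rfloor$ at $t_n^-$ and order $\lfloor(l-1)/2\rfloor-1\le\lfloor(k-1)/2\rfloor$ at $t_{n-1}^+$, hence are contained in the interpolation identity $M\widetilde{U}'=\I{r}{k}[g]$; the initial condition is the prescribed value. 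In the variational equation the jump term vanishes since $l\ge1$, and since $g$ and $\I{r}{k}[g]=M\widetilde{U}'$ share interpolation data, so do $(g,\varphi)$ and $(M\widetilde{U}',\varphi)$ (Leibniz's rule), whence $\Q{r}{k}[(M\widetilde{U}',\varphi)]=\Q{r}{k}[(g,\varphi)]$ for all admissible $\varphi$.

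For the converse, let $\widetilde{U}\in P_{r+1}(I_n,\RR^d)$ solve $\Q{r}{k}$-$\vtd{r+1}{l}$ and set $w:=M\widetilde{U}'-\I{r}{k}[g]\in P_r(I_n,\RR^d)$; the goal is $w\equiv0$. From the point conditions of $\vtd{r+1}{l}$ and the reproduction of endpoint derivatives by $\I{r}{k}$ one gets that $w$ vanishes at $t_n^-$ to order $m_R:=\lfloor l/2\rfloor$ and at $t_{n-1}^+$ to order $m_L:=\lfloor(l-1)/2\rfloor$ (note $m_R+m_L=l-1$), while the variational equation, after replacing $g$ by $\I{r}{k}[g]$ inside $\Q{r}{k}$ as above, becomes $\Q{r}{k}[(w,\varphi)]=0$ for all $\varphi\in P_{r+1-l}(I_n,\RR^d)$. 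Inserting the explicit node--weight form of $\Q{r}{k}$, expanding $(w,\varphi)^{(i)}$ by Leibniz's rule, and using the low-order vanishing of $w$, this turns into a pairing of $\varphi$ with the ``remaining data'' of $w$, namely the $r-l+2$ quantities $w(t_{n,j})$ ($1\le j\le r-k$), $w^{(p)}(t_n^-)$ ($m_R\le p\le\lfloor k/2\rfloor$) and $w^{(p)}(t_{n-1}^+)$ ($m_L\le p\le\lfloor(k-1)/2\rfloor$). The heart of the argument is that the linear map assigning to $\varphi\in P_{r+1-l}$ its vector of pairing coefficients is an isomorphism onto $\RR^{\,r-l+2}$: both spaces have dimension $r-l+2$, and the map is injective because the endpoint coefficients are triangular in the endpoint derivatives of $\varphi$ with non-zero diagonal entries carrying the factors $w_{\lfloor k/2\rfloor}^{R}$, respectively $w_{\lfloor(k-1)/2\rfloor}^{L}$ --- at this point one uses that all weights of $\Q{r}{k}$ are non-zero, see~\cite{JB09} --- so any $\varphi$ in the kernel vanishes at all $r-k$ interior nodes and to the stated orders at both ends, i.e.\ with more zeros counted with multiplicity than its degree, hence $\varphi\equiv0$. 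Surjectivity then forces all the remaining data of $w$ to vanish, and together with the low-order vanishing this gives $w$ a total of $(r-k)+(\lfloor k/2\rfloor+1)+(\lfloor(k-1)/2\rfloor+1)=r+1$ zeros counted with multiplicity; since $\deg w\le r$ we conclude $w\equiv0$, i.e.\ $M\widetilde{U}'=\I{r}{k}[g]$, which is precisely~\eqref{eq:collocation}.

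The main obstacle is exactly this unisolvence step for small $l$ (roughly $l\le k$): there $\Q{r}{k}$ is no longer exact on the products $(w,\varphi)$, so the integration-by-parts and energy arguments that work in the proof of Theorem~\ref{th:postproc} are not available, and one genuinely has to exploit the triangular structure coming from Leibniz's rule at the multiple endpoint nodes together with the known signs (hence non-vanishing) of the quadrature weights. The degenerate case $l=r+2$, which forces $k=r$ and leaves an empty test space and no interior nodes, is covered by the same computation (the ``remaining data'' being void), or can simply be noted directly since $\vtd{r+1}{r+2}$ and~\eqref{eq:collocation} then consist of the very same endpoint conditions.
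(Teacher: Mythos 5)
Your proposal is correct, but it is organized differently from the paper's proof, so a comparison is worthwhile. You recast both formulations as the single identity $M\widetilde{U}' = \I{r}{k}\big[F(\cdot,\widetilde{U}(\cdot))\big]$ on $I_n$: for the collocation problem this is immediate (the $r+1$ collocation conditions are exactly the Hermite data defining $\I{r}{k}$ applied to $g=F(\cdot,\widetilde{U}(\cdot))$, and $M\widetilde{U}'\in P_r$), and for the numerically integrated $\vtd{r+1}{l}$ you prove it by introducing the residual $w=M\widetilde{U}'-\I{r}{k}[g]\in P_r$, showing its low-order endpoint data vanish, and then establishing non-degeneracy of the pairing $\varphi\mapsto \Q{r}{k}[(w,\varphi)]$ on $P_{r+1-l}$ via the dimension count $r-l+2=\dim P_{r+1-l}$, the triangular Leibniz structure at the multiple nodes with nonzero top weights $w^R_{\lfloor k/2\rfloor}$, $w^L_{\lfloor (k-1)/2\rfloor}$, and zero counting against the degree; the conclusion $w\equiv 0$ then yields all collocation conditions at once. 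The paper instead recovers the missing point conditions one at a time by explicitly constructed test functions of the form $c\,(t-t_{n-1})^{\alpha}(t-t_n)^{\beta}\prod_i(t-t_{n,i})$, iterating over the derivative order, and finally obtains the interior collocation conditions from Lagrange-type test functions; the easy direction is handled there, as in your proposal, by Leibniz expansion of the quadrature of the residual. Both arguments rest on exactly the same ingredients (Leibniz at the Hermite nodes, non-vanishing of the quadrature weights from~\cite{JB09}, polynomial degree counting), so neither needs more hypotheses; your version is more compact, delivers the useful reformulation $M\widetilde{U}'=\I{r}{k}F(\cdot,\widetilde{U}(\cdot))$ (in the spirit of the paper's remark around~\eqref{eq:locProbUnified}) and treats all missing conditions simultaneously, while the paper's explicit test functions make the recovery of each individual collocation condition constructive. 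Two details you gloss over are routine but should be spelled out in a final write-up: the vector-valued case (test with $\varphi=\phi\,c$, $\phi$ scalar, $c\in\RR^d$ arbitrary, as the paper does) and the explicit triangularity computation showing that the coefficient of the highest surviving $\varphi$-derivative at each endpoint is a nonzero multiple of the top weight.
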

\begin{proof}
For clarity and convenience, we recall the conditions of the
$\Q{r}{k}$-$\vtd{r+1}{l}$ method with $0 \leq k \leq r$ and
$1 \leq l \leq k+2$.
\medskip

Given $\widetilde{U}(t_{n-1}^-)$, find $\widetilde{U} \in P_{r+1}(I_n,\RR^d)$ such that
\begin{subequations}
\label{eq:equi_l}
\begin{align}
\widetilde{U}(t_{n-1}^+)
& = \widetilde{U}(t_{n-1}^-),
&& \nonumber \\
M \widetilde{U}^{(i+1)}(t_{n}^-)
& = \frac{\d^i}{\d t^i} \Big(F\big(t,\widetilde{U}(t)\big)\Big)\Big|_{t=t_{n}^-},
&& \text{if } l \geq 2, \, i = 0,\ldots,\left\lfloor \tfrac{l}{2} \right\rfloor - 1, \label{eq:equiE_l} \\
M \widetilde{U}^{(i+1)}(t_{n-1}^+)
& = \frac{\d^i}{\d t^i} \Big(F\big(t,\widetilde{U}(t)\big)\Big)\Big|_{t=t_{n-1}^+},
&& \text{if } l \geq 3, \, i = 0,\ldots,\left\lfloor \tfrac{l-1}{2} \right\rfloor - 1, \label{eq:equiA_l} \\
\Q{r}{k}\Big[\big(M\widetilde{U}',\varphi\big)\Big]
& = \Q{r}{k}\Big[\big(F(\cdot,\widetilde{U}(\cdot)),\varphi\big)\Big]
&& \forall \varphi \in P_{r+1-l}(I_n,\RR^d). \label{eq:equiVar_l}
\end{align}
\end{subequations}

First of all, assume that $\widetilde{U}$ solves~\eqref{eq:collocation}. Then because
of~\eqref{eq:collocationE} and~\eqref{eq:collocationA} obviously $\widetilde{U}$
satisfies the conditions~\eqref{eq:equiE_l} and~\eqref{eq:equiA_l} since $l\le k+2$.
In order to gain a better understanding of the numerically integrated variational
condition, we have a look at its detailed definition. For
$\varphi \in C^{\left\lfloor \frac{k}{2} \right\rfloor}(\overline{I}_n,\RR^d)$ we have
by definition of the quadrature rule
\begin{align*}
\Q{r}{k}\Big[\big(M\widetilde{U}'-F(\cdot,\widetilde{U}(\cdot)),\varphi\big)\Big] = \,
& \frac{\tau_n}{2}
  \Bigg[ \sum_{i=0}^{\left\lfloor \frac{k-1}{2} \right\rfloor} \!\!w_i^L \left(\tfrac{\tau_n}{2}\right)^{\!i}
\frac{\d^i}{\d t^i} \big(M \widetilde{U}'(t) - F(t,\widetilde{U}(t)),\varphi(t)\big) \Big|_{t=t_{n-1}^+} \\
& \hspace{3em} + \sum_{i=1}^{r-k} \!w_i^I \big(M \widetilde{U}'(t_{n,i})-F(t_{n,i},\widetilde{U}(t_{n,i})),\varphi(t_{n,i})\big) \\
& \hspace{3em} + \sum_{i=0}^{\left\lfloor \frac{k}{2} \right\rfloor} \!w_i^R \left(\tfrac{\tau_n}{2}\right)^{\!i}
\frac{\d^i}{\d t^i} \big(M \widetilde{U}'(t) - F(t,\widetilde{U}(t)),\varphi(t)\big) \Big|_{t=t_{n}^-} \Bigg].
\end{align*}
Applying Leibniz' rule for the $i$th derivative, the right-hand side above can be rewritten as
\begin{align*}
& \frac{\tau_n}{2}
  \Bigg[ \sum_{i=0}^{\left\lfloor \frac{k-1}{2} \right\rfloor} \!\!w_i^L \left(\tfrac{\tau_n}{2}\right)^{\!i}
\sum_{j=0}^i \tbinom{i}{j} \big(M \widetilde{U}^{(j+1)}(t_{n-1}^+)
- \tfrac{\d^j}{\d t^j} F(t,\widetilde{U}(t)) \big|_{t=t_{n-1}^+},\varphi^{(i-j)}(t_{n-1}^+)\big) \\
& \hspace{3em} + \sum_{i=1}^{r-k} \!w_i^I \big(M \widetilde{U}'(t_{n,i}) - F(t_{n,i},\widetilde{U}(t_{n,i})),\varphi(t_{n,i})\big) \\
& \hspace{3em} + \sum_{i=0}^{\left\lfloor \frac{k}{2} \right\rfloor} \!w_i^R \left(\tfrac{\tau_n}{2}\right)^{\!i}
\sum_{j=0}^i \tbinom{i}{j} \big(M \widetilde{U}^{(j+1)}(t_{n}^-)
- \tfrac{\d^j}{\d t^j} F(t,\widetilde{U}(t)) \big|_{t=t_{n}^-},\varphi^{(i-j)}(t_{n}^-)\big) \Bigg].
\end{align*}
Using the collocation conditions~\eqref{eq:collocationA}, \eqref{eq:collocationI},
and~\eqref{eq:collocationE}, we see that all three sums equal to $0$.
Hence, we obtain
\begin{gather}
\label{eq:collocationVar}
\Q{r}{k}\Big[\big(M\widetilde{U}'-F(\cdot,\widetilde{U}(\cdot)),\varphi\big)\Big]
= 0 \qquad \forall \varphi \in C^{\left\lfloor \frac{k}{2} \right\rfloor}(\overline{I}_n,\RR^d)
\end{gather}
which immediately gives~\eqref{eq:equiVar_l}.

Now, we study the other direction and assume that $\widetilde{U}$ solves~\eqref{eq:equi_l}. In order
to verify~\eqref{eq:collocationE}, for example, we need to prove that~\eqref{eq:equiE_l}
also holds for $i = \left\lfloor \tfrac{l}{2} \right\rfloor, \ldots, \left\lfloor \tfrac{k}{2} \right\rfloor$.
For this purpose (in the case that $\left\lfloor \tfrac{l}{2} \right\rfloor \leq \left\lfloor \tfrac{k}{2} \right\rfloor$)
we use the special test function
\begin{gather*}
\psi = c \left(t-t_{n-1}\right)^{\textstyle\left\lfloor \tfrac{k-1}{2} \right\rfloor - \left\lfloor \tfrac{l-1}{2} \right\rfloor +1}
\left(t-t_n\right)^{\textstyle\left\lfloor \tfrac{k}{2} \right\rfloor - \left\lfloor \tfrac{l}{2} \right\rfloor}
\prod_{i=1}^{r-k} \left(t-t_{n,i}\right) \in P_{r+1-l}(I_n,\RR^d)
\end{gather*}
with an arbitrary vector $c \in \RR^d$ where $t_{n,i}$, $i=1,\ldots, r-k$, denote the
inner quadrature points of $\Q{r}{k}$. Then~\eqref{eq:equiVar_l}, the special
construction of $\psi$, \eqref{eq:equiE_l}, and~\eqref{eq:equiA_l} yield
\begin{align*}
0 & = \Q{r}{k}\Big[\big(M\widetilde{U}' - F(\cdot,\widetilde{U}(\cdot)),\psi\big)\Big] \\
  & = \sum_{i=0}^{\textstyle\left\lfloor \frac{k-1}{2} \right\rfloor} w_i^L \left(\tfrac{\tau_n}{2}\right)^{i+1}
\sum_{j=0}^i \tbinom{i}{j} \Big(M\widetilde{U}^{(j+1)}(t_{n-1}^+)
- \tfrac{\d^j}{\d t^j} F\big(t,\widetilde{U}(t)\big)\big|_{t=t_{n-1}^+},\psi^{(i-j)}(t_{n-1}^+)\Big) \\
  & \qquad + \sum_{i=0}^{\textstyle\left\lfloor \frac{k}{2} \right\rfloor} w_i^R \left(\tfrac{\tau_n}{2}\right)^{i+1}
\sum_{j=0}^i \tbinom{i}{j} \Big(M\widetilde{U}^{(j+1)}(t_{n}^-)
- \tfrac{\d^j}{\d t^j} F\big(t,\widetilde{U}(t)\big)\big|_{t=t_{n}^-},\psi^{(i-j)}(t_{n}^-)\Big) \\
  & = \sum_{i=0}^{\textstyle\left\lfloor \frac{k-1}{2} \right\rfloor} w_i^L \left(\tfrac{\tau_n}{2}\right)^{i+1}
\sum_{j=\textstyle\left\lfloor \tfrac{l-1}{2} \right\rfloor}^i \tbinom{i}{j} \Big(M\widetilde{U}^{(j+1)}(t_{n-1}^+)
- \tfrac{\d^j}{\d t^j} F\big(t,\widetilde{U}(t)\big)\big|_{t=t_{n-1}^+},\psi^{(i-j)}(t_{n-1}^+)\Big) \\
  & \qquad + \sum_{i=0}^{\textstyle\left\lfloor \frac{k}{2} \right\rfloor} w_i^R \left(\tfrac{\tau_n}{2}\right)^{i+1}
\sum_{j=\textstyle\left\lfloor \tfrac{l}{2} \right\rfloor}^i \tbinom{i}{j} \Big(M\widetilde{U}^{(j+1)}(t_{n}^-)
- \tfrac{\d^j}{\d t^j} F\big(t,\widetilde{U}(t)\big)\big|_{t=t_{n}^-},\psi^{(i-j)}(t_{n}^-)\Big).
\end{align*}
Furthermore, we have that $\psi^{(i)}(t_{n-1}^+) = 0$ for
$i=0,\ldots, \left\lfloor \tfrac{k-1}{2} \right\rfloor - \left\lfloor \tfrac{l-1}{2}\right\rfloor$
and $\psi^{(i)}(t_{n}^-) = 0$ for $i=0,\ldots, \left\lfloor \tfrac{k}{2} \right\rfloor - \left\lfloor \tfrac{l}{2} \right\rfloor - 1$.
Thus, the above identity simplifies to
\begin{gather*}
0 = w_i^R \left(\tfrac{\tau_n}{2}\right)^{i+1} \tbinom{i}{j} \Big(M\widetilde{U}^{(j+1)}(t_{n}^-)
- \tfrac{\d^j}{\d t^j} F\big(t,\widetilde{U}(t)\big)\big|_{t=t_{n}^-},\psi^{(i-j)}(t_{n}^-)\Big)
\end{gather*}
with $i= \left\lfloor \frac{k}{2} \right\rfloor$ and $j = \left\lfloor \frac{l}{2} \right\rfloor$.
Since $w_i^R \neq 0$, $\psi^{\left(\left\lfloor \frac{k}{2} \right\rfloor-\left\lfloor \frac{l}{2} \right\rfloor\right)}(t_{n}^-) \neq 0$, 
and the vector $c \in \RR^d$ can be chosen arbitrarily, it follows
\begin{gather*}
M \widetilde{U}^{(i+1)}(t_{n}^-) = \frac{\d^i}{\d t^i} \Big(F\big(t,\widetilde{U}(t)\big)\Big)\Big|_{t=t_{n}^-}
\qquad \text{for } i = \left\lfloor \tfrac{l}{2} \right\rfloor\!.
\end{gather*}
Using the test functions
\begin{gather*}
\psi = c \left(t-t_{n-1}\right)^{\textstyle\left\lfloor \tfrac{k-1}{2} \right\rfloor - \left\lfloor \tfrac{l-1}{2} \right\rfloor +1}
\left(t-t_n\right)^{\textstyle\left\lfloor \tfrac{k}{2} \right\rfloor - \left\lfloor \tfrac{l}{2} \right\rfloor - j}
\prod_{i=1}^{r-k} \left(t-t_{n,i}\right) \in P_{r+1-l-j}(I_n,\RR^d)
\end{gather*}
with an arbitrary vector $c \in \RR^d$ and
$j = 1, \ldots, \left\lfloor \tfrac{k}{2} \right\rfloor - \left\lfloor \tfrac{l}{2} \right\rfloor$
we iteratively also prove
\begin{gather*}
M \widetilde{U}^{(i+1)}(t_{n}^-) = \frac{\d^i}{\d t^i} \Big(F\big(t,\widetilde{U}(t)\big)\Big)\Big|_{t=t_{n}^-}
\qquad \text{for } i = \left\lfloor \tfrac{l}{2} \right\rfloor + 1, \ldots, \left\lfloor \tfrac{k}{2} \right\rfloor\!.
\end{gather*}
A similar argument can also be used for the missing point conditions at $t_{n-1}^+$.
Note that we needed for the above implications that the weights of $\Q{r}{k}$ do not vanish which has been
proven in~\cite{JB09}.

It remains to verify~\eqref{eq:collocationI}. Since we already know that $\widetilde{U}$ satisfies
the collocation conditions~\eqref{eq:collocationE} and~\eqref{eq:collocationA}, the
variational condition~\eqref{eq:equiVar_l} reduces to
\begin{gather}
\label{eq:connection1_helpIdentity}
\sum_{i=1}^{r-k} \!w_i^I \big(M \widetilde{U}'(t_{n,i}),\varphi(t_{n,i})\big)
= \sum_{i=1}^{r-k} \!w_i^I \big(F(t_{n,i},\widetilde{U}(t_{n,i})),\varphi(t_{n,i})\big)
\qquad \forall \varphi \in P_{r+1-l}(I_n, \RR^d).
\end{gather}
Also recall that $w_j^I > 0$ for all $1 \leq j \leq r-k$. Thus, choosing
in~\eqref{eq:connection1_helpIdentity} test functions of the form
\begin{gather*}
\varphi_{j}(t) = c \prod_{\substack{i=1 \\ i\neq j}}^{r-k} (t-t_{n,i})
\in P_{r-k-1}(I_n, \RR^d) \subseteq P_{r+1-l}(I_n, \RR^d)
\end{gather*}
with an arbitrary vector $c \in \RR^d$, we get the collocation
condition~\eqref{eq:collocationI} in $t_{n,j}$.

Hence, the stated equivalence has been proven.
\end{proof}

Summarizing, we have shown that a solution of $\Q{r}{k}$-$\vtd{r+1}{k+2}$
also solves $\Q{r}{k}$-$\vtd{r+1}{l}$ with $1 \leq l \leq k+2$ as well as a
collocation with respect to the quadrature points of $\Q{r}{k}$ and vice
versa. Shortly, we have
\begin{align*}
\Q{r}{k}\text{-}\vtd{r+1}{k+2}
& \quad \mathrel{\widehat{=}} \quad
  \Q{r}{k}\text{-}\vtd{r+1}{l} \quad \text{with} \quad 1 \leq l \leq k+2 \\
& \quad \mathrel{\widehat{=}} \quad
  \text{collocation with respect to the quadrature points of } \Q{r}{k}.
\end{align*}

\begin{remark}
Independent of the above findings, the connection between collocation
methods and (postprocessed) numerically integrated discontinuous Galerkin
methods (using the right-sided Gauss--Radau quadrature), i.e.,
Theorem~\ref{th:equiCollocation} for $k = 0 \leq r$ and $l=2$, was already
observed in~\cite{VR15}. Moreover, connections between collocation methods
and the numerically integrated continuous Galerkin--Petrov methods (using
interpolatory quadrature formulas with as many quadrature points as number
of independent variational conditions) are shown in~\cite{Hul72b,Hul72a}.
\end{remark}

\subsection{Error estimates for collocation methods}

The method defined in~\eqref{eq:collocation} is a collocation method with
multiple nodes as considered for example in~\cite[p.~275]{HNW08}. Thus, the
usual error analysis for collocation methods also applies here,
provided that $F$ and $u$ are sufficiently smooth. According
to~\cite[p.~276]{HNW08}, we have the following error estimate.

\begin{proposition}
\label{prop:collErrorTimePoints}
The collocation method~\eqref{eq:collocation} possesses the same order $2r-k+1$ as the
underlying quadrature formula $\Q{r}{k}$. Hence, it holds
\begin{gather*}
\max_{1 \leq n \leq N} \big\|(u-\widetilde{U})(t_n^-)\big\|
\leq C(F,u) \tau^{2r-k+1}
\end{gather*}
where $\widetilde{U}$ and $u$ denote the solutions of the collocation method~\eqref{eq:collocation}
and the initial value problem~\eqref{initValueProb}, respectively.
\end{proposition}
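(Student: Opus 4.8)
The plan is to recognize \eqref{eq:collocation} as a collocation method with multiple nodes and to run the classical local-error plus Lady-Windermere-fan argument (cf.\ \cite[pp.~275--276]{HNW08}), while carefully carrying along the regular mass matrix $M$ and extracting the superconvergence from the polynomial exactness degree $2r-k$ of $\Q{r}{k}$. Throughout, existence, uniqueness and boundedness of $\widetilde U$ for $\tau$ sufficiently small is taken from Theorem~\ref{th:equiCollocation} (together with Theorem~\ref{th:postproc} and the existence results established before), and $F$ and $u$ are assumed as smooth as the order requires.

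The first step is a reformulation of the collocation conditions. Setting $g_n(t):=F\big(t,\widetilde U(t)\big)$, conditions \eqref{eq:collocationE}--\eqref{eq:collocationI} say precisely that $M\widetilde U'\in P_r(I_n,\RR^d)$ is the Hermite interpolant of $g_n$ at the nodes of $\Q{r}{k}$ from \eqref{quadrule:Qpp}, i.e.\ $M\widetilde U'=\I{r}{k}g_n$ on $I_n$. Hence the defect $d_n:=M\widetilde U'-g_n=-(\Id-\I{r}{k})g_n$ is exactly the Hermite interpolation error, so the standard Hermite estimates give $\big\|d_n^{(j)}\big\|_{C^0(I_n)}\le C\tau_n^{\,r+1-j}$ for $0\le j\le r+1$ and $\big\|d_n^{(j)}\big\|_{C^0(I_n)}\le C$ for $r+1\le j\le 2r-k+1$, where $C$ depends on $F$, $u$ and on derivative bounds for $\widetilde U$. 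Those derivative bounds, namely $\big\|(u-\widetilde U)^{(j)}\big\|_{C^0(I_n)}\le C\big(\|(u-\widetilde U)(t_{n-1})\|+\tau_n^{\,r+1-j}\big)$ for $0\le j\le r+1$, I would obtain the usual way: write the error equation $Me_n'=\partial_uF(t,u)\,e_n+O(\|e_n\|^2)-d_n$ for $e_n:=u-\widetilde U$ and apply a Gronwall argument in $\tau_n$-scaled norms on $I_n$, using an inverse estimate on $P_{r+1}$ to close the loop.

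The second step is the superconvergent local error at $t_n$. By the Alekseev--Gröbner formula, $(u-\widetilde U)(t_n^-)=\Psi_n\,(u-\widetilde U)(t_{n-1})-\int_{I_n}\Phi_n(t_n,s)\,M^{-1}d_n(s)\,\d s$, where $\Phi_n$ is the resolvent of the variational equation along a path joining $u$ and $\widetilde U$ and $\Psi_n=\Phi_n(t_n,t_{n-1})$. The key algebraic observation is that $\Q{r}{k}\big[(\phi,d_n)\big]=0$ for every smooth $\phi$: at the interior quadrature nodes $d_n$ vanishes, and at $t_{n-1}^+$ and $t_n^-$ the derivatives of $d_n$ up to the orders $\left\lfloor\frac{k-1}{2}\right\rfloor$ and $\left\lfloor\frac{k}{2}\right\rfloor$ that the rule samples vanish, so by Leibniz' rule the sampled derivatives of $(\phi,d_n)$ vanish as well. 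Since $\Q{r}{k}$ is exact on $P_{2r-k}$, the quantity $\int_{I_n}(\phi,d_n)\,\d s=\int_{I_n}(\phi,d_n)\,\d s-\Q{r}{k}\big[(\phi,d_n)\big]$ is the quadrature error of $\Q{r}{k}$ for the smooth integrand $(\phi,d_n)$, hence bounded by $C\tau_n^{\,2r-k+2}\big\|(\phi,d_n)^{(2r-k+1)}\big\|_{C^0(I_n)}\le C\tau_n^{\,2r-k+2}$ by the derivative bounds on $d_n$ from the first step (note that $\phi$ must be kept as the genuinely smooth resolvent row, not replaced by its interpolant). Applying this componentwise with $\phi$ ranging over the rows of $\Phi_n(t_n,\cdot)M^{-1}$ (uniformly smooth on the fixed interval $I$ because $F$ is smooth and Lipschitz) yields $\|(u-\widetilde U)(t_n^-)\|\le\|\Psi_n\|\,\|(u-\widetilde U)(t_{n-1})\|+C\tau_n^{\,2r-k+2}$. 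Summing over $n=1,\dots,N$, with $N=O(\tau^{-1})$ and the stability factors $\prod\|\Psi_j\|$ bounded by $e^{LT}$ via the Lipschitz constant $L$ of $F$ on $I$, gives $\max_{1\le n\le N}\|(u-\widetilde U)(t_n^-)\|\le C(F,u)\,\tau^{2r-k+1}$.

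I expect the main obstacle to be not a single estimate but the uniformity bookkeeping: securing existence, uniqueness and an a~priori bound for $\widetilde U$ for all small $\tau$ (this is exactly where Theorem~\ref{th:equiCollocation} and the preceding existence results are needed), and then pushing the Gronwall argument of the first step from the $C^0$-estimate to the full scale of derivative estimates $\big\|(u-\widetilde U)^{(j)}\big\|_{C^0(I_n)}\le C\tau_n^{\,r+1-j}$, which is what makes $\big\|(\phi,d_n)^{(2r-k+1)}\big\|_{C^0(I_n)}$ bounded. The identity $\Q{r}{k}\big[(\phi,d_n)\big]=0$ with the correct endpoint multiplicities is the one genuinely new ingredient relative to simple-node collocation, and it is the mechanism behind the jump from the interior order $r+1$ to the superconvergence order $2r-k+1$.
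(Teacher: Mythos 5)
Your argument is correct and is essentially the same one the paper relies on: the paper does not write out a proof but simply invokes the classical superconvergence analysis for collocation with multiple nodes from Hairer--N{\o}rsett--Wanner, and what you have reconstructed (defect $d_n=-(\Id-\I{r}{k})F(\cdot,\widetilde U(\cdot))$ vanishing with the node multiplicities, Alekseev--Gr\"obner representation of the local error, the identity $\Q{r}{k}\big[(\phi,d_n)\big]=0$ reducing the integral to the quadrature error of order $\tau_n^{2r-k+2}$, and Lady Windermere's fan) is precisely that argument. The multiple-node quadrature identity you isolate is the same mechanism the paper uses in the proof of Theorem~\ref{th:equiCollocation} (cf.~\eqref{eq:collocationVar}).
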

Moreover, global error estimates can be shown by adapting techniques presented
in~\cite[Theorem~2]{Hul72b}. Together with~\cite[p.~276, pp.~212--214]{HNW08}, we obtain the
following.

\begin{proposition}
\label{prop:collErrorGlobal}
Let $\widetilde{U}$ denote the solution of the collocation method~\eqref{eq:collocation}
and $u$ the exact solution of~\eqref{initValueProb}.
Then we have
\begin{align*}
\sup_{t \in I} \big\|(u-\widetilde{U})(t)\big\|
& \leq C(F,u) \tau^{\min\{2r-k+1,(r+1)+1\}}
\intertext{and}
\sup_{t\in I} \big\|(u-\widetilde{U})^{(l)}(t)\big\|
& \leq C(F,u) \tau^{(r+1)+1-l},\qquad 1 \leq l \leq r+1,
\end{align*}
where the derivatives are understood in an interval-wise sense.
\end{proposition}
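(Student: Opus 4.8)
The plan is to run the classical collocation convergence analysis (as in~\cite[pp.~212--214]{HNW08} and~\cite[Theorem~2]{Hul72b}), interpreting $\widetilde{U}$ as the exact solution on each $I_n$ of the perturbed problem $M\widetilde{U}'(t) = F\big(t,\widetilde{U}(t)\big) + \delta_n(t)$ with \emph{defect} $\delta_n(t) := M\widetilde{U}'(t) - F\big(t,\widetilde{U}(t)\big)$. The two inputs are Proposition~\ref{prop:collErrorTimePoints}, which already supplies the mesh-point bound $\|(u-\widetilde{U})(t_n^-)\|\le C\tau^{2r-k+1}$, and a sharp interval-wise bound on $\delta_n$. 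Before both one needs uniform a priori control: for $\tau$ small enough $\widetilde{U}$ exists, stays in a fixed tube around the graph of $u$, and has interval-wise derivatives up to order $r+1$ bounded by a constant $C(F,u)$ independent of $n$ and $\tau$. Establishing this a priori bound (a standard continuation/bootstrapping argument, or an import from the existence statement underlying this section) is the step I expect to be the main obstacle for nonlinear $F$; the rest is bookkeeping.

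For the defect estimate, note that $\widetilde{U}|_{I_n}\in P_{r+1}(I_n,\RR^d)$, hence $\widetilde{U}^{(r+2)}\equiv 0$ and therefore $\delta_n^{(r+1)}(t) = -\frac{\d^{r+1}}{\d t^{r+1}}F\big(t,\widetilde{U}(t)\big)$, which is bounded by $C(F,u)$ via the a priori bounds and the chain rule (Fa\`{a} di Bruno). On the other hand, the collocation conditions~\eqref{eq:collocationE}, \eqref{eq:collocationA}, \eqref{eq:collocationI} say precisely that $\delta_n$ vanishes at $t_{n,1},\dots,t_{n,r-k}$, to order $\lfloor k/2\rfloor+1$ at $t_n^-$, and to order $\lfloor (k-1)/2\rfloor+1$ at $t_{n-1}^+$, i.e.\ with total multiplicity $(r-k)+(\lfloor k/2\rfloor+1)+(\lfloor(k-1)/2\rfloor+1)=r+1$. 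Thus the Hermite interpolant of $\delta_n$ at these $r+1$ (possibly confluent) nodes is the zero polynomial, and after transforming to the reference interval $[-1,1]$ the standard Hermite remainder estimates controlled by $\delta_n^{(r+1)}$ give, upon scaling back, $\|\delta_n^{(j)}\|_{L^\infty(I_n)}\le C(F,u)\,\tau_n^{\,r+1-j}$ for $0\le j\le r+1$; in particular $\|\delta_n\|_{L^\infty(I_n)}\le C\tau_n^{r+1}$.

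To conclude, on $I_n$ the error obeys $M(u-\widetilde{U})'(t) = \big(F(t,u(t))-F(t,\widetilde{U}(t))\big) - \delta_n(t)$. Since $M$ is regular and $F$ is Lipschitz in its second argument, Gronwall's inequality gives $\sup_{I_n}\|u-\widetilde{U}\|\le C\big(\|(u-\widetilde{U})(t_{n-1}^+)\| + \tau_n\,\|\delta_n\|_{L^\infty(I_n)}\big)$; using $(u-\widetilde{U})(t_0)=0$, Proposition~\ref{prop:collErrorTimePoints}, and the defect bound, this is $\le C\big(\tau^{2r-k+1}+\tau_n^{r+2}\big)$, and taking the maximum over $n$ yields the first claimed estimate since $\tau_n\le\tau$. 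For the derivative estimates, differentiate the error equation $l-1$ times, $1\le l\le r+1$: the term $\frac{\d^{l-1}}{\d t^{l-1}}\big(F(\cdot,u)-F(\cdot,\widetilde{U})\big)$ is bounded, via the chain rule and the a priori bounds, by $C\sum_{j=0}^{l-1}\sup_{I_n}\|(u-\widetilde{U})^{(j)}\|$, while $\|\delta_n^{(l-1)}\|_{L^\infty(I_n)}\le C\tau_n^{r+2-l}$ by the defect estimate. An induction on $l$ (the case $l=1$ being the sup estimate just obtained) then gives $\sup_{I_n}\|(u-\widetilde{U})^{(l)}\|\le C\tau_n^{r+2-l}\le C\tau^{(r+1)+1-l}$, where the contributions from the lower-order errors and from the propagated mesh-point error carry strictly larger powers of $\tau$ and are absorbed.
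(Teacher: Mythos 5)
Your proposal is correct and follows exactly the classical defect-based collocation analysis (view $\widetilde{U}$ as solving a perturbed ODE with a defect vanishing with total multiplicity $r+1$ at the collocation nodes, bound the defect by the Hermite remainder, then combine Gronwall on each $I_n$ with the mesh-point bound of Proposition~\ref{prop:collErrorTimePoints}), which is precisely the argument the paper invokes by citing~\cite{Hul72b} and~\cite[pp.~212--214, p.~276]{HNW08} without writing it out. The one ingredient you flag but do not prove --- uniform existence of $\widetilde{U}$ and $\tau$-independent bounds on its derivatives up to order $r+1$ --- is likewise left to the cited literature in the paper, so your treatment matches the intended proof in both substance and level of detail.
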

The term $2r-k+1$ inside the minimum is due to the fact that the
convergence order of these collocation methods is limited by the accuracy
of the underlying quadrature formula $\Q{r}{k}$ that is exactly $2r-k+1$.
Note that the limitation is active for $r=k$ only.

\subsection{Reversed postprocessing}

In Section~\ref{sec:postprocessing} we studied a postprocessing for $\Q{r}{k}$-$\vtd{r}{k}$.
We have seen that starting from a solution $U$ of $\Q{r}{k}$-$\vtd{r}{k}$
we can easily construct a solution $\widetilde{U}$ of
$\Q{r}{k}$-$\vtd{r+1}{k+2}$. This already implies uniqueness of $U$
provided that solutions of $\Q{r}{k}$-$\vtd{r+1}{k+2}$
or~\eqref{eq:collocation} are unique. Indeed, if $U^1$ and $U^2$ solve
$\Q{r}{k}$-$\vtd{r}{k}$ and their postprocessed solutions are identical,
then by construction of the postprocessing $U^1$ and $U^2$ coincide in the
$(r+1)$ quadrature points of $\Q{r}{k}$. Thus, since both are polynomials
of degree $r$, it follows $U^1 \equiv U^2$.

We now ask whether or not this postprocessing step can be reversed for $0 \leq k \leq r$.
Then this would imply the solvability of $\Q{r}{k}$-$\vtd{r}{k}$ provided that
$\Q{r}{k}$-$\vtd{r+1}{k+2}$ or~\eqref{eq:collocation} has a solution.

\begin{proposition}[Reversed postprocessing]
Let $r, k \in \ZZ$, $0 \leq k \leq r$, and suppose that $\widetilde{U} \in \Y{r+1}$ solves
$\Q{r}{k}$-$\vtd{r+1}{k+2}$. Then $\I{r}{k} \widetilde{U} \in \Y{r}$ solves
$\Q{r}{k}$-$\vtd{r}{k}$.
\end{proposition}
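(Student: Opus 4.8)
The plan is to set $V := \I{r}{k}\widetilde{U} \in \Y{r}$, with the convention $V(t_0^-):=\widetilde{U}(t_0^-)=u_0$, and to verify directly that $V$ satisfies all conditions~\eqref{eq:locProbI}--\eqref{eq:locProbVar} of $\Q{r}{k}$-$\vtd{r}{k}$. By Theorem~\ref{th:equiCollocation} (applied with $l=k+2$) the hypothesis ``$\widetilde{U}$ solves $\Q{r}{k}$-$\vtd{r+1}{k+2}$'' is equivalent to ``$\widetilde{U}$ solves the collocation problem~\eqref{eq:collocation} at the quadrature points of $\Q{r}{k}$''; in particular $\widetilde{U}$ inherits from the initial condition of $\vtd{r+1}{k+2}$ the continuity $\widetilde{U}(t_{n-1}^+)=\widetilde{U}(t_{n-1}^-)$ (and $\widetilde{U}(t_0^+)=u_0$). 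The structural fact driving the argument is that, restricted to $I_n$, the difference $V-\widetilde{U}$ is an $\RR^d$-multiple of the scalar polynomial $\vartheta_n\in P_{r+1}(I_n,\RR)$ introduced in Theorem~\ref{th:postproc}: both $V-\widetilde{U}$ and $\vartheta_n$ lie in $P_{r+1}(I_n,\cdot)$ and vanish in the Hermite sense at exactly the same $r+1$ nodes, since the interpolation points of $\I{r}{k}$ in~\eqref{quadrule:Qpp} coincide with the quadrature points of $\Q{r}{k}$ (derivatives up to order $\left\lfloor\tfrac{k-1}{2}\right\rfloor$ at $t_{n-1}^+$, derivatives up to order $\left\lfloor\tfrac{k}{2}\right\rfloor$ at $t_n^-$, and the values at $t_{n,1},\dots,t_{n,r-k}$). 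Hence $(V-\widetilde{U})\big|_{I_n}=\vartheta_n\,d_n$ for some $d_n\in\RR^d$, and from the interpolation conditions alone one reads off $V^{(j)}(t_{n-1}^+)=\widetilde{U}^{(j)}(t_{n-1}^+)$ for $0\le j\le\left\lfloor\tfrac{k-1}{2}\right\rfloor$, $V^{(j)}(t_n^-)=\widetilde{U}^{(j)}(t_n^-)$ for $0\le j\le\left\lfloor\tfrac{k}{2}\right\rfloor$, and $V(t_{n,i})=\widetilde{U}(t_{n,i})$.

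The point conditions~\eqref{eq:locProbI}, \eqref{eq:locProbE} and~\eqref{eq:locProbA} for $V$ then follow quickly. For the orders occurring there, the derivatives of $V$ and $\widetilde{U}$ coincide at the relevant endpoint, so by the chain rule (Fa\`a di Bruno) also $\tfrac{\d^i}{\d t^i}F(t,V)$ and $\tfrac{\d^i}{\d t^i}F(t,\widetilde{U})$ coincide there; chaining this with the collocation conditions~\eqref{eq:collocationE}/\eqref{eq:collocationA} of $\widetilde{U}$ and once more with the derivative agreement gives the claimed identities for $V$. The continuity~\eqref{eq:locProbI} for $k\ge1$ uses $V(t_{n-1}^+)=\widetilde{U}(t_{n-1}^+)=\widetilde{U}(t_{n-1}^-)=V(t_{n-1}^-)$, where $V(t_{n-1}^-)=\widetilde{U}(t_{n-1}^-)$ because the value at a right endpoint is always reproduced by $\I{r}{k}$.

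The main work is the variational condition~\eqref{eq:locProbVar}. Expanding $\Q{r}{k}\big[(MV'-F(\cdot,V),\varphi)\big]$ by the explicit formula for $\Q{r}{k}$ together with Leibniz' rule, every residual $MV^{(j+1)}-\tfrac{\d^j}{\d t^j}F(t,V)$ evaluated at $t_{n-1}^+$ with $j<\left\lfloor\tfrac{k-1}{2}\right\rfloor$, resp.\ at $t_n^-$ with $j<\left\lfloor\tfrac{k}{2}\right\rfloor$, vanishes by the argument of the previous paragraph, so the only surviving contributions come from the first non-reproduced derivative at each node and from the inner points; by $(V-\widetilde{U})|_{I_n}=\vartheta_n d_n$ these residuals equal $\vartheta_n^{(\lfloor(k-1)/2\rfloor+1)}(t_{n-1}^+)\,Md_n$, $\vartheta_n'(t_{n,i})\,Md_n$, and $\vartheta_n^{(\lfloor k/2\rfloor+1)}(t_n^-)\,Md_n=Md_n$ (using the normalization of $\vartheta_n$). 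Setting $\psi:=(Md_n,\varphi)\in P_{r-k}(I_n,\RR)$ and comparing term by term with the explicit expansion of $\Q{r}{k}\big[\vartheta_n'\psi\big]$ shows $\Q{r}{k}\big[(MV'-F(\cdot,V),\varphi)\big]=\Q{r}{k}\big[\vartheta_n'\psi\big]$, whence~\eqref{help:quadZeroPP} gives
\begin{gather*}
\Q{r}{k}\big[(MV'-F(\cdot,V),\varphi)\big]=-\delta_{0,k}\,\vartheta_n(t_{n-1}^+)\,(Md_n,\varphi(t_{n-1}^+))
\qquad\forall\varphi\in P_{r-k}(I_n,\RR^d).
\end{gather*}
For $k\ge1$ this is exactly~\eqref{eq:locProbVar}. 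For $k=0$ it remains to identify $\vartheta_n(t_{n-1}^+)d_n$ with the jump $[V]_{n-1}$: since the value at the right endpoint is reproduced and $\widetilde{U}$ is continuous, $V(t_{n-1}^-)=\widetilde{U}(t_{n-1}^-)=\widetilde{U}(t_{n-1}^+)$ (resp.\ $=u_0$ for $n=1$), while $V(t_{n-1}^+)=\widetilde{U}(t_{n-1}^+)+\vartheta_n(t_{n-1}^+)d_n$, hence $[V]_{n-1}=\vartheta_n(t_{n-1}^+)d_n$ and, $M$ being regular, the jump version of~\eqref{eq:locProbVar} follows.

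I expect the only genuine obstacle to be the bookkeeping in the third paragraph: keeping precise track of which derivatives of $V$ are and are not reproduced by $\I{r}{k}$, checking that after the Leibniz expansion the sole surviving residual at $t_{n-1}^+$ sits at order $\left\lfloor\tfrac{k-1}{2}\right\rfloor+1$ and the sole one at $t_n^-$ at order $\left\lfloor\tfrac{k}{2}\right\rfloor+1$ (with the correct binomial and step-size factors), and matching these one-to-one with the expansion of $\Q{r}{k}[\vartheta_n'\psi]$ so that~\eqref{help:quadZeroPP} can be invoked; the $k=0$ jump term is a small, separate case.
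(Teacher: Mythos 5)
Your proposal is correct and takes essentially the same route as the paper's proof: both rely on Theorem~\ref{th:equiCollocation} (equivalently the identity~\eqref{eq:collocationVar}) to get the full collocation/extended variational information for $\widetilde{U}$, on the observation that $\I{r}{k}\widetilde{U}-\widetilde{U}$ restricted to $I_n$ is an $\RR^d$-multiple of the auxiliary polynomial vanishing in the Hermite sense at the quadrature nodes, and on~\eqref{help:quadZeroPP} to absorb the resulting $\vartheta_n'$-term, with the same jump identification in the case $k=0$. The only difference is organizational: the paper transfers the quadrature identity from $\widetilde{U}$ to $\I{r}{k}\widetilde{U}$ directly (replacing $F(\cdot,\widetilde{U})$ by $F(\cdot,\I{r}{k}\widetilde{U})$ under $\Q{r}{k}$ and adding the $\tilde{\vartheta}_n'$ correction), whereas you expand the residual of $\I{r}{k}\widetilde{U}$ pointwise via Leibniz and collapse it to $\Q{r}{k}\big[\vartheta_n'\psi\big]$ --- the same computation in a different order.
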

\begin{proof}
Let $\widetilde{U} \in P_{r+1}(I_n,\RR^d)$ solve $\Q{r}{k}$-$\vtd{r+1}{k+2}$. We
shall prove that then $\I{r}{k} \widetilde{U} \in P_{r}(I_n,\RR^d)$ is a solution
of $\Q{r}{k}$-$\vtd{r}{k}$. Since $\I{r}{k}$ conserves the derivatives up to
order $\left\lfloor \frac{k}{2} \right\rfloor$ at $t_{n}^-$ and up to order
$\left\lfloor \frac{k-1}{2} \right\rfloor$ at $t_{n-1}^+$, respectively, we have
\begin{align*}
M \big(\I{r}{k} \widetilde{U}\big)^{(i+1)}(t_n^-)
= M \widetilde{U}^{(i+1)}(t_n^-)
& = \frac{\d^i}{\d t^i} F\big(t,\widetilde{U}(t)\big) \Big|_{t=t_n^-}\\
& = \frac{\d^i}{\d t^i} F\big(t,\I{r}{k}\widetilde{U}(t)\big) \Big|_{t=t_n^-},
\qquad 
\text{if } k \geq 2, \, i = 0, \ldots, \left\lfloor\tfrac{k}{2}\right\rfloor-1, &
\end{align*}
and analogously
\begin{gather*}
M \big(\I{r}{k} \widetilde{U}\big)^{(i+1)}(t_{n-1}^+)
= \frac{\d^i}{\d t^i} F\big(t,\I{r}{k}\widetilde{U}(t)\big) \Big|_{t=t_{n-1}^+},
\qquad \text{if } k \geq 3, \, i = 0, \ldots, \left\lfloor\tfrac{k-1}{2}\right\rfloor-1.
\end{gather*}
These are in fact~\eqref{eq:locProbE} and~\eqref{eq:locProbA}.

It remains to prove that $\I{r}{k} \widetilde{U}$ also satisfies the variational
condition~\eqref{eq:locProbVar} with $\II = \Q{r}{k}$.
According to~\eqref{eq:collocationVar} we have for $\widetilde{U}$ that
\begin{gather*}
\Q{r}{k}\Big[\big(M\widetilde{U}',\varphi\big)\Big]
= \Q{r}{k}\Big[\big(F(\cdot,\widetilde{U}(\cdot)),\varphi\big)\Big]
\qquad \forall \varphi \in P_{r-k}(I_n,\RR^d).
\end{gather*}
Note that originally from the definition of $\Q{r}{k}$-$\vtd{r+1}{k+2}$ the
variational condition is postulated for $\varphi \in P_{r-k-1}(I_n,\RR^d)$ only.
Since $\I{r}{k} \widetilde{U} - \widetilde{U} \in P_{r+1}(I_n,\RR^d)$ vanishes in
the quadrature points of $\Q{r}{k}$ it holds
\begin{gather*}
\I{r}{k} \widetilde{U} - \widetilde{U} = c_n \tilde{\vartheta}_n
\qquad \text{with} \qquad
c_n = \big(\I{r}{k} \widetilde{U} - \widetilde{U}\big)^{(\left\lfloor \frac{k-1}{2} \right\rfloor +1)}
                        (t_{n-1}^+)\in\RR^d
\end{gather*}
and $\tilde{\vartheta}_n \in P_{r+1}(I_n,\RR)$ as defined in Corollary~\ref{cor:postprocAlt}.
Hence, using~\eqref{help:quadZeroPP} we conclude
\begin{align*}
\Q{r}{k}\Big[\big(M (\I{r}{k} \widetilde{U})',\varphi\big)\Big]
& = \Q{r}{k}\Big[\big(M \widetilde{U}',\varphi\big)\Big]
+ \Q{r}{k}\Big[\big(M c_n \tilde{\vartheta}_n',\varphi\big)\Big] \\
& = \Q{r}{k}\Big[\big(F(\cdot,\widetilde{U}(\cdot)),\varphi\big)\Big]
+ \Q{r}{k}\Big[\tilde{\vartheta}_n' \big(M c_n,\varphi\big)\Big] \\
& = \Q{r}{k}\Big[\big(F(\cdot,\I{r}{k} \widetilde{U}(\cdot)),\varphi\big)\Big]
- \delta_{0,k} \tilde{\vartheta}_n(t_{n-1}^+) \big(M c_n,\varphi(t_{n-1}^+)\big)
\quad \forall \varphi \in P_{r-k}(I_n,\RR^d)
\end{align*}
which, because of
\begin{gather*}
\tilde{\vartheta}_n(t_{n-1}^+) M c_n
= M \big(\I{r}{0} \widetilde{U} - \widetilde{U}\big)(t_{n-1}^+)
= M \big(\I{r}{0} \widetilde{U}(t_{n-1}^+) - \widetilde{U}(t_{n-1}^-)\big)
= M \big[\I{r}{0} \widetilde{U}\big]_{n-1}
\end{gather*}
for $k=0$, completes the argument.
\end{proof}

\subsection{Consequences for existence, uniqueness, and error estimates}

The connections between numerically integrated variational time
discretization methods and collocation methods with multiple nodes observed
in the previous subsections can now be used to obtain results on the
existence and uniqueness of solutions of $\Q{r}{k}$-$\vtd{r}{k}$ as well as
give rise to global error estimates and superconvergence estimates in the
time mesh points.

\begin{corollary}[Existence and uniqueness]
If there is a solution $\widetilde{U} \in P_{r+1}(I_n,\RR^d)$ of the
collocation method with multiple nodes defined by~\eqref{eq:collocation}
then $U = \I{r}{k} \widetilde{U} \in P_{r}(I_n,\RR^d)$ solves
$\Q{r}{k}$-$\vtd{r}{k}$. Furthermore, if $\widetilde{U}$ is uniquely
defined as solution of~\eqref{eq:collocation} then so is $U$ as solution of
$\Q{r}{k}$-$\vtd{r}{k}$.
\end{corollary}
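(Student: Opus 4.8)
The plan is to chain together two results already established in this section. We want to show that a solution of the collocation method~\eqref{eq:collocation} gives rise to a solution of $\Q{r}{k}$-$\vtd{r}{k}$, and that uniqueness is inherited. First I would invoke Theorem~\ref{th:equiCollocation} with $l = k+2$: a function $\widetilde{U} \in P_{r+1}(I_n,\RR^d)$ solves the collocation method~\eqref{eq:collocation} with respect to the quadrature points of $\Q{r}{k}$ if and only if it solves $\Q{r}{k}$-$\vtd{r+1}{k+2}$. (Note that $1 \leq k+2 \leq k+2$, so the hypothesis $1 \leq l \leq k+2$ is satisfied for every admissible $k$.) Hence the hypothesized collocation solution $\widetilde{U}$ is in fact a solution of $\Q{r}{k}$-$\vtd{r+1}{k+2}$.

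Next I would apply the reversed postprocessing proposition proved just above: since $\widetilde{U} \in \Y{r+1}$ solves $\Q{r}{k}$-$\vtd{r+1}{k+2}$, the interpolant $U := \I{r}{k}\widetilde{U} \in \Y{r}$ solves $\Q{r}{k}$-$\vtd{r}{k}$. This establishes the existence half of the statement. Strictly speaking the reversed postprocessing proposition is phrased globally (for $\widetilde{U} \in \Y{r+1}$), but its proof proceeds subinterval by subinterval, so applying it on a single $I_n$ with the inherited starting value is immediate; one only has to note that $\I{r}{k}$ preserves the relevant end-point derivative data so that the initial conditions match up across subintervals.

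For the uniqueness claim, suppose $\widetilde{U}$ is the unique solution of~\eqref{eq:collocation} on $I_n$ (with given starting value) and suppose $U^1, U^2 \in P_r(I_n,\RR^d)$ both solve $\Q{r}{k}$-$\vtd{r}{k}$ with the same starting data. By the postprocessing of Theorem~\ref{th:postproc} (equivalently Corollary~\ref{cor:postprocAlt}), each $U^j$ lifts to a solution $\widetilde{U}^j \in P_{r+1}(I_n,\RR^d)$ of $\Q{r}{k}$-$\vtd{r+1}{k+2}$, hence of~\eqref{eq:collocation} by Theorem~\ref{th:equiCollocation} again. Uniqueness of the collocation solution forces $\widetilde{U}^1 = \widetilde{U}^2$. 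Since $U^j = \I{r}{k}\widetilde{U}^j$ — because the postprocessing correction $\widetilde{U}^j - U^j$ is a multiple of $\vartheta_n$, which vanishes at all $r+1$ quadrature points of $\Q{r}{k}$, so interpolating $\widetilde{U}^j$ at those points recovers exactly $U^j$ — we conclude $U^1 = U^2$. This is essentially the argument already sketched in the ``Reversed postprocessing'' subsection for deducing uniqueness of $U$ from uniqueness of the collocation/$\vtd{r+1}{k+2}$ solution, so I would simply organize those observations.

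The main obstacle is not a deep one but a bookkeeping one: one must be careful that the ``$U^j = \I{r}{k}\widetilde{U}^j$'' step is legitimate, i.e. that the postprocessed solution, when interpolated back at the quadrature nodes of $\Q{r}{k}$, returns the original discrete solution and not some other polynomial of degree $r$. This follows because $\widetilde{U}^j$ and $U^j$ agree at all $r+1$ quadrature points (the correction term is a multiple of $\vartheta_n$ resp.\ $\tilde\vartheta_n$, which vanishes there) and $\I{r}{k}$ is a projection onto $P_r$ reproducing exactly that nodal data; one should also check the end-point derivative nodes are handled consistently, which is precisely the Hermite-type interpolation property~\eqref{quadrule:Qpp} of $\I{r}{k}$. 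Everything else is a direct concatenation of Theorem~\ref{th:equiCollocation}, the reversed postprocessing proposition, and Theorem~\ref{th:postproc}.
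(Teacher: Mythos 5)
Your proposal is correct and takes essentially the same route the paper intends for this corollary: existence by combining Theorem~\ref{th:equiCollocation} (with $l=k+2$) with the reversed-postprocessing proposition, and uniqueness by lifting two solutions of $\Q{r}{k}$-$\vtd{r}{k}$ via Theorem~\ref{th:postproc}, using uniqueness of the collocation solution, and recovering $U^j=\I{r}{k}\widetilde{U}^j$ because the correction is a multiple of $\vartheta_n$, which vanishes (with the appropriate derivative orders) at all $r+1$ Hermite nodes. This matches the argument sketched at the start of the ``Reversed postprocessing'' subsection, so no gaps remain.
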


\begin{corollary}[Global error estimates]
\label{cor:globalErrorEst}
Let the error estimates of Proposition~\ref{prop:collErrorGlobal}
hold for the solution $\widetilde{U}$ of~\eqref{eq:collocation} and the exact
solution $u$ of~\eqref{initValueProb}. Then we have for the solution
$U$ of $\Q{r}{k}$-$\vtd{r}{k}$ that
\begin{align*}
\sup_{t \in I} \big\|(u-U)(t)\big\|
& \leq C(F,u) \tau^{r+1}
\intertext{and}
\sup_{t \in I} \big\|(u-U)^{(l)}(t)\big\|
& \leq C(F,u) \tau^{r+1-l},
\qquad 1 \leq l \leq r,
\end{align*}
where the derivatives are understood in an interval-wise sense.
\end{corollary}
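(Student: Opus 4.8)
The plan is to combine the reversed-postprocessing result with the error estimates for the collocation method established in Proposition~\ref{prop:collErrorGlobal}. Recall that by the Reversed postprocessing proposition, the solution $U$ of $\Q{r}{k}$-$\vtd{r}{k}$ is exactly $\I{r}{k}\widetilde U$, where $\widetilde U$ solves the collocation method~\eqref{eq:collocation}. Hence the error $u-U$ splits naturally as
\begin{gather*}
u - U = (u - \widetilde U) + (\widetilde U - \I{r}{k}\widetilde U),
\end{gather*}
and it suffices to bound each summand separately, interval by interval, in the sup-norm over $I$ (and likewise for the interval-wise derivatives up to order $r$).

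For the first summand $u-\widetilde U$ we simply quote Proposition~\ref{prop:collErrorGlobal}. Observe that $\min\{2r-k+1,(r+1)+1\}\ge r+1$ for all $0\le k\le r$, so the zeroth-order bound there gives $\sup_{t\in I}\|(u-\widetilde U)(t)\|\le C(F,u)\tau^{r+1}$, and for $1\le l\le r$ the derivative bound gives $\sup_{t\in I}\|(u-\widetilde U)^{(l)}(t)\|\le C(F,u)\tau^{(r+1)+1-l}\le C(F,u)\tau^{r+1-l}$. So this part already has the required powers of $\tau$ — in fact one order better for derivatives, which is harmless.

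For the second summand I would use that on each $I_n$ the function $\I{r}{k}\widetilde U$ is the Hermite-type interpolant of $\widetilde U\in P_{r+1}(I_n,\RR^d)$ described in~\eqref{quadrule:Qpp}, using $r+1$ nodes (counted with multiplicity). Since $\I{r}{k}$ is a projection onto $P_r(I_n,\RR^d)$ of Hermite type, the standard Hermite interpolation error estimate applies to the $C^{\infty}$ function $\widetilde U$: on $I_n$ one has $\|(\widetilde U-\I{r}{k}\widetilde U)^{(l)}\|_{\infty,I_n}\le C\tau_n^{r+1-l}\|\widetilde U^{(r+1)}\|_{\infty,I_n}$ for $0\le l\le r$ (after the usual scaling to the reference interval $[-1,1]$ and back, the factor $(\tau_n/2)^{r+1-l}$ coming from the chain rule). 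The remaining point is to control $\|\widetilde U^{(r+1)}\|_{\infty,I_n}$ uniformly in $\tau$: since $\widetilde U\in P_{r+1}$, its $(r+1)$st derivative is a constant vector on $I_n$, and by an inverse inequality together with the bound $\|\widetilde U\|_{\infty,I_n}\le \|u\|_{\infty,I}+C\tau^{r+1}\le C(F,u)$ (from the first summand), we get $\|\widetilde U^{(r+1)}\|_{\infty,I_n}\le C\tau_n^{-(r+1)}\|\widetilde U\|_{\infty,I_n}\le C(F,u)\tau_n^{-(r+1)}$; more cleanly, since $u$ is smooth one can compare $\widetilde U^{(r+1)}$ on $I_n$ with suitable divided-difference/Taylor data of $u$ and absorb the collocation error, obtaining $\|\widetilde U^{(r+1)}\|_{\infty,I_n}\le C(F,u)$ directly (this is the version I would prefer). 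Either way, plugging this into the Hermite estimate yields $\|(\widetilde U-\I{r}{k}\widetilde U)^{(l)}\|_{\infty,I_n}\le C(F,u)\tau_n^{r+1-l}\le C(F,u)\tau^{r+1-l}$ for $0\le l\le r$.

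Combining the two summands via the triangle inequality and taking the supremum over $n$ gives $\sup_{t\in I}\|(u-U)(t)\|\le C(F,u)\tau^{r+1}$ and $\sup_{t\in I}\|(u-U)^{(l)}(t)\|\le C(F,u)\tau^{r+1-l}$ for $1\le l\le r$, as claimed. The main obstacle is the bookkeeping in the second summand — specifically, getting a $\tau$-uniform bound on $\widetilde U^{(r+1)}$ on each $I_n$; once one argues (via an inverse inequality or a direct comparison of $\widetilde U$ with $u$) that this high derivative of the collocation polynomial stays bounded independently of $\tau$, everything else is the standard Hermite interpolation scaling argument already invoked for $\Ihat{r}{k}$ earlier in the paper.
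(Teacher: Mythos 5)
Your argument is essentially the paper's own proof: use $U=\I{r}{k}\widetilde U$ from the reversed postprocessing, split $u-U=(u-\widetilde U)+(\widetilde U-\I{r}{k}\widetilde U)$, bound the first term by Proposition~\ref{prop:collErrorGlobal}, bound the second term by the standard Hermite interpolation estimate $C\tau_n^{r+1-l}\sup_{I_n}\|\widetilde U^{(r+1)}\|$, and then control $\widetilde U^{(r+1)}$. For that last step your preferred route (comparing $\widetilde U^{(r+1)}$ with data of $u$) is exactly what the paper does, and the cleanest formulation is simply Proposition~\ref{prop:collErrorGlobal} with $l=r+1$: $\sup_{t\in I}\|(\widetilde U-u)^{(r+1)}(t)\|\le C(F,u)\tau$, hence $\sup_{t\in I}\|\widetilde U^{(r+1)}(t)\|\le C(F,u)\tau+C(u)$; no divided differences or Taylor bookkeeping are needed. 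One caveat: your ``either way'' claim is not correct, since the inverse-inequality fallback $\|\widetilde U^{(r+1)}\|_{\infty,I_n}\le C\tau_n^{-(r+1)}\|\widetilde U\|_{\infty,I_n}\le C(F,u)\tau_n^{-(r+1)}$ inserted into the Hermite estimate only yields $\|(\widetilde U-\I{r}{k}\widetilde U)^{(l)}\|_{\infty,I_n}\le C(F,u)\tau_n^{-l}$, which is useless even for $l=0$; only the comparison-with-$u$ route closes the argument, so the proof should rest on that alone.
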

\begin{proof}
Since $U = \I{r}{k} \widetilde{U}$, Proposition~\ref{prop:collErrorGlobal}
and standard error estimates yield
\begin{align*}
\sup_{t \in I} \big\|(u-U)(t)\big\|
& \leq \sup_{t \in I} \big\|(u-\widetilde{U})(t)\big\|
+ \sup_{t \in I} \big\|(\widetilde{U} - \I{r}{k} \widetilde{U})(t)\big\| \\
& \leq C(F,u) \tau^{\min\{2r-k+1,(r+1)+1\}} + C \tau^{r+1} \sup_{t \in I} \|\widetilde{U}^{(r+1)}(t)\|.
\end{align*}
In order to estimate $\sup_{t \in I} \|\widetilde{U}^{(r+1)}(t)\|$ we again use
Proposition~\ref{prop:collErrorGlobal} as follows
\begin{gather*}
\sup_{t \in I} \|\widetilde{U}^{(r+1)}(t)\|
\leq \sup_{t \in I} \big\|(\widetilde{U}-u)^{(r+1)}(t)\big\|
+ \sup_{t \in I} \|u^{(r+1)}(t)\|
\leq C(F,u) \tau + C(u).
\end{gather*}
Because of $0 \leq k \leq r$ this completes the proof of the first statement.
The second estimate can be proven similarly.
\end{proof}

\begin{corollary}[Superconvergence in time mesh points]
\label{cor:highSuperconv}
Let the error estimates of Proposition~\ref{prop:collErrorTimePoints}
hold for the solution $\widetilde{U}$ of~\eqref{eq:collocation} and the exact
solution $u$ of~\eqref{initValueProb}. Then we have
\begin{gather*}
\max_{1 \leq n \leq N} \big\|(u-U)(t_n^-)\big\|
                      = \max_{1 \leq n \leq N} \big\|(u-\widetilde{U})(t_n^-)\big\| \leq C(F,u) \tau^{2r-k+1}
\end{gather*}
for the solution $U$ of $\Q{r}{k}$-$\vtd{r}{k}$.
\end{corollary}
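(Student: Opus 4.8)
The plan is to show that the value of the discrete solution $U$ of $\Q{r}{k}$-$\vtd{r}{k}$ at each right endpoint $t_n^-$ coincides with that of the collocation solution $\widetilde{U}$, so that the superconvergence estimate of Proposition~\ref{prop:collErrorTimePoints} transfers verbatim. The bridge is the identity $U=\I{r}{k}\widetilde{U}$ on every subinterval, which comes from the reversed postprocessing.

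First I would identify the two solutions. By Theorem~\ref{th:equiCollocation} with $l=k+2$, the collocation method~\eqref{eq:collocation} with respect to the quadrature points of $\Q{r}{k}$ is equivalent to $\Q{r}{k}$-$\vtd{r+1}{k+2}$, so the given $\widetilde{U}$ also solves the latter; the reversed postprocessing proposition then shows that $\I{r}{k}\widetilde{U}\in\Y{r}$ solves $\Q{r}{k}$-$\vtd{r}{k}$. To conclude that this is precisely the solution $U$ named in the statement I would invoke uniqueness: by the forward postprocessing of Theorem~\ref{th:postproc}, every solution of $\Q{r}{k}$-$\vtd{r}{k}$ agrees with its (uniquely determined) postprocessed function in all $r+1$ quadrature points of $\Q{r}{k}$, so two distinct solutions would, after postprocessing, contradict uniqueness of the collocation/$\vtd{r+1}{k+2}$ solution; hence $U=\I{r}{k}\widetilde{U}$ on each $I_n$.

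Next I would use the structure of the interpolation operator. By the description of the nodes in~\eqref{quadrule:Qpp}, the Hermite-type operator $\I{r}{k}$ on $I_n$ interpolates the derivatives up to order $\left\lfloor\frac{k}{2}\right\rfloor$ at $t_n^-$; in particular it reproduces the point value there. Therefore $U(t_n^-)=(\I{r}{k}\widetilde{U})(t_n^-)=\widetilde{U}(t_n^-)$ for every $n=1,\dots,N$, and consequently $(u-U)(t_n^-)=(u-\widetilde{U})(t_n^-)$. Taking the maximum over $n$ gives the asserted equality of the two error quantities, and Proposition~\ref{prop:collErrorTimePoints} bounds the common value by $C(F,u)\,\tau^{2r-k+1}$.

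I expect the only genuinely delicate point to be the identification $U=\I{r}{k}\widetilde{U}$: one must make sure that the ``solution $U$ of $\Q{r}{k}$-$\vtd{r}{k}$'' referred to in the corollary is indeed the interpolated collocation solution, which is exactly where the uniqueness discussion from the reversed-postprocessing subsection enters. Once that identification is granted, the reproduction of the endpoint values by $\I{r}{k}$ and the cited proposition finish the argument in essentially one line.
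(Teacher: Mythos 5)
Your proof is correct and follows essentially the same route as the paper: the paper also identifies $U=\I{r}{k}\widetilde{U}$ (via the reversed postprocessing and the uniqueness discussion of that subsection), notes that $\I{r}{k}$ preserves the function value at $t_n^-$, and then applies Proposition~\ref{prop:collErrorTimePoints}. Your more explicit justification of the identification $U=\I{r}{k}\widetilde{U}$ is simply a fuller account of what the paper takes as already established.
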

\begin{proof}
Recall that $U = \I{r}{k} \widetilde{U}$ and that $\I{r}{k}$ especially preserves the
function value in $t_n^-$. Hence, $U(t_n^-) = \I{r}{k} \widetilde{U}(t_n^-) = \widetilde{U}(t_n^-)$
and the estimate follows immediately from Proposition~\ref{prop:collErrorTimePoints}.
\end{proof}

\begin{remark}[Superconvergence in quadrature points]
We obtain under the assumptions of Corollary~\ref{cor:globalErrorEst} also
a (lower order) superconvergence estimate for the solution $U$ of
$\Q{r}{k}$-$\vtd{r}{k}$ in the quadrature points of $\Q{r}{k}$ if $0 \leq k
< r$. In fact, let $t_{n,i}$, $i = 1,\ldots,r-k$, denote the local
quadrature points of $\Q{r}{k}$ in the interior of $I_n$. Then, we have
\begin{gather*}
\big\| (u-U)(t_{n,i}) \big\|
= \big\| (u-\widetilde{U})(t_{n,i})\big\|
\leq C(F,u) \tau^{(r+1)+1}.
\end{gather*}
In addition, we obtain
\begin{gather*}
\big\| (u-U)^{(l)}(t_n^-) \big\|
= \big\| (u-\widetilde{U})^{(l)}(t_n^-)\big\|
\leq C(F,u) \tau^{(r+1)+1-l}, \qquad 0\le l \le  \lfloor
\tfrac{k}{2}\rfloor,
\end{gather*}
and
\begin{gather*}
\big\| (u-U)^{(l)}(t_{n-1}^+) \big\|
= \big\| (u-\widetilde{U})^{(l)}(t_{n-1}^+)\big\|
\leq C(F,u) \tau^{(r+1)+1-l}, \qquad 0\le l \le  \lfloor
\tfrac{k-1}{2}\rfloor,
\end{gather*}
provided $k\ge 1$.

These superconvergence estimates especially imply
\begin{gather*}
\left(\sum_{n=1}^N \Q{r}{k}\Big[ \|u-U\|^2\Big]_{I_n}\right)^{\!\!1/2}
= \left(\sum_{n=1}^N \Q{r}{k}\Big[ \|u-\widetilde{U}\|^2\Big]_{I_n}\right)^{\!\!1/2}
\leq (t_N-t_0)^{1/2} C(F,u) \tau^{(r+1)+1}
\end{gather*}
which compared to
\begin{gather*}
\left(\sum_{n=1}^N \int_{I_n} \big\|(u-U)(x)\big\|^2 \d x\right)^{\!\!1/2}
\leq (t_N-t_0)^{1/2} C(F,u) \tau^{r+1}
\end{gather*}
gives an extra order of convergence.
\end{remark}

\begin{remark}[Superconconvergence of derivative(s) in time mesh points]
From the point conditions~\eqref{eq:locProbE} and the
bound of Corollary~\ref{cor:highSuperconv} we also gain superconvergence
estimates up to the $\left\lfloor \frac{k}{2} \right\rfloor$th
derivative of the solution $U$ of $\Q{r}{k}$-$\vtd{r}{k}$ in $t_n^-$,
provided that $F$ satisfies certain Lipschitz-conditions. Indeed,
supposing that
\begin{gather*}
\left\|\frac{\d^i}{\d t^i} \Big(F\big(t,v(t)\big) - F\big(t,w(t)\big)\Big)\Big|_{t=t_n^-}\right\|
\leq C \sum_{j=0}^i \big\|(v-w)^{(j)}(t_n^-)\big\|,
\end{gather*}
holds for all $0 \leq i \leq \left\lfloor\frac{k}{2}\right\rfloor-1$, 
we obtain
\begin{align*}
\big\|(u-U)^{(i+1)}(t_n^-)\big\|
= \left\|\frac{\d^i}{\d t^i} \Big(F\big(t,u(t)\big) - F\big(t,U(t)\big)\Big)\Big|_{t=t_n^-}\right\|
  & \leq C \sum_{j=0}^i \big\|(u-U)^{(j)}(t_n^-)\big\| \\
  & \leq C(F,u) \tau^{2r-k+1}
\end{align*}
by iteration over $i=0,\ldots,\left\lfloor\frac{k}{2}\right\rfloor-1$.
\end{remark}

\section{Interpolation cascade}
\label{sec:intCascade}

This section is restricted to study affine linear problems of the form

\medskip

Find $u : \overline{I} \to \RR^d$ such that
\begin{gather}
\label{initValueProbLinear}
M u'(t) = f(t)- A u(t), \qquad u(t_0) = u_0 \in \RR^d,
\end{gather}
where $M, A \in \RR^{d \times d}$ are time-independent matrices and $M$ is regular,
i.e., in the general setting we have $F(t,u) = f(t)-Au$.

\subsection{A slight modification of the method}

Let $0\le k\le r$. In order to solve~\eqref{initValueProbLinear} numerically,
we define the $\II$-$\vtd{r}{k}\big(g\big)$ problem by
\medskip

Given $U(t_{n-1}^-)$, find $U\in P_r(I_n,\RR^d)$ such that
\begin{subequations}
\begin{align}
U(t_{n-1}^+)
& = U(t_{n-1}^-),
&& \text{if } k \geq 1, \\
M U^{(i+1)}(t_n^-)
& = g^{(i)}(t_n^-)-AU^{(i)}(t_n^-),
&& \text{if } k \geq 2, \, i = 0, \ldots, \left\lfloor\tfrac{k}{2}\right\rfloor - 1, 
\label{eq:linlocProbE} \\
M U^{(i+1)}(t_{n-1}^+)
& = g^{(i)}(t_{n-1}^+)-AU^{(i)}(t_{n-1}^+),
&& \text{if } k \geq 3, \, i = 0, \ldots, \left\lfloor\tfrac{k-1}{2}\right\rfloor - 1,
\label{eq:linlocProbA}
\end{align}
and
\begin{equation}
\label{eq:linlocProbVar}
\IIn{ \big(M U',\varphi\big) }
+ \delta_{0,k} \big(M \big[U\big]_{n-1},\varphi(t_{n-1}^+) \big)
= \IIn{ \big(g - AU, \varphi\big) }
\qquad\forall \varphi\in P_{r-k}(I_n,\RR^d)
\end{equation}
\end{subequations}
where $U(t_0^-) = u_0$ and $g$ will be chosen later on depending on $f$. As before
$\II$ denotes an integrator, typically the integral over $I_n$ or a
quadrature formula.

Recall that $\Q{r}{k}$ denotes the quadrature rule associated
to $\vtd{r}{k}$ determined by~\eqref{quadrule:Qpp}. This quadrature rule is exact
for polynomials up to degree $2r-k$. Furthermore, $\I{r}{k}$ is the Hermite
interpolation associated to the quadrature rule $\Q{r}{k}$.

In a first step, we will consider $\Q{r}{k}$-$\vtd{r}{k}\big(\I{r+1}{k+2} f\big)$
for $0 \leq k \leq r-1$. Note that the case $r=k$ needs to be excluded
since otherwise $\I{r+1}{k+2} f$ would not be well defined.

In view of the postprocessing of Section~\ref{sec:postprocessing} the modified
method has some interesting properties as we shall show now.

\begin{theorem}
\label{th:modPostproc}
Let $r, k \in \ZZ$, $0 \leq k \leq r-1$. Suppose that $U \in \Y{r}$ solves
$\Q{r}{k}$-$\vtd{r}{k}\big(\I{r+1}{k+2}f\big)$.
Determine $\widetilde{U} \in \Y{r+1}$ by the postprocessing
of Theorem~\ref{th:postproc} or Corollary~\ref{cor:postprocAlt},
respectively. Then $\widetilde{U}$ solves $\Q{r+1}{k+2}$-$\vtd{r+1}{k+2}\big(f\big)$.
\end{theorem}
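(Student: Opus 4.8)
The strategy is to combine the postprocessing result of Theorem~\ref{th:postproc} with the equivalence to collocation methods (Theorem~\ref{th:equiCollocation}) and the fact that, for affine linear problems, the Hermite interpolant of $f$ does not distort the point conditions or the numerically integrated variational condition. Concretely, write $\widehat{U} := \I{r+1}{k+2} U$ for the lift of the $\Q{r}{k}$-$\vtd{r}{k}(\I{r+1}{k+2}f)$ solution; I expect $\widehat{U}$ and $\widetilde{U}$ to coincide, so that the task reduces to showing $\widetilde{U}$ solves the collocation problem~\eqref{eq:collocation} with the quadrature points of $\Q{r+1}{k+2}$ and right-hand side $f$, and then invoking Theorem~\ref{th:equiCollocation} (with the roles of $r,k$ played by $r+1,k+2$ and $l=k+2$) to conclude that $\widetilde{U}$ solves $\Q{r+1}{k+2}$-$\vtd{r+1}{k+2}(f)$.

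\textbf{Step 1: unwind the modified method.} Apply Theorem~\ref{th:postproc} to $U$: since $U$ solves $\Q{r}{k}$-$\vtd{r}{k}$ with right-hand side $g = \I{r+1}{k+2}f$ (which is still affine linear in $U$, namely $g - AU$), the theorem — whose proof only uses exactness of $\Q{r}{k}$ up to degree $2r-k$ and the structural conditions — yields that $\widetilde{U}$ solves $\Q{r}{k}$-$\vtd{r+1}{k+2}$ with the \emph{same} right-hand side $\I{r+1}{k+2}f - A\,\cdot\,$. By Theorem~\ref{th:equiCollocation} (applicable since $1 \le k+2 \le k+2$), $\widetilde{U}$ therefore solves the collocation problem~\eqref{eq:collocation} at the quadrature points of $\Q{r}{k}$, i.e.
\begin{gather*}
M\widetilde{U}^{(i+1)}(t_n^-) = (\I{r+1}{k+2}f)^{(i)}(t_n^-) - A\widetilde{U}^{(i)}(t_n^-), \quad i = 0,\dots,\lfloor\tfrac{k}{2}\rfloor,
\end{gather*}
and analogously at $t_{n-1}^+$ for $i = 0,\dots,\lfloor\tfrac{k-1}{2}\rfloor$, plus $M\widetilde{U}'(t_{n,i}) = (\I{r+1}{k+2}f)(t_{n,i}) - A\widetilde{U}(t_{n,i})$ at the $r-k$ inner points $t_{n,i}$ of $\Q{r}{k}$.

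\textbf{Step 2: the quadrature points of $\Q{r}{k}$ are exactly those of $\Q{r+1}{k+2}$, and $\I{r+1}{k+2}f$ agrees with $f$ there.} Comparing the defining recipe~\eqref{quadrule:Qpp} for $\Q{r}{k}$ with that for $\Q{r+1}{k+2}$: at the left end both use derivatives up to order $\lfloor\tfrac{k-1}{2}\rfloor = \lfloor\tfrac{(k+2)-1}{2}\rfloor - 1$, at the right end up to $\lfloor\tfrac{k}{2}\rfloor = \lfloor\tfrac{k+2}{2}\rfloor - 1$, and the interior zeros are those of the $(r-k) = ((r+1)-(k+2))$th Jacobi polynomial w.r.t.\ the \emph{same} weight $(1+\hat t)^{\lfloor\frac{k-1}{2}\rfloor+1}(1-\hat t)^{\lfloor\frac{k}{2}\rfloor+1}$. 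So the two sets of interpolation/quadrature data coincide, except that $\Q{r+1}{k+2}$ additionally uses the $\lfloor\tfrac{k}{2}\rfloor$th derivative at the right end and the $\lfloor\tfrac{k-1}{2}\rfloor$th at the left. Because $\I{r+1}{k+2}$ is the Hermite interpolant against exactly these data, $(\I{r+1}{k+2}f)(t_{n,i}) = f(t_{n,i})$ at the inner points, and $(\I{r+1}{k+2}f)^{(i)}$ matches $f^{(i)}$ at $t_n^-$ for $i \le \lfloor\tfrac{k}{2}\rfloor$ and at $t_{n-1}^+$ for $i \le \lfloor\tfrac{k-1}{2}\rfloor$. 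Substituting these identities into the collocation relations from Step~1 turns them into precisely the collocation problem~\eqref{eq:collocation} for parameters $(r+1,k+2)$ with the \emph{original} right-hand side $f - A\,\cdot\,$ — i.e.\ the point conditions~\eqref{eq:collocationE} now run $i = 0,\dots,\lfloor\tfrac{k+2}{2}\rfloor - 1$, \eqref{eq:collocationA} runs $i = 0,\dots,\lfloor\tfrac{(k+2)-1}{2}\rfloor - 1$, and the inner collocation~\eqref{eq:collocationI} holds at the $(r+1)-(k+2)$ points $t_{n,i}$, which is exactly what $\Q{r+1}{k+2}$-$\vtd{r+1}{k+2}(f)$ demands after Theorem~\ref{th:equiCollocation}. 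One must also check the initial/continuity data: $\widetilde{U}(t_0^-) = U(t_0^-) = u_0$ by construction, and the continuity conditions at interior nodes are inherited since $\widetilde{U} \in \Y{r+1}$ as in Theorem~\ref{th:postproc}.

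\textbf{Step 3: conclude.} Having shown $\widetilde{U}$ solves the collocation method~\eqref{eq:collocation} for $(r+1,k+2)$ with right-hand side $f$, apply Theorem~\ref{th:equiCollocation} once more (now with $r \rightsquigarrow r+1$, $k \rightsquigarrow k+2$, $l = k+2$, which needs $0 \le k+2 \le r+1$, true since $k \le r-1$) to deduce that $\widetilde{U}$ solves $\Q{r+1}{k+2}$-$\vtd{r+1}{k+2}(f)$. I expect the main obstacle to be the bookkeeping in Step~2 — carefully matching the floor-function indices of the two interpolation schemes and making sure the Hermite interpolant of $f$ really agrees with $f$ (and its relevant derivatives) at \emph{all} points that appear in the $(r+1,k+2)$ collocation conditions, including the subtle point that $\Q{r}{k}$-$\vtd{r}{k}$ originally postulates the variational condition only for $\varphi \in P_{r-k}$ whereas here we exploit, as in the proof of Theorem~\ref{th:postproc}~\eqref{enu:3}, that it actually holds on the full $P_{r-k}$ so that the equivalence theorem applies cleanly. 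An alternative, more self-contained route would be to verify directly that $\widetilde{U}$ satisfies each defining condition of $\Q{r+1}{k+2}$-$\vtd{r+1}{k+2}(f)$, reusing the computations (i)--(v) in the proof of Theorem~\ref{th:postproc} verbatim but with $f$ in place of $g$ at the newly exact points; this avoids invoking Theorem~\ref{th:equiCollocation} twice but duplicates more algebra.
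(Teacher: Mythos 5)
Your Step 1 is sound and coincides with the paper's first move: postprocessing gives that $\widetilde{U}$ solves $\Q{r}{k}$-$\vtd{r+1}{k+2}\big(\I{r+1}{k+2}f\big)$, and via Theorem~\ref{th:equiCollocation} (with right-hand side $\I{r+1}{k+2}f-A\,\cdot$) one may read this as collocation at the quadrature points of $\Q{r}{k}$. The argument breaks in Step 2, however, because the quadrature points of $\Q{r}{k}$ and $\Q{r+1}{k+2}$ do \emph{not} coincide up to extra endpoint derivatives. By~\eqref{quadrule:Qpp}, the interior nodes of $\Q{r+1}{k+2}$ are the zeros of the $\big((r+1)-(k+2)\big)$th, i.e.\ $(r-k-1)$th, Jacobi polynomial with weight $(1+\hat t)^{\lfloor\frac{k-1}{2}\rfloor+2}(1-\hat t)^{\lfloor\frac{k}{2}\rfloor+2}$, whereas $\Q{r}{k}$ uses the $r-k$ zeros of the $(r-k)$th Jacobi polynomial with exponents $\lfloor\frac{k-1}{2}\rfloor+1$ and $\lfloor\frac{k}{2}\rfloor+1$; your identity ``$(r-k)=(r+1)-(k+2)$'' and the claim of ``the same weight'' are both incorrect. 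So the collocation relations you obtain in Step 1 live at the wrong interior points, and they also lack the derivative conditions of order $\lfloor\frac{k}{2}\rfloor+1$ at $t_n^-$ and $\lfloor\frac{k-1}{2}\rfloor+1$ at $t_{n-1}^+$ that the $(r+1,k+2)$ collocation problem~\eqref{eq:collocation} demands. Indeed $\widetilde{U}\in P_{r+1}(I_n,\RR^d)$ cannot in general solve that collocation problem: its solution is the \emph{next} member of the cascade, a polynomial of degree $r+2$ obtained only after a further postprocessing step. Step 3 compounds this: Theorem~\ref{th:equiCollocation} with $r\rightsquigarrow r+1$, $k\rightsquigarrow k+2$ characterizes $\Q{r+1}{k+2}$-$\vtd{r+2}{l}$ (trial space $P_{r+2}$), not the target method $\Q{r+1}{k+2}$-$\vtd{r+1}{k+2}$ whose trial degree equals the quadrature index; the equivalence theorem never identifies a method of $\Q{r}{k}$-$\vtd{r}{k}$ type with a collocation scheme, so even a repaired Step 2 would not deliver the conclusion.

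The missing idea is a quadrature-exactness argument on the variational condition itself, which is how the paper proceeds after your Step 1. In the point conditions of $\Q{r}{k}$-$\vtd{r+1}{k+2}\big(\I{r+1}{k+2}f\big)$ the operator $\I{r+1}{k+2}$ preserves every value and derivative that occurs, so it may simply be dropped. In the variational condition the test functions lie in $P_{r-k-1}(I_n,\RR^d)$, hence all integrands $\big(M\widetilde{U}'+A\widetilde{U}-\I{r+1}{k+2}f,\varphi\big)$ are polynomials of degree at most $2r-k$; since both $\Q{r}{k}$ and $\Q{r+1}{k+2}$ are exact to degree $2r-k=2(r+1)-(k+2)$, one may pass from $\Q{r}{k}$ through the exact integral to $\Q{r+1}{k+2}$, and finally replace $\I{r+1}{k+2}f$ by $f$ because $\Q{r+1}{k+2}$ samples only values and derivatives at its own nodes, where the Hermite interpolant $\I{r+1}{k+2}f$ agrees with $f$ by construction. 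Your closing remark about a ``more self-contained route'' gestures toward this, but as written the proposal nowhere contains the degree count and the two-fold exactness step that actually carry the proof.
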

\begin{proof}
First of all, note that we also could use $\I{r+1}{k+2}f$ instead of $f$ in
the definition of the correction vector $a_n \in \RR^d$ for the
postprocessing, see~\eqref{eq:an}, since $\I{r+1}{k+2}$ preserves all
occurring point values.  Hence, postprocessing $U$ as in
Section~\ref{sec:postprocessing} yields a function $\widetilde{U} \in
\Y{r+1}$ which solves $\Q{r}{k}$-$\vtd{r+1}{k+2}\big(\I{r+1}{k+2}f\big)$,
i.e., $\widetilde{U} \in P_{r+1}(I_n,\RR^d)$ satisfies locally (on $I_n$)
\begin{subequations}
\begin{align}
\widetilde{U}(t_{n-1}^+)
& = \widetilde{U}(t_{n-1}^-), \\
M\widetilde{U}^{(i+1)}(t_n^-)
& = (\I{r+1}{k+2}f)^{(i)}(t_n^-)-A\widetilde{U}^{(i)}(t_n^-),
&& i = 0, \ldots, \left\lfloor\tfrac{k}{2}\right\rfloor\!,
\label{eq:linlocProbIntEPost} \\
M\widetilde{U}^{(i+1)}(t_{n-1}^+)
& = (\I{r+1}{k+2}f)^{(i)}(t_{n-1}^+)-A\widetilde{U}^{(i)}(t_{n-1}^+),
&& \text{if } k \geq 1, \, i = 0, \ldots, \left\lfloor\tfrac{k-1}{2}\right\rfloor\!,
\label{eq:linlocProbIntAPost}
\intertext{and}
\label{eq:linlocProbIntVarPost}
\Q{r}{k} \Big[ \big(M\widetilde{U}',\varphi\big)\Big]
& = \Q{r}{k} \Big[ \big(\I{r+1}{k+2}f - A\widetilde{U}, \varphi\big)\Big]
&& \forall \varphi\in P_{r-k-1}(I_n,\RR^d)
\end{align}
\end{subequations}
where $\widetilde{U}(t_0^-) = u_0$.

Also in~\eqref{eq:linlocProbIntEPost} and~\eqref{eq:linlocProbIntAPost}
the interpolation operator $\I{r+1}{k+2}$
preserves all occurring point values and therefore could be dropped. Moreover, we see
that only polynomials of maximal degree $2r-k$ appear in~\eqref{eq:linlocProbIntVarPost}.
Since both quadrature rules $\Q{r}{k}$ and $\Q{r+1}{k+2}$ are exact in this case
and the interpolation operator $\I{r+1}{k+2}$ uses the quadrature points of $\Q{r+1}{k+2}$, we obtain that
\begin{align*}
& \Q{r}{k}\Big[\big(M\widetilde{U}'+A\widetilde{U}-\I{r+1}{k+2} f,\varphi\big)\Big]
= \int_{I_n} \!\big(M\widetilde{U}'(t)+A\widetilde{U}(t)-\I{r+1}{k+2} f(t),\varphi(t)\big) \, \d t \\
& \quad = \Q{r+1}{k+2}\Big[\big(M\widetilde{U}'+A\widetilde{U}-\I{r+1}{k+2} f,\varphi\big)\Big]
= \Q{r+1}{k+2}\Big[\big(M\widetilde{U}'+A\widetilde{U}-f,\varphi\big)\Big]
\qquad \forall \varphi \in P_{r-k-1}(I_n,\RR^d).
\end{align*}
Summarizing, we have seen that the postprocessed solution $\widetilde{U}$ of
$\Q{r}{k}$-$\vtd{r}{k}\big(\I{r+1}{k+2}f\big)$ solves $\Q{r+1}{k+2}$-$\vtd{r+1}{k+2}\big(f\big)$.
\end{proof}

\begin{remark}
Within the above argument we proved that
the method $\Q{r+1}{k+2}$-$\vtd{r+1}{k+2}\big(f\big)$
and the method $\Q{r}{k}$-$\vtd{r+1}{k+2}\big(\I{r+1}{k+2} f\big)$
are equivalent for $0 \leq k \leq r-1$.

Similarly, one can show that the method $\Q{r}{k}$-$\vtd{r+1}{k+2}\big(f\big)$
and the method $\Q{r+1}{k+2}$-$\vtd{r+1}{k+2}\big(\I{r}{k} f\big)$ are
equivalent for $0 \leq k \leq r-1$.
Note that also $\I{r}{k}$ preserves all derivatives that appear
in the point conditions at both ends of the interval.
\end{remark}

\subsection{Interpolation cascade}

Having a closer look at the result of Theorem~\ref{th:modPostproc}, we see
that the postprocessed solution of the modified discrete problem also solves
a numerically integrated variational time discretization method but with the
``right'' associated quadrature rule. This enables to do one further postprocessing
step.

For $1 \leq \ell \leq r-k$, using an interpolation cascade we even could enable up to $\ell + 1$ additional
postprocessing steps. More concretely we have
\begin{align*}
& \text{$\Q{r}{k}$-$\vtd{r}{k}\big(\I{r+1}{k+2} \circ \I{r+2}{k+4} \circ \ldots \circ \I{r+\ell}{k+2\ell}f\big)$}
\leadsto \text{$\Q{r+1}{k+2}$-$\vtd{r+1}{k+2}\big(\I{r+2}{k+4} \circ \ldots \circ \I{r+\ell}{k+2\ell}f\big)$} \\
  \leadsto \quad & \ldots \\
  \leadsto \quad
& \text{$\Q{r+\ell-1}{k+2(\ell-1)}$-$\vtd{r+\ell-1}{k+2(\ell-1)}\big(\I{r+\ell}{k+2\ell}f\big)$}
\leadsto \text{$\Q{r+\ell}{k+2\ell}$-$\vtd{r+\ell}{k+2\ell}\big(f\big)$} \\
  \leadsto \quad
& \text{$\Q{r+\ell}{k+2\ell}$-$\vtd{r+\ell+1}{k+2(\ell+1)}\big(f\big)$}
\end{align*}
where $\leadsto$ denotes the postprocessing steps.
Note that $f$ itself can be used in each postprocessing step to calculate
the correction vector $a_n \in \RR^d$ (cf.~Theorem~\ref{th:postproc}) since
in each step the occurring derivative of $f$ at $t_n^-$ is preserved by the
respective interpolation cascade.  

\begin{remark}
For Dahlquist's stability equation, i.e., $d=1$, $M = 1$, $A = -\lambda$,
and $f = 0$ in~\eqref{initValueProbLinear}, we easily see that
\begin{gather*}
\vtd{r-\ell}{k-2\ell}\big(f\big) =
\Q{r-\ell}{k-2\ell}\text{-}\vtd{r-\ell}{k-2\ell}\big(f\big) = 
\Q{r-\ell}{k-2\ell}\text{-}\vtd{r-\ell}{k-2\ell}\big(\I{r-\ell+1}{k-2\ell+2} \circ \I{r-\ell+2}{k-2\ell+4} \circ \ldots \circ \I{r}{k}f\big)
\end{gather*}
for all $\ell = 0,\ldots,\left\lfloor \frac{k}{2} \right\rfloor$.
Thus, $\ell$ postprocessing steps can be applied for this equation. Since
the postprocessing does not change the function value in the end points of
the intervals, the stability function does not change either.
Therefore, $\vtd{r}{k}$ as well as $\Q{r}{k}\text{-}\vtd{r}{k}$ provide
the same stability function as $\vtd{r-\ell}{k-2\ell}$. With the special
choice $\ell = \left\lfloor \frac{k}{2} \right\rfloor$, we immediately
find that $\vtd{r}{k}$ shares its stability properties with
\begin{gather*}
\vtd{r-\left\lfloor \frac{k}{2} \right\rfloor}{k-2\left\lfloor \frac{k}{2}
\right\rfloor} = 
\begin{cases}
\vtd{r-\left\lfloor \frac{k}{2} \right\rfloor}{0}
= \mathrm{dG}\!\left(r-\left\lfloor \frac{k}{2} \right\rfloor\right),		& \text{if $k$ is even}, \\
\vtd{r-\left\lfloor \frac{k}{2} \right\rfloor}{1}
= \mathrm{cGP}\!\left(r-\left\lfloor \frac{k}{2} \right\rfloor\right),
& \text{if $k$ is odd}.
\end{cases}
\end{gather*}
Hence, all methods with even $k$ share their strong A-stability with the dG method while methods
with odd $k$ are A-stable as the cGP method, cf.~Remark~\ref{rem:dGcGP} and~\cite{ABBM19}.
\end{remark}

\begin{remark}
Analogously to Theorem~\ref{th:postproc} also a postprocessing
from $\Q{r}{k}$-$\vtd{r}{k}(g)$ to $\Q{r}{k}$-$\vtd{r+1}{k+2}(g)$
can be proven when instead of~\eqref{eq:an} the correction vector
is determined by
\begin{equation*}
\check{a}_n = M^{-1} \left(
g^{\left(\left\lfloor \frac{k}{2} \right\rfloor\right)}(t_n^-)
- A U^{\left(\left\lfloor \frac{k}{2} \right\rfloor\right)}(t_n^-)
- M U^{\left(\left\lfloor \frac{k}{2} \right\rfloor+1\right)}(t_n^-)\right)\!.
\end{equation*}
However, when $g$ and its derivatives are not globally continuous up to a
sufficiently high order, in general the discrete solution of $\Q{r}{k}$-$\vtd{r}{k}(g)$
is not $\lfloor \frac{k-1}{2} \rfloor$-times continuously differentiable.
Therefore, the postprocessing by jumps and the postprocessing by
(modified) residuals will not provide the same correction anymore.

A more detailed analysis shows that applying two postprocessing steps
based on residuals on the solution of $\Q{r}{k}$-$\vtd{r}{k}(f)$ yields
the solution of $\Q{r+2}{k+4}$-$\vtd{r+2}{k+4}(\If^{r+1,*} f)$ where
$\If^{r+1,*} f$ interpolates $f$ in the quadrature points of $\Q{r}{k}$
and additionally preserves its ($\lfloor \frac{k}{2} \rfloor +1$)th
derivative in $t_n^-$. Similarly for dG-like methods (characterized by
even $k$) we find that applying two postprocessing steps based on jumps
on the solution of $\Q{r}{k}$-$\vtd{r}{k}(f)$ gives the solution of
$\Q{r+2}{k+4}$-$\vtd{r+2}{k+4}(\If^{r+1}_* f)$ where $\If^{r+1}_* f$
interpolates $f$ in the quadrature points of $\Q{r}{k}$ and additionally
preserves its ($\lfloor \frac{k-1}{2} \rfloor +1$)th derivative
in $t_{n-1}^+$.
\end{remark}

\section{Derivatives of solutions}
\label{sec:nestedSol_Der}

As in Section~\ref{sec:intCascade} we consider affine linear problems
of the form~\eqref{initValueProbLinear} with time-independent coefficients.
Since the quadrature formula $\Q{r}{k}$ is exact for polynomials up to degree $2r-k$
and the associated interpolation operator $\I{r}{k}$ yields polynomials of degree $r$, we can
write both $\vtd{r}{k}\big(f\big)$ and $\Q{r}{k}$-$\vtd{r}{k}\big(f\big)$ for $0 \leq k \leq r$ in
the form
\medskip

Find $U \in P_{r}(I_n,\RR^d)$ for all $n=1,\ldots,N$ such that
\begin{subequations}
\label{eq:nested}
\begin{align}
U(t_{n-1}^+)
& = U(t_{n-1}^-),
&& \text{if } k \geq 1, \label{eq:nestedCont} \\
M U^{(i+1)}(t_{n}^-)
& = (\If f)^{(i)}(t_{n}^-) - A U^{(i)}(t_{n}^-),
&& \text{if } k \geq 2, \, i = 0,\ldots,\left\lfloor \tfrac{k}{2} \right\rfloor -1, \label{eq:nestedE} \\
M U^{(i+1)}(t_{n-1}^+)
& = (\If f)^{(i)}(t_{n-1}^+) - A U^{(i)}(t_{n-1}^+),
&& \text{if } k \geq 3, \, i = 0,\ldots,\left\lfloor \tfrac{k-1}{2} \right\rfloor -1, \label{eq:nestedA}
\end{align}
as well as
\begin{gather}
\int_{I_n} \big(M U'+AU,\varphi\big) \,\d t + \delta_{0,k} \big(M \big[U\big]_{n-1}, \varphi(t_{n-1}^+)\big)
= \int_{I_n} \big(\If f,\varphi\big) \,\d t
\quad \forall \varphi \in P_{r-k}(I_n,\RR^d) \label{eq:nestedVar}
\end{gather}
\end{subequations}
with $U(t_0^-) = u_0$
where $\If = \mathrm{Id}$ for $\vtd{r}{k}\big(f\big)$ and $\If = \I{r}{k}$ for $\Q{r}{k}$-$\vtd{r}{k}\big(f\big)$,
respectively. Note that it is still ensured that $U$ is
$\left\lfloor \frac{k-1}{2} \right\rfloor$-times continuously differentiable.
Moreover, since the operator $\If \in \{\Id,\I{r}{k}\}$ keeps at $t_n^-$ and
$t_{n-1}^+$ derivatives up to order $\left\lfloor \frac{k}{2} \right\rfloor$ and
$\left\lfloor \frac{k-1}{2} \right\rfloor$, respectively, we could drop $\If$
in~\eqref{eq:nestedE} and~\eqref{eq:nestedA}.

\begin{theorem}
\label{th:varForJthDer}
Let $r, k \in \ZZ$, $0 \leq k \leq r$, and suppose that $U \in \Y{r}$
solves $\vtd{k}{r}\big(\If f\big)$ with $\If \in \{\Id,\I{r}{k}\}$.
Then it holds
\begin{gather*}
\int_{I_n} \big(M(U^{(j)})'+AU^{(j)},\varphi\big) \,\d t
+ \delta_{0,k-2j} \big(M \big[U^{(j)}\big]_{n-1}, \varphi(t_{n-1}^+)\big)
= \int_{I_n} \big((\If f)^{(j)},\varphi\big)\, \d t
\end{gather*}
for all $0 \leq j \leq \left\lfloor \frac{k}{2} \right\rfloor$ and all
$\varphi \in P_{r-k+j}(I_n,\RR^d)$. Note that the integrals can be replaced
for $\If = \I{r}{k}$ by any quadrature rule which is exact for polynomials of degree less
than or equal to $2r-k$, for example by $\Q{r-j}{k-2j}$.
\end{theorem}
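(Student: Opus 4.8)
The plan is to reduce the whole statement to a single integration-by-parts identity for the residual of $U$. Fix $n$ and set $\rho := MU' + AU - \If f$ on $I_n$. Since $M$ and $A$ are time-independent, $M(U^{(j)})' + AU^{(j)} - (\If f)^{(j)} = \rho^{(j)}$, so the asserted identity for $U^{(j)}$ is equivalent to
\[
\int_{I_n}\big(\rho^{(j)},\varphi\big)\,\d t = -\,\delta_{0,k-2j}\,\big(M\big[U^{(j)}\big]_{n-1},\varphi(t_{n-1}^+)\big)\qquad\forall\,\varphi\in P_{r-k+j}(I_n,\RR^d).
\]
For $j=0$ this is precisely~\eqref{eq:nestedVar}, so I may assume $j\ge 1$, whence $k\ge 2j\ge 2$ and $\delta_{0,k}=0$. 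From~\eqref{eq:nestedE} and~\eqref{eq:nestedA} the residual vanishes to high order at the interval ends, namely $\rho^{(i)}(t_n^-)=0$ for $0\le i\le\lfloor k/2\rfloor-1$ and $\rho^{(i)}(t_{n-1}^+)=0$ for $0\le i\le\lfloor(k-1)/2\rfloor-1$; moreover~\eqref{eq:nestedVar} now reads $\int_{I_n}(\rho,\psi)\,\d t=0$ for all $\psi\in P_{r-k}(I_n,\RR^d)$.

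Next I would integrate $\int_{I_n}(\rho^{(j)},\varphi)\,\d t$ by parts $j$ times. The interior term $(-1)^j\int_{I_n}(\rho,\varphi^{(j)})\,\d t$ vanishes because $\varphi^{(j)}\in P_{r-k}(I_n,\RR^d)$ and $\rho$ is orthogonal to $P_{r-k}$. In the boundary contributions $\sum_{m=0}^{j-1}(-1)^m(\rho^{(j-1-m)},\varphi^{(m)})\big|_{t_{n-1}^+}^{t_n^-}$, every term at $t_n^-$ vanishes since $j\le\lfloor k/2\rfloor$; and an index count, using that $\lfloor k/2\rfloor-\lfloor(k-1)/2\rfloor\le 1$, shows that every term at $t_{n-1}^+$ vanishes too, \emph{except} when $k$ is even and $j=k/2$ — i.e. precisely when $k-2j=0$. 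In that single case exactly the $m=0$ term survives and one is left with
\[
\int_{I_n}\big(\rho^{(j)},\varphi\big)\,\d t = -\big(\rho^{(j-1)}(t_{n-1}^+),\varphi(t_{n-1}^+)\big).
\]

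It thus remains, in the case $k$ even and $j=k/2$, to identify $\rho^{(j-1)}(t_{n-1}^+)$ with $M\big[U^{(j)}\big]_{n-1}$. Expanding $\rho^{(j-1)}(t_{n-1}^+)=MU^{(j)}(t_{n-1}^+)+AU^{(j-1)}(t_{n-1}^+)-(\If f)^{(j-1)}(t_{n-1}^+)$, this amounts to $MU^{(j)}(t_{n-1}^-)=(\If f)^{(j-1)}(t_{n-1}^+)-AU^{(j-1)}(t_{n-1}^+)$. For $n\ge 2$ I would combine three facts: $U\in C^{\lfloor(k-1)/2\rfloor}=C^{j-1}$, so $U^{(j-1)}$ is continuous at $t_{n-1}$; $(\If f)^{(j-1)}$ is continuous at $t_{n-1}$ (immediate for $\If=\Id$, and for $\If=\I{r}{k}$ since $j-1=\lfloor(k-1)/2\rfloor\le\lfloor k/2\rfloor$, so the Hermite interpolant reproduces $f^{(j-1)}(t_{n-1})$ from either side); and~\eqref{eq:nestedE} on $I_{n-1}$ with $i=j-1$. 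For $n=1$ the same identity follows from $U^{(i)}(t_0^-)=u^{(i)}(t_0)$, the initial conditions~\eqref{def:globalDiscI}, and the recursion~\eqref{eq:IC}, which in the affine-linear time-independent case gives $Mu^{(j)}(t_0)=f^{(j-1)}(t_0)-Au^{(j-1)}(t_0)$.

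For the concluding remark: when $\If=\I{r}{k}$ both $M(U^{(j)})'+AU^{(j)}$ and $(\If f)^{(j)}$ are polynomials of degree at most $r-j$, so tested against $\varphi\in P_{r-k+j}(I_n,\RR^d)$ the integrands are polynomials of degree at most $2r-k$; hence any quadrature exact up to degree $2r-k$ returns the same values — in particular $\Q{r-j}{k-2j}$, whose exactness degree is $2(r-j)-(k-2j)=2r-k$. The only genuinely delicate point in the argument is the endpoint bookkeeping in the $k$ even, $j=k/2$ case together with its separate treatment of the first subinterval via~\eqref{eq:IC}; the remainder is routine differentiation, integration by parts, and polynomial degree counting.
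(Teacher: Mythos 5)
Your proof is correct and follows essentially the same route as the paper: $j$-fold integration by parts, the variational condition killing the interior term, the point conditions killing the boundary terms at $t_n^-$ and (except when $k-2j=0$) at $t_{n-1}^+$, and the leftover left-endpoint term identified with $M\big[U^{(j)}\big]_{n-1}$ via continuity of $U^{(j-1)}$ and $(\If f)^{(j-1)}$, the point condition on $I_{n-1}$, and~\eqref{eq:IC} for $n=1$. Phrasing everything through the residual $\rho=MU'+AU-\If f$ is only a cosmetic repackaging of the paper's argument.
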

\begin{proof}
First of all, we consider the case $0 \leq j \leq \left\lfloor \frac{k-1}{2} \right\rfloor$.
Integrating by parts several times and using~\eqref{eq:nested}, we gain for
any $\varphi \in P_{r-k+j}(I_n,\RR^d)$
\begin{align*}
& \int_{I_n} \big(M(U^{(j)})'+AU^{(j)},\varphi\big) \,\d t
= \int_{I_n} \big(\partial_t^j(MU'+AU),\varphi\big) \,\d t \\
& \quad = - \int_{I_n} \big(\partial_t^{j-1}(MU'+AU),\varphi'\big) \,\d t 
+ \big[\big(\partial_t^{j-1}(MU'+AU),\varphi\big)\big]_{t_{n-1}^+}^{t_n^-} \\
& \quad = \int_{I_n} \big(\partial_t^{j-2}(MU'+AU),\varphi''\big) \,\d t
- \big[\big(\partial_t^{j-2}(MU'+AU),\varphi'\big)\big]_{t_{n-1}^+}^{t_n^-}
+ \big[\big(\partial_t^{j-1}(MU'+AU),\varphi\big)\big]_{t_{n-1}^+}^{t_n^-} \\
& \quad = \ldots \\
& \quad = (-1)^j \int_{I_n} \big(MU'+AU,\underbrace{\varphi^{(j)}}_{\in P_{r-k}(I_n,\RR^d)}\big) \,\d t
+ \sum_{l=0}^{j-1} (-1)^l \big[\big(\partial_t^{j-1-l}(MU'+AU),
\varphi^{(l)}\big)\big]_{t_{n-1}^+}^{t_n^-} \\
& \quad = (-1)^j \int_{I_n} \big(\If f,\varphi^{(j)}\big) \,\d t
+ \sum_{l=0}^{j-1} (-1)^l \big[\big((\If f)^{(j-1-l)},\varphi^{(l)}\big)\big]_{t_{n-1}^+}^{t_n^-} \\
& \quad = \ldots = \int_{I_n} \big((\If f)^{(j)},\varphi\big)\, \d t
\end{align*}
which is the desired statement. So, for odd $k$ we are done due to
$\left\lfloor\frac{k-1}{2}\right\rfloor = \left\lfloor\frac{k}{2}\right\rfloor$
in this case.

Hence, it only remains to study the case $j=\left\lfloor \frac{k}{2} \right\rfloor$ for even $k\ge 2$.
Similar as above
we conclude from~\eqref{eq:nested} for any $\varphi \in P_{r-k+j}(I_n,\RR^d)$ that
\begin{align*}
& \int_{I_n} \big(M(U^{(j)})'+AU^{(j)},\varphi\big) \,\d t \\
& \quad = (-1)^j \int_{I_n} \big(MU'+AU,\varphi^{(j)}\big) \,\d t
+ \sum_{l=0}^{j-1} (-1)^l \big[\big(\partial_t^{j-1-l}(MU'+AU),
\varphi^{(l)}\big)\big]_{t_{n-1}^+}^{t_n^-} \\
& \quad = (-1)^j \int_{I_n} \big(\If f,\varphi^{(j)}\big) \,\d t
+ \sum_{l=0}^{j-1} (-1)^l \big[\big((\If f)^{(j-1-l)},\varphi^{(l)}\big)\big]_{t_{n-1}^+}^{t_n^-} \\
& \quad \qquad - \big(MU^{(j)}(t_{n-1}^+)+AU^{(j-1)}(t_{n-1}^+),\varphi(t_{n-1}^+)\big)
+ \big((\If f)^{(j-1)}(t_{n-1}^+),\varphi(t_{n-1}^+)\big) \\
& \quad = \int_{I_n} \big((\If f)^{(j)},\varphi\big)\, \d t
- \big(MU^{(j)}(t_{n-1}^+)+AU^{(j-1)}(t_{n-1}^+),\varphi(t_{n-1}^+)\big)
+ \big((\If f)^{(j-1)}(t_{n-1}^+),\varphi(t_{n-1}^+)\big).
\end{align*}
Since point values of $\If f$ and $f$ only appear at $t_n^-$ and $t_{n-1}^+$ up
to the $(\left\lfloor \frac{k}{2} \right\rfloor -1)$th derivative which the operator
$\If \in \{\Id,\I{r}{k}\}$ preserves, we obtain,
using the continuity of $f^{(j-1)}$ and~\eqref{eq:nestedE}, for $n \geq 2$
\begin{multline*}
\big((\If f)^{(j-1)}(t_{n-1}^+),\varphi(t_{n-1}^+)\big)
= \big(f^{(j-1)}(t_{n-1}^+),\varphi(t_{n-1}^+)\big)
= \big(f^{(j-1)}(t_{n-1}^-),\varphi(t_{n-1}^+)\big) \\
= \big((\If f)^{(j-1)}(t_{n-1}^-),\varphi(t_{n-1}^+)\big)
= \big(MU^{(j)}(t_{n-1}^-)+AU^{(j-1)}(t_{n-1}^-),\varphi(t_{n-1}^+)\big).
\end{multline*}
Since also $U^{(j-1)}$ is continuous, we get
\begin{gather*}
\big(MU^{(j)}(t_{n-1}^+)+AU^{(j-1)}(t_{n-1}^+),\varphi(t_{n-1}^+)\big)
- \big((\If f)^{(j-1)}(t_{n-1}^+),\varphi(t_{n-1}^+)\big)
= \big( M\big[U^{(j)}\big]_{n-1},\varphi(t_{n-1}^+)\big)
\end{gather*}
with $\big[U^{(j)}\big]_{n-1} = U^{(j)}(t_{n-1}^+)-U^{(j)}(t_{n-1}^-)$.
Thus, we gain
\begin{align*}
& \int_{I_n} \big(M(U^{(j)})'+AU^{(j)},\varphi\big) \,\d t \\
& \quad = \int_{I_n} \big((\If f)^{(j)},\varphi\big)\, \d t
  - \begin{cases}
\big( M \big[U^{(j)}\big]_{n-1},\varphi(t_{n-1}^+)\big), & n \geq 2, \\
\big( M U^{(j)}(t_0^+)-((\If f)^{(j-1)}(t_0^+)-AU^{(j-1)}(t_0^+)),\varphi(t_0^+)\big), & n=1,
\end{cases}
\end{align*}
for all $\varphi \in P_{r-k+j}(I_n,\RR^d)$. Recalling the definitions
of $U^{(j)}(t_0^-)$ and $u^{(j)}(t_0)$ we find
\begin{align*}
(\If f)^{(j-1)}(t_0^+)-AU^{(j-1)}(t_0^+)
  & = f^{(j-1)}(t_0^+)-A U^{(j-1)}(t_0^-) \\
  & = f^{(j-1)}(t_0^+)-A u^{(j-1)}(t_0)
= M u^{(j)}(t_0) = M U^{(j)}(t_0^-)
\end{align*}
which completes the proof.
\end{proof}

Using the appropriate initial condition, derivatives of $\mathbf{VTD}$
solutions are themselves solutions of $\mathbf{VTD}$ methods.
\begin{corollary}
Let $r, k \in \ZZ$, $0 \leq k \leq r$, and suppose that $U \in \Y{r}$
solves $\vtd{r}{k}\big(\If f\big)$ where $\If \in \{\mathrm{Id}, \I{r}{k}\}$.
Then $U^{(j)} \in \Y{r-j}$, $0 \leq j \leq \left\lfloor \frac{k}{2} \right\rfloor$, solves
$\vtd{r-j}{k-2j}\big((\If f)^{(j)}\big)$ if $u^{(j)}(t_0)$ is used as initial condition.
\end{corollary}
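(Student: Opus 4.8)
The plan is to verify, item by item, that $U^{(j)}$ meets the defining conditions of $\vtd{r-j}{k-2j}\big((\If f)^{(j)}\big)$, translating parameters through the elementary identities $\left\lfloor\tfrac{k-2j}{2}\right\rfloor = \left\lfloor\tfrac{k}{2}\right\rfloor - j$, $\left\lfloor\tfrac{k-2j-1}{2}\right\rfloor = \left\lfloor\tfrac{k-1}{2}\right\rfloor - j$, and $(r-j)-(k-2j) = r-k+j$, so that the test space of the reduced method is exactly $P_{r-k+j}$. The variational condition is then nothing but the statement of Theorem~\ref{th:varForJthDer}: for all $\varphi\in P_{r-k+j}(I_n,\RR^d)$,
\[
\int_{I_n}\big(M(U^{(j)})'+AU^{(j)},\varphi\big)\,\d t
+ \delta_{0,k-2j}\big(M[U^{(j)}]_{n-1},\varphi(t_{n-1}^+)\big)
= \int_{I_n}\big((\If f)^{(j)},\varphi\big)\,\d t,
\]
and one checks that the Kronecker delta carried by the jump term is already the right one.

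For the point conditions I would simply shift the index. The condition $M(U^{(j)})^{(i+1)}(t_n^-) = (\If f)^{(j+i)}(t_n^-) - A(U^{(j)})^{(i)}(t_n^-)$, required for $i = 0,\dots,\left\lfloor\tfrac{k}{2}\right\rfloor - j - 1$, becomes upon setting $i'=i+j$ exactly~\eqref{eq:nestedE} for the indices $i' = j,\dots,\left\lfloor\tfrac{k}{2}\right\rfloor-1$, a subrange of those for which~\eqref{eq:nestedE} holds; the conditions at $t_{n-1}^+$ follow the same way from~\eqref{eq:nestedA}. For the continuity requirement $U^{(j)}(t_{n-1}^+) = U^{(j)}(t_{n-1}^-)$, which is present only when $k-2j\ge 1$, I would invoke that the solution of $\vtd{r}{k}$ is $\left\lfloor\tfrac{k-1}{2}\right\rfloor$-times continuously differentiable (as noted after~\eqref{def:globalDisc}); since $k-2j\ge 1$ forces $j\le\left\lfloor\tfrac{k-1}{2}\right\rfloor$, the derivative $U^{(j)}$ is indeed globally continuous.

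It remains to match the initial condition. When $k-2j$ is odd the reduced method has no jump term, so its initial value enters through $U^{(j)}(t_0^+) = U^{(j)}(t_0^-)$; here I would use that $U^{(i)}(t_0^+) = u^{(i)}(t_0)$ for $0\le i\le\left\lfloor\tfrac{k-1}{2}\right\rfloor$, which follows by induction from~\eqref{eq:nestedA} at $t_0^+$ together with the recursive definition~\eqref{eq:IC} — exactly as recorded in the proof of Proposition~\ref{prop:altern_an} — so that prescribing $u^{(j)}(t_0)$ as the initial value of $U^{(j)}$ is consistent. For even $k$ with $j=\left\lfloor\tfrac{k}{2}\right\rfloor$ (the sole case $k-2j=0$) the initial value enters through the $n=1$ instance of the variational identity, and that case is already settled at the end of the proof of Theorem~\ref{th:varForJthDer}, where the boundary term at $t_0$ was shown to collapse to $\big(MU^{(j)}(t_0^-),\varphi(t_0^+)\big)$ under the convention $U^{(j)}(t_0^-) = u^{(j)}(t_0)$.

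I expect no real obstacle: the argument is essentially transcription of Theorem~\ref{th:varForJthDer} plus the structural facts gathered in Section~\ref{sec:nestedSol_Der}. The only mildly delicate point is the bookkeeping of the floor functions and the verification that $\If\in\{\Id,\I{r}{k}\}$ preserves the point values appearing in the reduced method (derivatives up to order $\left\lfloor\tfrac{k}{2}\right\rfloor$ at $t_n^-$ and up to order $\left\lfloor\tfrac{k-1}{2}\right\rfloor$ at $t_{n-1}^+$), which is precisely what makes $(\If f)^{(j)}$ a legitimate data function for $\vtd{r-j}{k-2j}$; collecting all of the above then yields the claim.
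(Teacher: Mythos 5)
Your argument is correct and follows essentially the same route as the paper's proof: the variational identity is exactly Theorem~\ref{th:varForJthDer}, while the continuity and point conditions of the reduced method come from the global $C^{\left\lfloor\frac{k-1}{2}\right\rfloor}$-regularity of $U$ together with~\eqref{eq:nestedCont}, \eqref{eq:nestedE}, and~\eqref{eq:nestedA} via the index shift $i\mapsto i+j$. Your explicit bookkeeping of the floor functions and of the initial data (using $U^{(i)}(t_0^+)=u^{(i)}(t_0)$ as in Proposition~\ref{prop:altern_an}, and the $n=1$ boundary term of Theorem~\ref{th:varForJthDer} when $k-2j=0$) only spells out details the paper leaves implicit.
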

\begin{proof}
Because of Theorem~\ref{th:varForJthDer} it only remains to prove the needed
conditions at $t_{n-1}^+$ and $t_n^-$. Since we have by construction that $U$
is $\left\lfloor \frac{k-1}{2} \right\rfloor$-times continuously differentiable, the
desired identities follow from the fact that $U^{(j)}$ is continuous together
with~\eqref{eq:nestedCont}, \eqref{eq:nestedE}, and~\eqref{eq:nestedA}.
\end{proof}

\section{Numerical experiments}
\label{sec:NumExp}
We will present in this section some numerical tests supporting the
theoretical results. All calculations were carried out using the software
Julia~\cite{julia} using the floating point data type \texttt{BigFloat}
with 512 bits.

\begin{example}
\label{Ex1}
We consider the initial value problem
\begin{equation*}
	\begin{pmatrix}u_1'(t)\\u_2'(t)\end{pmatrix}
	= \begin{pmatrix} -u_1^2(t)-u_2(t)\\ u_1(t)-u_1(t)u_2(t)\end{pmatrix}\!,
	\quad t\in(0,32),\qquad 
	u(0) = \begin{pmatrix} 1/2 \\ 0\end{pmatrix}\!,
\end{equation*}
of a system of nonlinear ordinary differential equations which has
\[
u_1(t) = \frac{\cos t}{2+\sin t},\qquad u_2(t) = \frac{\sin t}{2+\sin t}
\]
as solution.
\end{example}

The appearing nonlinear systems within each time step were solved by
Newton's method where we applied a Taylor expansion of the inherited data
from the previous time interval to calculate an initial guess for all
unknowns on the current interval. If higher order derivatives were needed
at initial time $t=0$, the ode system and its temporal derivatives were
used, see~\eqref{eq:IC}. The postprocessing used the jumps of the
derivatives, as given in Corollary~\ref{cor:postprocAlt}.

We denote by
\[
e := u-U, \qquad \tilde{e} := u-\widetilde{U}
\]
the error of the solution $U$ and the error of the postprocessed solution
$\widetilde{U}$, respectively. Errors were measured in the norms
\[
\|\varphi\|_{L^2} := \left(\int_{t_0}^{t_N} \|\varphi(t)\|^2\,\d t\right)^{\!1/2},\qquad
\|\varphi\|_{\ell^\infty} := \max_{1\le n\le N} \|\varphi(t_n^-)\|
\]
where $\|\cdot\|$ denotes the Euclidean norm in $\RR^d$.

\begin{table}[htb!]
\caption{Example~\ref{Ex1}: Results for $\Q{6}{0}$-$\vtd{6}{0}=\mathrm{dG}(6)$.
\label{Ex1:vtd60}}
\centerline{%
\begin{tabular}{rccccccc}
\toprule
N
& $\|e\|_{L^2}$ & $\|e\|_{\ell^\infty}$ & $\|\tilde{e}\|_{L^2}$
& $\|e'\|_{L^2}$ & $\|e'\|_{\ell^\infty}$ & $\|\tilde{e}'\|_{L^2}$ & $\|\tilde{e}'\|_{\ell^\infty}$ \\
\midrule
 128 & 3.3024-09 & 1.0930-17 & 2.4964-10 & 4.8620-07 & 2.2496-07 & 1.9306-08 & 1.2577-17\\
 256 & 2.6073-11 & 1.3846-21 & 9.8983-13 & 7.6991-09 & 3.5726-09 & 1.5313-10 & 1.5217-21\\
 512 & 2.0424-13 & 1.6851-25 & 3.8808-15 & 1.2070-10 & 5.6046-11 & 1.2008-12 & 1.8512-25\\
1024 & 1.5967-15 & 2.0544-29 & 1.5174-17 & 1.8876-12 & 8.7659-13 & 9.3902-15 & 2.2580-29\\
2048 & 1.2476-17 & 2.5064-33 & 5.9286-20 & 2.9500-14 & 1.3700-14 & 7.3378-17 & 2.7557-33\\
4096 & 9.7473-20 & 3.0587-37 & 2.3160-22 & 4.6096-16 & 2.1408-16 & 5.7330-19 & 3.3631-37\\
8192 & 7.6151-22 & 3.7333-41 & 9.0469-25 & 7.2025-18 & 3.3450-18 & 4.4790-21 & 4.1049-41\\
\midrule
eoc  & 7.00      & 13.00     & 8.00      & 6.00      & 6.00      & 7.00      &13.00\\
theo & 7         & 13        & 8         & 6         & 6         & 7         &13   \\
\bottomrule
\end{tabular}
}
\end{table}
Table~\ref{Ex1:vtd60} presents the results for $\Q{6}{0}$-$\vtd{6}{0}$ which
is just $\mathrm{dG}(6)$ with numerical quadrature by the right-sided
Gauss--Radau formula with $7$ points. We show norms of the error between
the solution $u$ and the discrete solution $U$ as well as the error between
the solution $u$ and the postprocessed discrete solution $\widetilde{U}$ in
different norms. Using the results for $N=4096$ and $N=8192$, the
experimental order of convergence (eoc) is calculated. In addition, the
theoretically predicted convergence orders (theo) are given. We see clearly
from Table~\ref{Ex1:vtd60} that the experimental orders of convergence
coincide with the theoretical predictions. This holds for the function
itself and its time derivative. Moreover, the order of convergence
increases by $1$ if one postprocessing step is applied. It is noteworthy
that the error norm $\|\tilde{e}'\|_{\ell^{\infty}}$ shows the same high
order superconvergence order as $\|e\|_{\ell^{\infty}}$. This behavior is
due to the collocation conditions satisfied by the postprocessed solution
$\widetilde{U}$.

\begin{table}[htb!]
\caption{Example~\ref{Ex1}: Results for $\Q{6}{5}$-$\vtd{6}{5}$.
\label{Ex1:vtd65}}
\centerline{%
\begin{tabular}{rccccccc}
\toprule
N & $\|e\|_{L^2}$ & $\|e\|_{\ell^{\infty}}$ & $\|\tilde{e}\|_{L^2}$
& $\|e'\|_{L^2}$ & $\|e'\|_{\ell^{\infty}}$ & $\|\tilde{e}'\|_{L^2}$ & $\|\tilde{e}'\|_{\ell^{\infty}}$ \\
\midrule
 128 & 3.7426-08 & 1.1561-09 & 1.2404-08 & 1.0494-06 & 1.6575-09 & 2.0501-07 & 1.6576-09\\
 256 & 2.8282-10 & 4.5523-12 & 5.0078-11 & 1.6409-08 & 6.3612-12 & 1.6318-09 & 6.3612-12\\
 512 & 2.1881-12 & 1.7984-14 & 1.9722-13 & 2.5641-10 & 2.5044-14 & 1.2807-11 & 2.5044-14\\
1024 & 1.7052-14 & 7.0168-17 & 7.7197-16 & 4.0064-12 & 9.7667-17 & 1.0017-13 & 9.7667-17\\
2048 & 1.3314-16 & 2.7452-19 & 3.0170-18 & 6.2601-14 & 3.8157-19 & 7.8282-16 & 3.8157-19\\
4096 & 1.0400-18 & 1.0722-21 & 1.1787-20 & 9.7814-16 & 1.4907-21 & 6.1162-18 & 1.4907-21\\
8192 & 8.1243-21 & 4.1884-24 & 4.6044-23 & 1.5284-17 & 5.8231-24 & 4.7784-20 & 5.8231-24\\
\midrule
eoc  & 7.00      & 8.00     & 8.00      & 6.00      & 8.00      & 7.00      &8.00\\
theo & 7         & 8        & 8         & 6         & 8         & 7         &8\\
\bottomrule
\end{tabular}
}
\end{table}
The results of our calculations using the variational time discretization
$\Q{6}{5}$-$\vtd{6}{5}$ are collected in Table~\ref{Ex1:vtd65}. Again
we present the
results in different norms for both the error itself and the error obtained
after postprocessing the discrete solution. Also for this temporal
discretization, all theoretically predicted orders of convergence are met by
our numerical experiments. Compared to the results of
$\Q{6}{0}$-$\vtd{6}{0}$ the
superconvergence order measured in $\|\cdot\|_{\ell^{\infty}}$ is much
smaller which is in agreement with our theory. In addition, the order of
convergence of $\|\tilde{e}'\|_{\ell^{\infty}}$ is the same as the order of
convergence of $\|e'\|_{\ell^{\infty}}$ since collocation conditions are
fulfilled already by the discrete solution $U$. Hence, an improvement of
this quantity by applying the postprocessing is not possible.

\begin{table}[htb!]
\caption{Example~\ref{Ex1}: Results for $\Q{6}{6}$-$\vtd{6}{6}$.
\label{Ex1:vtd66}}
\centerline{%
\begin{tabular}{rccccccc}
\toprule
N & $\|e\|_{L^2}$ & $\|e\|_{\ell^{\infty}}$ & $\|\tilde{e}\|_{L^2}$
& $\|e'\|_{L^2}$ & $\|e'\|_{\ell^{\infty}}$ & $\|\tilde{e}'\|_{L^2}$ & $\|\tilde{e}'\|_{\ell^{\infty}}$ \\
\midrule
 128 & 2.5613-07 & 9.1516-08 & 1.4889-07 & 2.6080-06 & 1.1641-07 & 9.5210-07 & 1.1641-07\\
 256 & 2.0921-09 & 7.5844-10 & 1.1839-09 & 3.8709-08 & 8.7360-10 & 7.7532-09 & 8.7350-10\\
 512 & 1.6529-11 & 5.8911-12 & 9.2953-12 & 5.9543-10 & 7.0119-12 & 6.1201-11 & 7.0119-12\\
1024 & 1.2949-13 & 5.5929-14 & 7.2702-14 & 9.2654-12 & 5.4570-14 & 4.7937-13 & 5.4570-14\\
2048 & 1.0123-15 & 3.5852-16 & 5.6810-16 & 1.4462-13 & 4.2568-16 & 3.7475-15 & 4.2568-16\\
4096 & 7.9102-18 & 2.8001-18 & 4.4384-18 & 2.2591-15 & 3.3259-18 & 2.9282-17 & 3.3259-18\\
8192 & 6.1800-20 & 2.1873-20 & 3.4674-20 & 3.5296-17 & 2.5977-20 & 2.2878-19 & 2.5977-20\\
\midrule
eoc  & 7.00      & 7.00     & 7.00      & 6.00      & 7.00      & 7.00      &7.00\\
theo & 7         & 7        & 7         & 6         & 7         & 7         &7\\
\bottomrule
\end{tabular}
}
\end{table}
Table~\ref{Ex1:vtd66} shows the results for calculations using
$\Q{6}{6}$-$\vtd{6}{6}$
as discretization in time. The presented error norms indicate that the
experimental order of convergence are in agreement with our theory. Please
note that the postprocessing does not lead to an improvement of the error
itself. However, there is an improvement if we look at the $L^2$-norm of
the time derivative. We clearly see that the order of convergence is
increased from 6 to 7 which is in agreement with
Proposition~\ref{prop:collErrorGlobal}. Moreover, there is no
superconvergence at the discrete time points, as predicted by our theory.

\begin{example}
\label{Ex2}
We consider the affine linear initial value problem
\[
\begin{pmatrix}
1 & 2\\ -1 & 3
\end{pmatrix}
\begin{pmatrix}
u_1'(t)\\u_2'(t)
\end{pmatrix}
= \begin{pmatrix}
f_1(t) \\ f_2(t)
\end{pmatrix}
- \begin{pmatrix}
1 & 2\\ 3 & 4
\end{pmatrix}
\begin{pmatrix}
u_1(t)\\ u_2(t)
\end{pmatrix}\!,
\quad t\in(0,1),
\qquad
\begin{pmatrix}
u_1(0)\\ u_2(0)
\end{pmatrix}
= \begin{pmatrix}
0\\0
\end{pmatrix}\!,
\]
where $f_1$ and $f_2$ are chosen such that
\[
u_1(t) = (t+t^2) e^t, \qquad u_2(t) = -t e^t
\]
are the solution components.
\end{example}

\begin{table}[htb!]
\caption{Example~\ref{Ex2}: Results for $\Q{7}{0}$-$\vtd{7}{0}=\mathrm{dG}(7)$ 
with cascadic interpolation of $f$
and $s$ postprocessing steps.
\label{Ex2:PP}}
\centerline{%
\begin{tabular}{ccccccccc}
\toprule
& \multicolumn{2}{c}{$\|PP_s e\|_{L^2}$} &
  \multicolumn{2}{c}{$\|(PP_s e)'\|_{L^2}$} &
  \multicolumn{2}{c}{$\|PP_s e\|_{\ell^{\infty}}$} &
  \multicolumn{2}{c}{$\|(PP_s e)'\|_{\ell^{\infty}}$}\\
\cmidrule(lr){2-3}
\cmidrule(lr){4-5}
\cmidrule(lr){6-7}
\cmidrule(lr){8-9}
$s$ & error & order & error & order & error & order & error & order\\
\cmidrule(lr){1-1}
\cmidrule(lr){2-3}
\cmidrule(lr){4-5}
\cmidrule(lr){6-7}
\cmidrule(lr){8-9}
0 & 2.3819-21 & ~8.001 & 5.3781-18 & ~7.001 & 1.4853-40 & 15.002 & 8.738-18 & ~6.983\\
1 & 2.6587-24 & ~9.001 & 2.9547-21 & ~8.001 & 1.4853-40 & 15.002 & 1.010-40 & 15.001\\
2 & 3.6813-27 & 10.001 & 3.2303-24 & ~9.001 & 1.4853-40 & 15.002 & 1.010-40 & 15.001\\
3 & 5.5669-30 & 11.001 & 4.3967-27 & 10.001 & 1.4853-40 & 15.002 & 1.010-40 & 15.001\\
4 & 9.3065-33 & 12.001 & 6.5539-30 & 11.001 & 1.4853-40 & 15.002 & 1.010-40 & 15.001\\
5 & 1.7627-35 & 13.001 & 1.0823-32 & 12.001 & 1.4853-40 & 15.002 & 1.010-40 & 15.001\\
6 & 4.1684-38 & 14.001 & 2.0285-35 & 13.001 & 1.4853-40 & 15.002 & 1.010-40 & 15.001\\
7 & 2.1520-40 & 15.001 & 4.7532-38 & 14.001 & 1.4853-40 & 15.002 & 1.010-40 & 15.001\\
8 & 7.1008-41 & 15.003 & 2.1388-40 & 15.001 & 1.4853-40 & 15.002 & 1.010-40 & 15.001\\
\bottomrule
\end{tabular}
}
\end{table}
Table~\ref{Ex2:PP} presents the results for $\Q{7}{0}$-$\vtd{7}{0}$ where
the cascadic interpolation has been applied to the function $f=(f_1,f_2)$
on the right-hand side, see Section~\ref{sec:intCascade}. We show norms of
the error
$PP_s e$ after $s$ postprocessing steps using $50$ time steps. The given
experimental orders of convergence were calculated from the results with
$25$ and $50$ time steps. Looking at the convergence orders in the
$L^2$-like norms, we clearly see that each postprocessing step increased the
experimental order of convergence by $1$ if at most $7$ postprocessing
steps are applied. The postprocessing step $8$ leads to an improvement of
the convergence order only for the temporal derivative since the function
itself already converges with the optimal order $15$. The postprocessing
has no influence to the $\ell_{\infty}$ norm of the error itself while
the very first postprocessing step improves the results for the derivative
of the error in the $\ell_{\infty}$ norm. This is caused by the fact that
the postprocessed solution fulfills a collocation condition at the discrete
time points.

\begin{table}[htb!]
\caption{Example~\ref{Ex2}: Experimental orders of convergence for $\|(PP_s e)'\|_{L^2}$
using $\Q{7}{k}$-$\vtd{7}{k}$, $k=0,\dots,7$, with cascadic interpolation of $f$,
after $s$ postprocessing steps.}
\label{Ex2:PPall}
\centerline{%
\begin{tabular}{c@{\hspace*{0.9em}}c@{\hspace*{0.9em}}c@{\hspace*{0.9em}}c@{\hspace*{0.9em}}
c@{\hspace*{0.9em}}c@{\hspace*{0.9em}}c@{\hspace*{0.9em}}c@{\hspace*{0.9em}}c@{\hspace*{0.9em}}c}
\toprule
$k$ & $s=0$  & $s=1$  & $s=2$  & $s=3$  & $s=4$  & $s=5$  & $s=6$  & $s=7$  & $s=8$\\
\midrule
0   & 7.001 & 8.001 & 9.001 & 10.001 & 11.001 & 12.001 & 13.001 & 14.001 & 15.001\\
1   & 7.000 & 8.000 & 9.000 & 10.000 & 11.000 & 12.000 & 13.000 & 14.000 &   --- \\
2   & 7.001 & 8.001 & 9.001 & 10.001 & 11.001 & 12.001 & 13.001 &   ---  &   --- \\
3   & 7.000 & 8.000 & 9.000 & 10.000 & 11.000 & 12.000 &   ---  &   ---  &   --- \\
4   & 7.001 & 8.001 & 9.001 & 10.001 & 11.001 &   ---  &   ---  &   ---  &   --- \\
5   & 7.000 & 8.000 & 9.000 & 10.000 &   ---  &   ---  &   ---  &   ---  &   --- \\
6   & 7.002 & 8.002 & 9.002 &   ---  &   ---  &   ---  &   ---  &   ---  &   --- \\
7   & 7.000 & 8.000 &  ---  &   ---  &   ---  &   ---  &   ---  &   ---  &   --- \\
\bottomrule
\end{tabular}
}
\end{table}
Table~\ref{Ex2:PPall} presents the experimental orders of convergence of
$\|(PP_s e)'\|_{L^2}$ for $\Q{7}{k}$-$\vtd{7}{k}$, $k=0,\dots,7$, after $s$
postprocessing steps where at most $r+1-k = 8-k$ steps have been applied. The
cascadic interpolation of the right-hand function $f$ is used for all
considered methods. It can be clearly seen that each additional
postprocessing step increases the convergence by one order. Using the same
number of postprocessing steps, the obtained convergence orders do not
depend on the particular methods. Since each postprocessing step is
covered by our theory and postprocessing by jumps and postprocessing by
residual are equivalent for a single step, both types of postprocessing
lead to identical results if the cascadic interpolation of the right-hand
side function $f$ is used.

\begin{table}[htb!]
\caption{Example~\ref{Ex2}: Experimental orders of convergence for $\|(PP_s e)'\|_{L^2}$
using $\Q{9}{k}$-$\vtd{9}{k}$, $k=0,\dots,9$, and $s$ postprocessing steps based
on jumps, cf.~Corollary~\ref{cor:postprocAlt}.}
\label{Ex2:Jumps}
\centerline{%
\begin{tabular}{c@{\hspace*{0.9em}}c@{\hspace*{0.9em}}c@{\hspace*{0.9em}}c@{\hspace*{0.9em}}
c@{\hspace*{0.9em}}c@{\hspace*{0.9em}}c@{\hspace*{0.9em}}c@{\hspace*{0.9em}}c@{\hspace*{0.9em}}
c@{\hspace*{0.9em}}c@{\hspace*{0.9em}}c}
\toprule
$k$ & $s=0$  & $s=1$  & $s=2$  & $s=3$  & $s=4$  & $s=5$  & $s=6$  & $s=7$  & $s=8$  & $s=9$  & $s=10$\\
\midrule
0   & 9.001 & 10.000 & 11.000 & 10.982 & 10.959 & 10.956 & 10.946 & 10.933 & 10.919 & 10.907 & 10.895\\
1   & 9.000 & 10.000 & 11.000 & ~9.996 & ~9.000 & ~7.998 & ~6.996 & ~5.993 & ~4.990 & ~3.985 &   --- \\
2   & 9.001 & 10.001 & 11.000 & 10.981 & 10.967 & 10.973 & 11.018 & 10.913 & 10.941 &   ---  &   --- \\
3   & 9.000 & 10.000 & 10.002 & ~8.998 & ~7.998 & ~6.997 & ~5.995 & ~4.992 &   ---  &   ---  &   --- \\
4   & 9.001 & 10.001 & 11.000 & 10.983 & 10.960 & 10.955 & 10.944 &   ---  &   ---  &   ---  &   --- \\
5   & 9.000 & 10.000 & 11.000 & ~9.996 & ~9.000 & ~7.998 &   ---  &   ---  &   ---  &   ---  &   --- \\
6   & 9.001 & 10.001 & 11.000 & 10.981 & 10.966 &   ---  &   ---  &   ---  &   ---  &   ---  &   --- \\
7   & 9.000 & 10.000 & 10.007 & ~8.998 &   ---  &   ---  &   ---  &   ---  &   ---  &   ---  &   --- \\
8   & 9.001 & 10.001 & 11.001 &   ---  &   ---  &   ---  &   ---  &   ---  &   ---  &   ---  &   --- \\
9   & 9.000 & 10.000 &   ---  &   ---  &   ---  &   ---  &   ---  &   ---  &   ---  &   ---  &   --- \\
\bottomrule
\end{tabular}
}
\end{table}
The behavior changes if just $f$ and not its cascadic interpolation is
used.
Table~\ref{Ex2:Jumps} shows for the methods $\Q{9}{k}$-$\vtd{9}{k}$,
$k=0,\dots,9$, the experimental convergence order of $\|(PP_s e)'\|_{L^2}$
after $s$ postprocessing steps based on jumps where at most $r+1-k= 10-k$ steps
have been carried out. The column $s=1$ shows, as predicted by our
theory, that the convergence order increases by $1$ for all methods.
The behavior using at least two postprocessing steps depends strongly on
the parameter $k$ of the variational time discretizations. For dG-like
methods (characterized by even $k$), an additional improvement by one order
is obtained independent of the number of postprocessing steps. The situation
is completely different for cGP-like method (corresponding to odd $k$). For
$k\equiv 3\mod 4$, the second postprocessing step does not lead to an
improvement of the convergence order compared to a single postprocessing step. If
$k\equiv 1\pmod 4$ then the second postprocessing step provides an
increased convergence order. However, for all cGP-like methods, the obtained
convergence rates start to decrease with increasing numbers of
postprocessing steps. This is in complete contrast to dG-like methods.
Calculations for the methods $\Q{10}{k}$-$\vtd{10}{k}$, $k=0,\dots,10$,
show for dG-like methods the same behavior as in the case $r=9$. However, 
the roles of $k\equiv 1\mod 4$ and $k\equiv 3\mod 4$ for cGP-like methods
are switched compared to the case $r=9$.

\begin{table}[htb!]
\caption{Example~\ref{Ex2}: Experimental orders of convergence for $\|(PP_s e)'\|_{L^2}$
using $\Q{9}{k}$-$\vtd{9}{k}$, $k=0,\dots,9$, and $s$ postprocessing steps based
on residuals, cf.~Theorem~\ref{th:postproc}.}
\label{Ex2:Residual}
\centerline{%
\begin{tabular}{c@{\hspace*{0.9em}}c@{\hspace*{0.9em}}c@{\hspace*{0.9em}}c@{\hspace*{0.9em}}
c@{\hspace*{0.9em}}c@{\hspace*{0.9em}}c@{\hspace*{0.9em}}c@{\hspace*{0.9em}}c@{\hspace*{0.9em}}
c@{\hspace*{0.9em}}c@{\hspace*{0.9em}}c}
\toprule
$k$ & $s=0$  & $s=1$  & $s=2$  & $s=3$  & $s=4$  & $s=5$  & $s=6$  & $s=7$  & $s=8$  & $s=9$  & $s=10$\\
\midrule
0   & 9.001 & 10.000 & 11.001 & 11.001 & 11.001 & 11.001 & 11.001 & 11.001 & 11.001 & 11.001 & 11.001\\
1   & 9.000 & 10.000 & 11.000 & 11.001 & 11.000 & 11.000 & 11.000 & 11.000 & 11.000 & 11.000 &   --- \\
2   & 9.001 & 10.001 & 11.001 & 11.002 & 11.001 & 11.001 & 11.001 & 11.001 & 11.001 &   ---  &   --- \\
3   & 9.000 & 10.000 & 11.000 & 11.001 & 11.000 & 11.000 & 11.000 & 11.000 &   ---  &   ---  &   --- \\
4   & 9.001 & 10.001 & 11.001 & 11.002 & 11.001 & 11.001 & 11.001 &   ---  &   ---  &   ---  &   --- \\
5   & 9.000 & 10.000 & 11.000 & 11.002 & 11.000 & 11.000 &   ---  &   ---  &   ---  &   ---  &   --- \\
6   & 9.001 & 10.001 & 11.001 & 11.003 & 11.002 &   ---  &   ---  &   ---  &   ---  &   ---  &   --- \\
7   & 9.000 & 10.000 & 11.000 & 11.002 &   ---  &   ---  &   ---  &   ---  &   ---  &   ---  &   --- \\
8   & 9.001 & 10.001 & 11.002 &   ---  &   ---  &   ---  &   ---  &   ---  &   ---  &   ---  &   --- \\
9   & 9.000 & 10.000 &   ---  &   ---  &   ---  &   ---  &   ---  &   ---  &   ---  &   ---  &   --- \\
\bottomrule
\end{tabular}
}
\end{table}
Our theory provides that postprocessing based on jumps and postprocessing
based on residuals are equivalent if a single postprocessing step is
applied. The situation changes if at least two postprocessing steps are
used. Table~\ref{Ex2:Residual} shows the experimental orders of convergence
of $\|(PP_s e)'\|_{L^2}$ after $s$ postprocessing steps based on residuals
for the methods $\Q{9}{k}$-$\vtd{9}{k}$, $k=0,\dots,9$, that are the same
ones as used for obtaining the results in Table~\ref{Ex2:Jumps}.
Independent of $k$, the application of at least two postprocessing steps
leads always to an improvement of the convergence order by two compared to
the results without postprocessing. Moreover, the orders of convergence do
not decrease even if more than two postprocessing steps based on residuals
are applied. The same behavior is observe for the methods
$\Q{10}{k}$-$\vtd{10}{k}$, $k=0,\dots,10$.

\section*{Acknowledgement}
This is a preprint of an article published in BIT Numerical Mathematics. The final
authenticated version is available online at: \url{https://doi.org/10.1007/s10543-021-00851-6}

  \begin{appendix}
  
\section{Direct proof for the alternative postprocessing}
\label{app:postAlt}
We now want to give a direct proof of Corollary~\ref{cor:postprocAlt}.
Similar to the proof of Theorem~\ref{th:postproc} we shall verify that
$\widetilde{U}$ satisfies all conditions for $\Q{r}{k}$-$\vtd{r+1}{k+2}$ where
$\Q{r}{k}$ is the quadrature rule associated to $\vtd{r}{k}$ which is
exact for polynomials of degree less than or equal to $2r-k$.

Since $\tilde{\vartheta}_n$ merely is a multiple of $\vartheta_n$, analogously
to \eqref{help:quadZeroPP} we have
\begin{gather}
\label{help:quadZeroPPtilde}
\Q{r}{k}\big[\tilde{\vartheta}_n' \varphi\big]
= -\delta_{0,k} (\tilde{\vartheta}_n \varphi)(t_{n-1}^+) \qquad \forall \varphi \in P_{r-k}(I_n,\RR).
\end{gather}
We will show that $\widetilde{U}$ satisfies all conditions for $\Q{r}{k}$-$\vtd{r+1}{k+2}$.
\begin{enumerate}[(a)]
\item \label{enu:123app}
Similar to the proof of Theorem~\ref{th:postproc} we prove the initial condition
$\widetilde{U}(t_{n-1}^+) = \widetilde{U}(t_{n-1}^-)$, the point conditions at
$t_n^-$ up to order $\left\lfloor\tfrac{k}{2}\right\rfloor -1$ and at $t_{n-1}^+$ up to order
$\left\lfloor\tfrac{k-1}{2}\right\rfloor -1$, as well as the variational condition.
Note that for $k=0$ the proof of the variational condition is even
easier since the initial condition $\widetilde{U}(t_{n-1}^+) = \widetilde{U}(t_{n-1}^-)$
is immediately clear from the alternative definition of the postprocessing. In detail, we have

\begin{enumerate}[({a}1)]
\item \label{enu:1app} \underline{Conditions at $t_n^-$ for
$0 \leq i \leq \left\lfloor\tfrac{k+2}{2}\right\rfloor-2 = \left\lfloor\tfrac{k}{2}\right\rfloor -1$:} \\
We obtain from the definitions of $\widetilde{U}$ and $U$
\begin{align*}
M \widetilde{U}^{(i+1)}(t_n^-)
= \frac{\d^i}{\d t^i} F\big(t,\widetilde{U}(t)\big) \Big|_{t=t_n^-}.
\end{align*}
\item \label{enu:2app} \underline{Conditions at $t_{n-1}^+$ for
$0 \leq i \leq \left\lfloor\tfrac{k+2-1}{2}\right\rfloor-2 = \left\lfloor\tfrac{k-1}{2}\right\rfloor -1$:} \\
We obtain from the definitions of $\widetilde{U}$ and $U$
\begin{align*}
M \widetilde{U}^{(i+1)}(t_{n-1}^+)
= \frac{\d^i}{\d t^i} F\big(t,\widetilde{U}(t)\big) \Big|_{t=t_{n-1}^+}\!.
\end{align*}
\item \label{enu:3app} \underline{Variational condition:} \\
Using the identity~\eqref{help:quadZeroPPtilde} and $\big[\widetilde{U}\big]_{n-1}=0$
we gain
\begin{align*}
\Q{r}{k}\Big[\big(M \widetilde{U}',\varphi\big)\Big]
= \Q{r}{k}\Big[\big(F(\cdot,\widetilde{U}(\cdot)),\varphi\big)\Big]
\qquad \qquad \forall \varphi \in P_{r-k}(I_n,\RR^d).
\end{align*}
\end{enumerate}
\end{enumerate}
It remains to verify the two remaining point conditions.
\begin{enumerate}[(a)]
\setcounter{enumi}{1}
\item \label{enu:4app}
\underline{Condition at $t_{n-1}^+$ for $i = \left\lfloor\tfrac{k+2-1}{2}\right\rfloor-1 = \left\lfloor\tfrac{k-1}{2}\right\rfloor$, if $k\geq 1$:} \\
First of all, let $n=1$. Then by the initial condition and~\eqref{enu:2app} we
have for $0 \leq i \leq \left\lfloor \frac{k-1}{2} \right\rfloor -1$
\begin{gather*}
\widetilde{U}(t_0^+) = u_0, \qquad
M \widetilde{U}^{(i+1)}(t_0^+)
= \frac{\d^i}{\d t^i} F\big(t,\widetilde{U}(t)\big) \Big|_{t=t_0^+}.
\end{gather*}
Recalling the definition of $u^{(i)}(t_0)$, we iteratively obtain
\begin{gather*}
\widetilde{U}^{(i)}(t_0^+) = u^{(i)}(t_0), \qquad 0 \leq i \leq \left\lfloor \tfrac{k-1}{2} \right\rfloor.
\end{gather*}
From this we conclude, using the definitions of $\widetilde{U}$, $\tilde{a}_n$, and $\tilde{\vartheta}_n$, that
\begin{align*}
M \widetilde{U}^{\left(\left\lfloor \frac{k-1}{2} \right\rfloor +1\right)}(t_0^+)
  & = M U^{\left(\left\lfloor \frac{k-1}{2} \right\rfloor +1\right)}(t_0^+)
  - M \tilde{a}_1 \underbrace{\tilde{\vartheta}_1^{\left(\left\lfloor \frac{k-1}{2} \right\rfloor +1\right)}(t_0^+)}_{=1}
= M u^{\left(\left\lfloor \frac{k-1}{2} \right\rfloor +1\right)}(t_0) \\
  & = \frac{\d^{\left\lfloor \frac{k-1}{2} \right\rfloor}}{\d t^{\left\lfloor \frac{k-1}{2} \right\rfloor}}
  F\big(t,u(t)\big) \big|_{t=t_0}
= \frac{\d^{\left\lfloor \frac{k-1}{2} \right\rfloor}}{\d t^{\left\lfloor \frac{k-1}{2} \right\rfloor}}
F\big(t,\widetilde{U}(t)\big) \big|_{t=t_0^+}.
\end{align*}

Now, let $n > 1$. We assume that $\widetilde{U}$ solves the $I_{n-1}$-problem
which will be finally shown when also the last condition is proved, see~\eqref{enu:5app}.
Then by construction
\begin{align*}
& M \widetilde{U}^{\left(\left\lfloor \frac{k-1}{2} \right\rfloor+1\right)}(t_{n-1}^+)
= M U^{\left(\left\lfloor \frac{k-1}{2} \right\rfloor+1\right)}(t_{n-1}^+)
- M \tilde{a}_n \underbrace{\tilde{\vartheta}_n^{\left(\left\lfloor \frac{k-1}{2} \right\rfloor+1\right)}(t_{n-1}^+)}_{=1}\\
& \quad = M \widetilde{U}^{\left(\left\lfloor \frac{k-1}{2} \right\rfloor+1\right)}(t_{n-1}^-)
= \frac{\d^{\left\lfloor \frac{k-1}{2} \right\rfloor}}{\d t^{\left\lfloor \frac{k-1}{2} \right\rfloor}}
F\big(t,\widetilde{U}(t)\big) \Big|_{t=t_{n-1}^-}
= \frac{\d^{\left\lfloor \frac{k-1}{2} \right\rfloor}}{\d t^{\left\lfloor \frac{k-1}{2} \right\rfloor}}
  F\big(t,\widetilde{U}(t)\big) \Big|_{t=t_{n-1}^+}
\end{align*}
where we also used that we already know that $\widetilde{U}^{(i)}(t_{n-1}^+) = \widetilde{U}^{(i)}(t_{n-1}^-)$
for $0 \leq i \leq \left\lfloor \frac{k-1}{2} \right\rfloor$.
\item \label{enu:5app}
\underline{Condition at $t_n^-$ for $i = \left\lfloor\tfrac{k+2}{2}\right\rfloor-1 = \left\lfloor\tfrac{k}{2}\right\rfloor$:} \\
We have to show that
\begin{gather*}
M \widetilde{U}^{\left(\left\lfloor\frac{k}{2}\right\rfloor+1\right)}(t_{n}^-)
= \frac{\d^{\left\lfloor \frac{k}{2} \right\rfloor}}{\d t^{\left\lfloor \frac{k}{2} \right\rfloor}} F\big(t,\widetilde{U}(t)\big) \Big|_{t=t_{n}^-}.
\end{gather*}
This can be done similar to the proof of Theorem~\ref{th:postproc}.

The variational condition for $\widetilde{U}$ is used with the special test functions
$\widetilde{\varphi}_j \in P_{r-k}(I_n,\RR^d)$, $j=1, \ldots, d$, that vanish at all inner
quadrature points, i.e.,
\begin{gather*}
\widetilde{\varphi}_j(t_{n,i}) = 0,\quad i = 1,\ldots,r-k,
\qquad \qquad \text{and satisfy} \qquad \qquad
\widetilde{\varphi}_j(t_n^-) = e_j.
\end{gather*}
By~\eqref{enu:3app} we have
\begin{gather*}
\Q{r}{k}\Big[\big(M \widetilde{U}',\widetilde{\varphi}_j\big)\Big]
= \Q{r}{k}\Big[\big(F(\cdot,\widetilde{U}(\cdot)),\widetilde{\varphi}_j\big)\Big],
\qquad j = 1,\ldots,d.
\end{gather*}
The special definition of $\widetilde{\varphi}_j$, the definition of the quadrature rule, and the already known
identities from~\eqref{enu:1app}, \eqref{enu:2app}, and~\eqref{enu:4app} (for $t_{n-1}^+$ and $k \geq 1$) yield after a short calculation
using Leibniz' rule for the $i$th derivative that
\begin{align*}
&& \Q{r}{k}\Big[\big(M \widetilde{U}',\widetilde{\varphi}_j\big)\Big]
& = \Q{r}{k}\Big[\big(F(\cdot,\widetilde{U}(\cdot)),\widetilde{\varphi}_j\big)\Big], \quad j=1,\ldots,d, \\
\Leftrightarrow
&& w_{\left\lfloor\frac{k}{2}\right\rfloor}^R
M \widetilde{U}^{\left(\left\lfloor\frac{k}{2}\right\rfloor+1\right)}(t_n^-) \cdot
\underbrace{\widetilde{\varphi}_j(t_n^-)}_{=e_j}
& =w_{\left\lfloor\frac{k}{2}\right\rfloor}^R
\frac{\d^{\left\lfloor \frac{k}{2} \right\rfloor}}{\d t^{\left\lfloor \frac{k}{2} \right\rfloor}} F\big(t,\widetilde{U}(t)\big)
\Big|_{t=t_n^-} \!\!\cdot \underbrace{\widetilde{\varphi}_j(t_n^-)}_{=e_j},
\quad j=1,\ldots,d, \\
\Leftrightarrow
&& M \widetilde{U}^{\left(\left\lfloor\frac{k}{2}\right\rfloor+1\right)}(t_{n}^-)
& =\frac{\d^{\left\lfloor \frac{k}{2} \right\rfloor}}{\d t^{\left\lfloor \frac{k}{2} \right\rfloor}} F\big(t,\widetilde{U}(t)\big)
\Big|_{t=t_{n}^-}
\end{align*}
where we exploited that $w_{\left\lfloor\frac{k}{2}\right\rfloor}^R \neq
0$. Note that~\eqref{enu:4app} is only needed for $t_{n-1}^+$ and already
completely proven for $t_0^+$. Hence, \eqref{enu:4app} and~\eqref{enu:5app}
can be iteratively shown for all $n$.
\end{enumerate}
Hence, $\widetilde{U}$ solves $\Q{r}{k}$-$\vtd{r+1}{k+2}$.

\end{appendix}

\bibliographystyle{plain}
\bibliography{vtd_odeBiblio}

\end{document}